\newcolumntype{C}{>{\centering\arraybackslash}X} %colonna di tipo X con testo centrato
\newtheorem*{rep@theorem}{\rep@title}
\newcommand{\newreptheorem}[2]{%
\newenvironment{rep#1}[1]{%
 \def\rep@title{#2 \ref{##1}}%
 \begin{rep@theorem}}%
 {\end{rep@theorem}}}
\newtheorem*{rep@cor}{\rep@title}
\newcommand{\newrepcor}[2]{%
\newenvironment{rep#1}[1]{%
 \def\rep@title{#2 \ref{##1}}%
 \begin{rep@cor}}%
 {\end{rep@cor}}}
\newtheorem*{rep@prop}{\rep@title}
\newcommand{\newrepprop}[2]{%
\newenvironment{rep#1}[1]{%
 \def\rep@title{#2 \ref{##1}}%
 \begin{rep@prop}}%
 {\end{rep@prop}}}
\newtheorem{theorem}{Theorem}[section]
\numberwithin{theorem}{section}
\newenvironment{manualtheorem}[1]{%
    \manualtheoreminner
}{\endmanualtheoreminner}
\newenvironment{manualcorollary}[1]{%
  \manualcorollaryinner
}{\endmanualcorollaryinner}
\newtheorem{lemma}[theorem]{Lemma}
\newtheorem{corollary}[theorem]{Corollary}
\newtheorem{proposition}[theorem]{Proposition}
\theoremstyle{definition}
\newtheorem*{definition*}{Definition}
\newtheorem{definition}[theorem]{Definition}
\theoremstyle{remark}
\newtheorem{remark}[theorem]{Remark}
\def\paragraph{\@startsection{paragraph}{4}%
  \z@\z@{-\fontdimen2\font}%
  {\normalfont\bfseries}}
\numberwithin{equation}{section}
\patchcmd{\subsection}{-.5em}{.5em}{}{}
\renewcommand\section{\@startsection{section}{1}%
  \z@{.7\linespacing\@plus\linespacing}{.5\linespacing}%
  {\normalfont\scshape\centering}}
\renewcommand\subsection{\@startsection{subsection}{2}%
  \z@{-.5\linespacing\@plus-.7\linespacing}{.5\linespacing}%
  {\bfseries}}
\renewcommand\subsubsection{\@startsection{subsubsection}{3}%
  \z@{-.5\linespacing\@plus-.7\linespacing}{.5\linespacing}%
  {\itshape}}
\def\l@paragraph{\@tocline{4}{0pt}{1pc}{7pc}{}}
\newcommand{\C}{\mathbb{C}}
\newcommand{\R}{\mathbb{R}}
\newcommand{\Hyp}{\mathbb{H}^2}
\newcommand{\Teich}{\mathcal{T}}
\newcommand{\Lsl}{\mathfrak{sl}}
\newcommand{\vl}{|}
\newcommand{\Ker}{\mathrm{Ker}}
\newcommand{\PSL}{\mathbb{P}\mathrm{SL}}
\newcommand{\Isom}{\mathrm{Isom}}
\newcommand{\Id}{\mathrm{Id}}
\newcommand{\Ree}{\mathcal{R}e}
\renewcommand{\i}{\mathbf{I}}
\newcommand{\g}{\mathbf{g}}
\newcommand{\deft}{\mathcal{B}_0(T^2)}
\newcommand{\defp}{\mathcal{B}(T^2)}
\newcommand{\defg}{\mathcal{B}(\Sigma)}
\newcommand{\defgp}{\mathcal{B}_0(\Sigma)}
\newcommand{\RP}{\mathbb{R}\mathbb{P}^2}
\newcommand{\Sg}{\Sigma}
\newcommand{\ome}{\boldsymbol{\omega}}
\newcommand{\cubic}{Q^3(\Teich(T^2))}
\newcommand{\cubiccomplement}{Q^3_0(\Teich(T^2))}
\newcommand{\almost}{\mathcal{J}(\R^2)}
\newcommand{\dx}{\mathrm{d}x}
\newcommand{\dy}{\mathrm{d}y}
\newcommand{\du}{\mathrm{d}u}
\newcommand{\devu}{\mathrm{d}v}
\newcommand{\scalg}{\text{scal}}
\newcommand{\Xuno}{\mathbb{X}_{H_1}}
\newcommand{\Xdue}{\mathbb{X}_{H_2}}
\newcommand{\partialx}{\frac{\partial}{\partial x}}
\newcommand{\partialy}{\frac{\partial}{\partial y}}
\newcommand{\partialu}{\frac{\partial}{\partial u}}
\newcommand{\partialv}{\frac{\partial}{\partial v}}
\newcommand{\partialtheta}{\frac{\partial}{\partial\theta}}
\newcommand{\partials}{\frac{\partial}{\partial s}}
\newcommand{\partialHuno}{\frac{\partial}{\partial H_1}}
\newcommand{\partialHdue}{\frac{\partial}{\partial H_2}}
\newcommand{\partialbuno}{\frac{\partial}{\partial b_1}}
\newcommand{\partialbdue}{\frac{\partial}{\partial b_2}}
\newcommand{\partialHj}{\frac{\partial}{\partial H_j}}
\newcommand{\partialgi}{\frac{\partial}{\partial g_i}}
\newcommand{\partialbj}{\frac{\partial}{\partial b_j}}
\DeclarePairedDelimiterX{\scal}[2]{\langle}{\rangle}{#1 \mid #2}
\DeclarePairedDelimiterX{\scall}[2]{\langle}{\rangle}{#1, #2}
\DeclareMathOperator{\Imm}{Im}
\DeclareMathOperator{\End}{End}
\DeclareMathOperator{\Aut}{Aut}
\DeclareMathOperator{\SL}{\mathrm{SL}}
\DeclareMathOperator{\Proj}{\mathrm{Proj}}
\DeclareMathOperator{\Diff}{Diff}
\DeclareMathOperator{\Ad}{Ad}
\begin{document}

\setcounter{secnumdepth}{3}
\setcounter{tocdepth}{2}

\title[Global Darboux coordinates for complete Lagrangian fibrations%Symplectic and metric properties of properly convex \texorpdfstring{$\RP$}{RP2}-structures on \texorpdfstring{$T^2$}{T2}
]{Global Darboux coordinates for complete Lagrangian fibrations and an application to the deformation space of \texorpdfstring{$\RP$}{RP2}-structures in genus one%Symplectic and metric properties of properly convex \texorpdfstring{$\RP$}{RP2}-structures on the torus
}

\author[Nicholas Rungi]{Nicholas Rungi}
\address{NR: Scuola Internazionale Superiore di Studi Avanzati (SISSA), Trieste (TS), Italy.} \email{nrungi@sissa.it} 

\author[Andrea Tamburelli]{Andrea Tamburelli}
\address{AT: Dipartimento di Matematica, Universit\'a di Pisa, Italy.} \email{andrea.tamburelli@libero.it}

\date{\today}

\begin{abstract}
    In this paper we study a broad class of complete Hamiltonian integrable systems, namely the ones whose associated Lagrangian fibration is complete and has non compact fibres. By studying the associated complete Lagrangian fibration, we show that, under suitable assumptions, the integrals of motion can be taken as action coordinates for the Hamiltonian system. As an application we find global Darboux coordinates for a new family of symplectic forms $\ome_f$, parametrized by smooth functions $f:[0,+\infty)\to(-\infty,0]$, defined on the deformation space of properly convex $\RP$-structures on the torus. Such a symplectic form is part of a family of pseudo-K\"ahler metrics $(\g_f,\i,\ome_f)$ defined on $\deft$ and introduced by the authors. In the last part of the paper, by choosing $f(t)=-kt, k>0$ we deduce the expression for an arbitrary isometry of the space.
\end{abstract}

\maketitle

\tableofcontents

\section{Introduction}
An Hamiltonian system, from a geometric point of view, is a symplectic manifold $(M,\omega)$ of dimension $2n$ endowed with a smooth function $H:M\to\R$. It is called \emph{completely integrable} if there exist $n$ independent smooth functions $f_1:=H,f_2,\dots,f_n$ on the manifold $M$ with vanishing Poisson bracket $\{f_i,f_j\}=0, \forall i,j$ (\cite{abraham2008foundations}). %In the 19th century, Liouville was the first to find local solutions to a system of differential equations using the \emph{quadrature} method (\cite{liouville1855note}). Broadly speaking this consists in explicitly solving given equations and integrating given functions in one variable. In this way he obtained the existence of new $n$ functions $(\theta_1,\dots,\theta_n)$ such that the collection $\{f_1,\theta_1,\dots,f_n,\theta_n\}$ formed a local Darboux coordinates system.  \\ \\
It was proved by Arnlod (see \cite{arnold1968ergodic} and \cite{markus1974generic}), that smooth and compact components of the fibers $\{f_1=\text{constant},\dots,f_n=\text{constant}\}$ are diffeomorphic to $n$-dimensional tori and that one can always find, in a neighborhood of such fibers, \emph{angle-action coordinates}. These are coordinates $\{\theta_1,\psi_1,\dots,\theta_n,\psi_n\}$ which form a local Darboux frame for $\omega$ and the motion in the variables $\theta_1,\dots,\theta_n$ is \emph{quasi-periodic}. In particular, the integral of motions $f_1,\dots,f_n$ depends only on $\psi_1,\dots,\psi_n$ but, in general, they are different. \\ \\ 
Even though integrable Hamiltonian systems arise from a purely physical context, given the close connection with symplectic geometry, we are interested in understanding whether geometric spaces (e.g. deformation or moduli spaces) admit such a structure. Nowadays, the classic example is given by the so called \emph{Hitchin system} (\cite{hitchin1987stable}), that is the cotangent bundle of the moduli space of stable $G$-bundles on a compact algebraic curve, for $G$ a reductive Lie group. A much more recent result (\cite{choi2019symplectic}) showed that the Hitchin component of the $\SL(3,\R)$-character variety has the structure of an integrable Hamiltonian system with respect to the Goldman symplectic form (\cite{goldman1984symplectic}). In particular, it was given a geometric description of the construction in terms of convex $\RP$-structures.
\\ \\
On the other hand, a Lagrangian fibration $\pi:=(f_1,\dots,f_n):(M,\omega)\to\R^n$ is associated with every completely integrable Hamiltonian system $(M,\omega,f_1,\dots,f_n)$ (\cite{da2008lectures}). Moreover, in the case in which the Hamiltonian vector fields $\mathbb X_{f_1},\dots,\mathbb X_{f_n}$ are complete, the regular and connected fibers of $\pi$ are diffeomorphic to $\R^{k}\times T^{n-k}$, for $1\le k\le n$ (\cite{arnol2013mathematical}). Studying the topological and symplectic properties of this fibration, criteria have been found to be able to pick the integrals of motion $(f_1,\dots,f_n)$ as action variables, trying to give them a geometric meaning (see \cite{duistermaat1980global} for $k=0$ and \cite{choi2019symplectic} for $k=n$). \\ \\ 
In this paper, we study some properties of a new family of pseudo-K\"ahler metrics, introduced by the authors (\cite{rungi2021pseudo}), defined on the $\SL(3,\R)$-Hitchin component of the torus. Such family of metrics can be restricted to the deformation space of properly convex $\RP$-structures on the torus, which embeds as an open (but not closed) connected smooth manifold in the $\SL(3,\R)$-character variety. First, we show that the aforementioned deformation space has the structure of a complete Hamiltonian integrable system. Using the theory of complete Lagrangian fibrations (see Definition \ref{def:completefibration}), we show that two Hamilotonian functions associated with two group actions on the system can be taken as action variables. As a general result, we show that on a broad class of Hamiltonian systems the associated Lagrangian fibration is complete (see Proposition \ref{prop:completefibration}). Finally, we prove that, under suitable assumptions, the connected component of the identity of the isometry group is isomorphic to $\PSL(2,\R)\times S^1$ and, as a consequence, we deduce the expression of an arbitrary isometry of the space. %\\ \\ %We briefly recall the definition of a pseudo-K\"ahler metric on a smooth manifold and we state our main theorems.
\subsection{Symplectic properties}\label{sec:1.3}
Let $(M,\omega,H)$ be a completely integrable Hamiltonian system with integral of motions $f_1:=H,f_2,\dots,f_n$. By the well-know Arnold-Liouville theorem (see Theorem \ref{teo:arnlod}), the existence of the so-called \emph{action-angle} coordinates gives rise to a global Darboux frame $\{\theta_1,\psi_1,\dots,\theta_n,\psi_n\}$ for $\omega$ and a Lagrangian fibration $$\pi:=(f_1,\dots,f_n):(M,\omega)\to B\subset\R^n \ .$$Unfortunately, the action variables $\psi_1,\dots,\psi_n$ are not necessarily equal to the integral of motions $f_1,\dots,f_n$ and this does not allow, in general, to give a geometric description of the Darboux frame. \\ \\ A first general strategy to overcome this problem was presented in \cite{duistermaat1980global}, in the case in which the Lagrangian fibration $\pi:(M,\omega)\to B\subset\R^n$ associated with a complete Hamiltonian integrable system has fiber diffeomorphic to an $n$-dimensional torus. The crucial point is the existence of a global Lagrangian section $\sigma:B\to (M,\omega)$, which is guaranteed as long as $H^2(B,\R)\cong\{0\}$. \\ \\ Recently, Choi-Jung-Kim have presented an adapted version of this result, in the case where each fiber of $\pi:(M,\omega)\to B$ is diffeomorphic to $\R^n$ (see Theorem 3.4.5 in \cite{choi2019symplectic}). Their main application was the existence of a global Darboux frame for the Goldman symplectic form $\ome_G$ defined on the $\SL(3,\R)$-Hitchin component of a surface of genus at least two. \\ \\ In this paper, we are interested in studying completely integrable Hamiltonian systems whose associated Lagrangian fibration has non compact fibres. In order to get a similar result as the one in the compact or simply-connected case we need to focus on the so-called \emph{complete Lagrangian fibrations} (see Definition \ref{def:completefibration}). Such fibrations arise as the ones associated with a broad class of Hamiltonian systems, namely the ones with complete Hamiltonian vector fields $\mathbb X_{f_1},\dots,\mathbb X_{f_n}$ (see Proposition \ref{prop:completefibration}). Our main result shows that, in the non compact case, the integral of motions $f_1,\dots,f_n$ can be taken as action coordinates for $\omega$, whenever the base space $B$ is a contractible open subset of $\R^n$ and the associated Lagrangian fibration $\pi:(M,\omega)\to B$ is complete. Our strategy is adapted to that of the compact case (\cite{duistermaat1980global}), which is based on the existence of global Lagrangian sections and it is proved using the theory of sheaf cohomology (see Corollary \ref{cor:lagrangianglobalsection}). \\ \\
As an application, we study the symplectic geometry of $(\deft,\ome_f)$, which is a smooth manifold diffeomorphic to $\Hyp\times\C^*$ (Corollary \ref{cor:cubictorusandproperlyconvex}). There exists an action of $S^1$ and $\SL(2,\R)$ on $\deft$ which turns out to be Hamiltonian with respect to $\ome_f$ (Theorem \ref{teo:rungitamburelli2} and Theorem \ref{teo:rungitamburelli3}). In particular, by taking the action of the subgroup $\R^*<\SL(2,\R)$ generated by the diagonal matrices, it is possible to explicitly compute the Hamiltonian function $H_2$ with respect to this restricted action. Together with the Hamiltonian function $H_1$ of the circular action we get the existence of two commuting Hamiltonian vector fields $\Xuno,\Xdue$ on $\deft$. In other terms, the space $(\deft,\ome_f)$ has the structure of a complete Hamiltonian integrable system (see Proposition \ref{prop:deftintegrablesystem}). The main issue is that each fiber of the associated Lagrangian fibration $H:=(H_1,H_2):\big(\deft,\ome_f\big)\to B\subset\R^2$ is diffeomorphic to $\R\times S^1$ (see Remark \ref{rem:fiberofH}). However, since the base space $B$ is contractible and the Hamiltonian vector fields $\Xuno,\Xdue$ are complete, we can apply the theory developed in Section \ref{sec:completelagrangianfibrations} to get the following:
 \begin{manualtheorem}A \label{thmB}
The collection $\{\theta, H_1, s, H_2\}$ is a global Darboux frame for $\ome_f$, where $(s,\theta)\in\R\times S^1\cong H^{-1}(b)$ for each $b\in B$.
\end{manualtheorem}

\subsection{Pseudo-K\"ahler geometry}
%The aim of this short note is to study some differential geometric properties of a new family of pseudo-K\"ahler metrics, introduced by the authors (\cite{rungi2021pseudo}), on the deformation space $\deft$ of properly convex $\RP$-structures on the torus. \\ \\
A \emph{pseudo-Riemannian} metric $\g$ on a smooth $n$-manifold $M$ is an everywhere non-degenerate, smooth, symmetric $(0,2)$-tensor. The $\emph{index}$ of $\g$ is the maximal rank $k$ of the smooth distribution where it is negative-definite. For instance, if $k=0$ then $\g$ is a Riemannian metric. Now let $\mathbf{I}$ be a complex structure on $M$, then $(\g,\mathbf{I})$ is a \emph{pseudo-Hermitian structure} if \begin{equation*}
    \g(\mathbf{I}X,\mathbf{I}Y)=\g(X,Y), \qquad\forall X,Y\in T_pM, p\in M \ .
\end{equation*}Notice that, due to this last condition, the index of $\g$ in this case is always even $k=2s$, where $s$ is called the \emph{complex index} and it satisfies $1\le s\le m=\dim_\C M$. If $k=\dim_\C M$ we say the pseudo-metric has \emph{neutral signature}. The \emph{fundamental 2-form} $\ome$ of a pseudo-Hermitian manifold $(M,\g,\mathbf{I})$ is defined by: \begin{equation*}
    \ome(X,Y):=\g(X,\mathbf{I}Y), \qquad\forall X,Y\in T_pM, p\in M \ .
\end{equation*}
A pseudo-Hermitian manifold $(M,\g,\mathbf{I},\ome)$ is called \emph{pseudo-K\"ahler} if the fundamental $2$-form is closed, namely if $\mathrm{d}\ome=0$. In this case the corresponding metric is called \emph{pseudo-K\"ahler}.

\subsection{Metric properties}
The new family of pseudo-K\"ahler metrics $(\g_f,\i,\ome_f)$ is defined on the whole $\Hyp\times\C$ (see Section \ref{sec:3.1}) and it is parametrized by smooth functions $f:[0,+\infty)\to(-\infty,0]$ satisfying the following properties:$$
    (i) \ f(0)=0, \qquad (ii) \ f'(t)<0, \ \forall t\ge 0, \qquad (iii) \ \displaystyle\lim_{t\to+\infty}f(t)=-\infty \ .$$
In particular, they restrict to the hyperbolic K\"ahler metric on $\Hyp\times\{0\}$ (see Theorem \ref{teo:rungitamburelli}). \\ \\ Due to the complicated dependence of the metric and the symplectic form in terms of the function $f$, it is difficult to study basic properties of it. Nevertheless, as will be seen in Section \ref{sec:3}, exploiting the fact that a pseudo-K\"ahler metric with neutral signature enjoys the same symmetries as the positive-definite case, it is possible to greatly simplify the computation of the Ricci tensor and of the scalar curvature, even in the case of an arbitrary function $f$. Unfortunately, the final expression of these two quantities is slightly complicated. On the other hand, by choosing the functions $f(t)=-kt$ with $k>0$, we will obtain an estimate on the scalar curvature that will allow us to compute the connected component of the identity in the isometry group of the metric for this special case.\begin{manualtheorem}B \label{thmA}
If $f(t)=-kt$, with $k>0$, then any isometry $h:\big(\Hyp\times\C,\g_f\big)\to\big(\Hyp\times\C,\g_f\big)$ isotopic to the identity, can be written as $h=P\circ e^{i\theta}$, for some $P\in\PSL(2,\R)$ and $\theta\in\R$. 
\end{manualtheorem}
The Lie group $\Isom\big(\Hyp\times\C, \g_f\big)$ has four connected components. One is given by all the isometries preserving the orientations of both factors, one is given by all the isometries reversing the orientations of both factors (but preserving the orientation on the whole space) and the remaining two are formed by isometries reversing orientation on either $\Hyp$ or $\C$. In particular, the connected component of the identity is identified with those isometries preserving the orientations on both factors.
\begin{manualcorollary}C If $f(t)=-kt$, with $k>0$, then any isometry $h:\big(\Hyp\times\C,\g_f\big)\to\big(\Hyp\times\C,\g_f\big)$ not isotopic to the identity, can be written as $$h=P\circ e^{i\theta}\circ h_1,\quad h=P\circ e^{i\theta}\circ h_2,\quad h=P\circ e^{i\theta}\circ h_1\circ h_2\qquad P\in\PSL(2,\R),e^{i\theta}\in S^1$$ where $h_1(z,w):=(-\bar z, w)$ and $h_2(z,w):=(z,\bar w)$, according to which connected component it belongs.
\end{manualcorollary}
\section*{Acknowledgement}
Part of this work was done while the first author was visiting the second author at the University of Pisa. The second author acknowledges support from the U.S. National Science Foundation under grant DMS-2005501 and from the University of Pisa.
\section{Preliminaries}
\subsection{Background on properly convex \texorpdfstring{$\RP$}{RP2}-structures}

Here we give the standard definitions of \emph{convex} $\RP$-\emph{structures} over a smooth surface $S$ and we introduce the deformation space of such structures. For a more complete and detailed exposition see \cite{goldman1990convex}, \cite{choi1997classification} and \cite{goldman2018geometric}.\newline An $\RP$-\emph{structure} on a smooth connected surface $S$ is a maximal $\RP$-atlas, i.e. an atlas in which the local charts take value in the real projective plane and the transition functions are induced by projective transformations. Once a maximal $\RP$-atlas is given, we say that $S$ has an $\RP$-\emph{structure}. %By unravelling the definition it is easy to see that if $S$ is an $\RP$-surface and $p:\widetilde{S}\to S$ is its universal cover, then $\widetilde{S}$ inherits an $\RP$-structure from the one of $S$.
A domain (open and connected) $\Omega\subset\RP$ is said to be \emph{convex} if there exists a projective line $l$ disjoint from $\Omega$ such that $\Omega\subset\RP\setminus l\cong\mathbb{A}^2$ is convex in the usual sense. %By definition $\R^2$ is convex but $\RP$ is not. %It is not difficult to show that this notion of convexity in the real projective plane does not add new convex sets with respect to those usual one in affine spaces, see \cite[\S 1]{andersson2004complex} .
\begin{definition}
An $\RP$-surface $S$ is \emph{convex} if it is projectively isomorphic to a quotient $\Omega/_{\Gamma}$, where $\Omega\subset\RP$ is a convex domain and $\Gamma\subset\Proj(\Omega)\subset\SL(3,\R)$ is a discrete group of projective transformations preserving $\Omega$ acting freely and properly discontinuously on $\Omega$. The surface $S$ is $\emph{properly convex}$ if $\Omega$ is bounded in some affine space.
\end{definition}

%There is a well-known equivalent way of defining a convex $\RP$-surface in terms of the existence of a pair of maps with special properties. This is the following:
%\begin{theorem}[Development Theorem]Let $S$ be an $\RP$-surface, then the following are equivalent:\begin{itemize} \item[(1)] $S$ is convex \item[(2)] There exists a pair $(\dev, h)$, where $\dev:\widetilde{S}\to\RP$ is a diffeomorphism onto a convex domain in $\RP$ called the developing map and $h:\pi_1(S)\to\SL(3,\R)$ is a group homomorphism called the holonomy homomorphism, such that the following diagram commutes:\begin{equation}\begin{tikzcd}\widetilde{S} \arrow[r, "\mathrm{dev}"] \arrow[d, "\gamma"'] & \mathbb{R}\mathbb{P}^2 \arrow[d, "h(\gamma)"] \\\widetilde{S} \arrow[r, "\mathrm{dev}"]                         & \mathbb{R}\mathbb{P}^2         \end{tikzcd}\end{equation}Moreover, if $(\widetilde{dev},\widetilde{h})$ is another such pair, then $\exists g\in\SL(3,\R)$ such that:\begin{equation*}   \widetilde{\dev}=g\circ\dev, \qquad \widetilde{h}(\gamma)=g\circ h(\gamma)\circ g^{-1}, \ \forall\gamma\in\pi_1(S) \ . \end{equation*}\end{itemize}\end{theorem}
If $S$ is convex, then its universal cover $\widetilde{S}$ can be identified with a convex domain $\Omega\subset\RP$ via the \emph{developing map} and the discrete subgroup $\Gamma$ can be identified with $\pi_1(S)$ via the \emph{holonomy homomorphism}. From this point on we will focus only on the case in which the surface is closed, orientable and of genus $g$, hence it will be denoted with $\Sg$.\begin{definition}\label{defproperlyconvexstructure}A \emph{(properly) convex} $\RP$-\emph{structure} on $\Sg$ is a pair $(\phi,M)$, where $\phi:\Sg\to M$ is a diffeomorphism (called the \emph{marking}) and $M\cong \Omega/_{\Gamma}$ is a (properly) convex $\RP$-surface. 
\end{definition}One can define an equivalence relation on such pairs, namely $(\phi_1,M_1)\sim (\phi_2,M_2)$ if and only if there exists a projective isomorphism $\Psi:M_1\to M_2$ such that the new marking $\Psi\circ \phi_1$ is isotopic to $\phi_2$. The \emph{deformation space of (properly) convex $\RP$-structures} is defined as:
\begin{align}
    &\defg:=\{(\phi,M) \ \text{convex} \ \RP-\text{structure on} \ \Sg\}_{/_\sim} \\ &\defgp:=\{(\phi,M) \ \text{properly convex} \ \RP-\text{structure on} \ \Sg\}_{/_\sim} \ .
\end{align}
%\begin{remark}The reader familiar with Teichm\"uller theory may have noticed the remarkable similarity of the construction of this space with $\Teich(\Sg)$, when $g\ge 2$. In fact, as stated before, if $\Sg\cong\Hyp/_{\Gamma}$ with $\Gamma\subset\PSL(2,\R)$, using the Klein-Beltrami model for hyperbolic geometry and noting that $\PSL(2,\R)\cong\Iso^+(\Hyp)$ and $\PSL(3,\R)\cong\Aut(\RP)$, we can identify $\Gamma$ with a discrete subgroup of $\PSL(3,\R)$, contained in $\Proj(\Omega)$, acting freely and properly discontinuously on $\Omega\equiv\Hyp$. Thus, giving an inclusion $\Teich(\Sg)\subset\defg$ at least as sets.\footnote{We have implicitly used the existence of a unique (up to conjugation) irreducible representation $\imath:\PSL(2,\R)\hookrightarrow\PSL(3,\R)$.}\end{remark}
%The behavior of this space depends highly on the genus of the surface and, as one can imagine, there are notable differences between the flat case (genus one) and the hyperbolic one ($g\ge 2$). The first result in this direction was given by Kuiper and Benzecr\'i (\cite{kuiper1953convex},\cite{benzecri1960varietes}) which we now recall.
In the case of a hyperbolic Riemann surface, it is well-known that $\defg\equiv\defgp$ (see \cite{kuiper1953convex} and \cite{benzecri1960varietes}).
%\begin{proposition}If $\Sg$ is a convex $\RP$-surface with $g\ge 2$, then it must be properly convex. Moreover, the boundary $\partial\Omega$ is always strictly convex and $C^1$, and it must be either and ellipse or a Jordan curve which is nowhere $C^2$. In particular, there is an identification $\defg\equiv\defgp$.\end{proposition}
In the case of the torus this is no longer true, for instance there are many convex $\RP$-structures which are not properly convex: affine and Euclidean ones. They can not be properly convex since the developing map identifies the universal cover of $T^2$ with a copy of $\mathbb{R}^2$ inside $\RP$, which is convex but not bounded, see \cite[\S 8.5]{goldman2018geometric}. In particular, there is a strict inclusion $\deft\subsetneq\defp$ and the natural space to study in this contest is $\deft$, given that $\defp$ is not even a Hausdorff topological space. 

%On the other hand, much can be said about the properly convex domain $\Omega\subset\RP$ which covers the torus.

%\begin{proposition}
%Let $\Omega/_{\Gamma}$ be a properly convex $\RP$-structure on $T^2$, then $\Omega$ is projectively equivalent to a triangle in $\R^3$ with vertices $\{(1,0,0); (0,1,0); (0,0,1)\}$.
%\end{proposition}
%A detailed proof of this result can be found in \cite{rungi2021pseudo}, though it has already been proven differently using the classification of flat hyperbolic affine spheres in $\R^3$ up to unimodular affine transformations (\cite{magid1990flat}).

\subsection{A natural isomorphism}
Here we briefly introduce the theory of hyperbolic affine spheres in $\R^3$, recalling also the most important result regarding their global geometry as hypersurfaces. Then, we will focus on the close relationship between these geometric objects and cubic holomorphic differentials defined on the surface. All this will lead us to a parameterization of properly convex $\RP$-structures in terms of complex structures and holomorphic cubic differentials on $T^2$. \\ \\  Let $f\!: M \to \R^{3}$ be an immersion and $\xi\!:M\to\R^{3}$ be a transverse vector field to $f(M)$. This means that for all $x\in M$ we have a splitting: $$T_{f(x)}\R^{3} = f_* T_xM + \R\xi_x \ . $$ 
Let $D$ be the standard flat connection on $\R^3$ and suppose the structure equations of the immersed surface are given by:
\begin{equation}\begin{aligned}\label{structurequations}
D_XY &= \nabla_XY + h(X,Y)\xi \\
D_X\xi &= -S(X)
\end{aligned}\end{equation}
where $\nabla$ is a torsion-free connection on $M$ called the \emph{Blaschke connection}, $h$ is a metric on $M$ called the \emph{Blaschke metric} and $S$ is and endomorphism of $TM$ called the \emph{affine shape operator}.
\begin{definition}
Let $M$ be an immersed hypersurface in $\R^{3}$ with structure equations given by (\ref{structurequations}). Then $M$ is called a \emph{hyperbolic affine sphere} if $S=-\Id_{TM}$. 
\end{definition}

%It is not hard to see that the function $\lambda$ has to be constant and hence, by rescaling, we can assume $\lambda\in\{-1,0,1\}$. We say that an affine sphere is \emph{hyperbolic} if $\lambda=-1$.

%The global geometry of hyperbolic affine spheres is quite complicated. Their properties were conjectured by Calabi (\cite{calabi1972complete}) and proved by Cheng-Yau (\cite{cheng1977regularity}, \cite{cheng1986complete}) and Calabi-Nirenberg (with clarifications by Gigena (\cite{gigena1981conjecture}) and Li (\cite{li1990calabi}, \cite{li1992calabi})). The most important result, which holds in arbitrary dimension, is the following:
\begin{theorem}[\cite{cheng1977regularity},\cite{cheng1986complete},\cite{gigena1981conjecture},\cite{li1990calabi}]\label{thmchengyau}
Given a convex, bounded domain $\Omega\subset\R^3$, there is a unique properly embedded hyperbolic affine sphere $M\subset\R^{3}$ with center at $0$ asymptotic to the boundary of the cone $\mathcal{C}(\Omega):=\{(tx,t) \ | \ x\in\Omega, t>0\}\subset\R^{3}$. For any immersed hyperbolic affine sphere $f: M\rightarrow\R^{3}$, properness of the immersion is equivalent to the completeness of the Blaschke metric, and any such $M$ is a properly embedded hypersurface asymptotic to the boundary of the cone given by the convex hull of $M$ and its center.
\end{theorem}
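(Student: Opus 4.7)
The plan is to reduce the entire statement to the study of a single degenerate Monge--Amp\`ere equation on $\Omega$, and then to derive both the existence/uniqueness part and the properness/completeness equivalence from properties of its solution. Writing the sought affine sphere with center $0$ as the radial graph over $\Omega$ of a convex function $u\colon \Omega\to(-\infty,0)$ with $u|_{\partial\Omega}=0$, the condition $S=-\mathrm{Id}_{TM}$ translates into
\begin{equation*}
\det\bigl(u_{ij}\bigr)=\left(-\tfrac{1}{u}\right)^{4},\qquad u<0\text{ in }\Omega,\quad u=0\text{ on }\partial\Omega.
\end{equation*}
The first task would be to check this equivalence carefully, verifying that the Blaschke connection, Blaschke metric, and affine normal of the radial graph are exactly those produced by the Legendre/affine-invariant normalization used in the definition, so that $S=-\mathrm{Id}_{TM}$ is indeed the condition above.

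Next I would prove existence and uniqueness of a convex solution $u$ to this boundary-value problem. Uniqueness follows from the comparison principle for Monge--Amp\`ere, since the nonlinearity $(-1/u)^4$ is monotone in $u$. For existence I would follow the Cheng--Yau strategy: first solve, for each $\varepsilon>0$, the non-degenerate problem $\det(u_{ij})=(-1/(u-\varepsilon))^{4}$ with $u=0$ on $\partial\Omega$ (using the method of continuity together with a Dirichlet solvability result for strictly convex domains, after approximating $\Omega$ by smooth strictly convex $\Omega_k\uparrow\Omega$), establish uniform interior $C^{2,\alpha}$ estimates away from $\partial\Omega$ using Pogorelov-type estimates adapted to this equation, and pass to the limit. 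The pointwise boundary behavior $u\to 0$ is guaranteed by constructing a convex barrier from above using a quadratic supporting paraboloid and, from below, a radial profile obtained from the one-dimensional model.

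Having produced $u$, I would deduce the geometric statements. The graph $M=\{(x,u(x))\mid x\in\Omega\}$ is asymptotic to $\partial\mathcal{C}(\Omega)$ because $u(tx)/t\to 0$ near $\partial\Omega$ by the barrier estimates, which places $M$ inside $\mathcal{C}(\Omega)$ and forces its ends to approach $\partial\mathcal{C}(\Omega)$. For the second half of the theorem, I would first show the easy direction: if $f\colon M\to\R^3$ is a properly immersed hyperbolic affine sphere, then its convex hull is a cone $\mathcal{C}(\Omega)$ over some convex bounded $\Omega$, and by the uniqueness above $M$ must coincide with the affine sphere of $\Omega$ and is therefore a properly embedded hypersurface. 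For the equivalence of properness and completeness of the Blaschke metric $h$, the nontrivial direction is that properness implies completeness: I would compare $h$ with the Hilbert metric of $\Omega$, which is complete, using the identity that relates, for the canonical affine sphere over $\Omega$, the Blaschke metric to the second derivative of $-\log(-u)$ and hence to the intrinsic projective metric of $\Omega$; the reverse direction follows because a Cauchy sequence in $(M,h)$ with no limit would produce a boundary point of the immersion, contradicting properness.

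The main obstacle is the PDE step: constructing and regularizing solutions of a Monge--Amp\`ere equation that is doubly degenerate at the boundary (both the right-hand side blowing up and $\det(u_{ij})$ losing ellipticity there). All the a priori estimates must be interior and independent of how close one approaches $\partial\Omega$ or of the regularity of $\partial\Omega$ itself, which is precisely the heart of Cheng--Yau's work and cannot be shortcut; once these estimates are in place, the geometric conclusions follow rather directly from comparison, convexity, and the explicit link between $u$ and $h$.
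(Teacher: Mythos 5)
This statement is quoted by the paper from the literature (Cheng--Yau, Gigena, Li) and is not proved there, so there is no internal proof to compare against; your sketch is, in outline, the route those references actually take: reduce to the degenerate Monge--Amp\`ere problem $\det(u_{ij})=(-1/u)^{4}$, $u<0$ in $\Omega$, $u=0$ on $\partial\Omega$, solve it by approximation with interior Pogorelov-type estimates, and read off the geometry. Two points, however, are genuinely off. First, the affine sphere is the \emph{radial} graph $M=\bigl\{-\tfrac{1}{u(x)}(x,1)\ \big|\ x\in\Omega\bigr\}$, not the ordinary graph $\{(x,u(x))\}$ that you use when arguing asymptotics: the ordinary graph is a bounded set (since $u$ is bounded and vanishes on $\partial\Omega$) and cannot be asymptotic to the boundary of an unbounded cone. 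The correct asymptotic statement is that $-1/u\to+\infty$ as $x\to\partial\Omega$, which pushes the radial graph out to $\partial\mathcal{C}(\Omega)$; your barrier estimates are the right tool, but they must be applied to this parametrization.

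Second, and more seriously, you have the difficulty of the properness/completeness equivalence inverted. The direction ``properness $\Rightarrow$ completeness of the Blaschke metric'' is the one Cheng--Yau handle via their gradient estimate; the genuinely hard direction is ``completeness of the Blaschke metric $\Rightarrow$ properness,'' which is precisely Calabi's conjecture and the reason \cite{li1990calabi} is among the citations. Your one-line argument for that direction (``a Cauchy sequence in $(M,h)$ with no limit would produce a boundary point of the immersion, contradicting properness'') assumes properness in order to conclude it, so it is circular, and no amount of comparison with the Hilbert metric of $\Omega$ fixes this, because for a merely immersed, Blaschke-complete affine sphere there is as yet no domain $\Omega$ to compare with --- producing the cone, and hence $\Omega$, is exactly what has to be proved. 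Relatedly, the two-sided comparison between the Blaschke metric and the Hilbert metric that you invoke is itself a nontrivial theorem whose proof presupposes the Cheng--Yau existence theory, so it cannot be used as an input at this stage. The rest of the sketch (uniqueness by the comparison principle, existence by regularization and interior estimates, and the identification of properly immersed affine spheres with the canonical one over the cone of their convex hull) is consistent with the cited proofs.
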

 \begin{remark}\label{rmk:correspondencehypaffinesphereandpropconvex}
Theorem \ref{thmchengyau} induces a correspondence between equivariant hyperbolic affine spheres in $\R^3$ and properly convex $\RP$-structures on a closed orientable surface $\Sg$ of genus $g\ge 1$.\end{remark}
Let $f:\!(M, h, \nabla)\hookrightarrow\R^3$ be an immersed affine sphere, namely an immersed hypersurface satisfying equations (\ref{structurequations}) with $S=-\Id_TM$. If $\widehat{\nabla}$ denotes the Levi-Civita connection of $h$, then $\nabla=\widehat\nabla +A$, where $A$ is a section of $T^*(M)\otimes\End(TM)$. In particular, for every $X\in\Gamma(TM)$ the quantity $A(X)$ is an endomorphism of $TM$.
\begin{proposition}[{\cite[Lemma 4.3, Lemma 4.4]{benoist2013cubic}}]\label{prop:pickform}
The section $A$ has the following properties:
\begin{itemize}
    \item[(1)] $A(X)Y=A(Y)X, \qquad\forall X,Y\in\Gamma(TM)$
    \item[(2)] The endomorphism $A(X)$ is trace-free and $h$-symmetric, $\forall X\in\Gamma(TM)$
    \item[(3)] $\mathrm{d}^{\widehat\nabla}A=0$, where $\mathrm{d}^{\widehat\nabla}A(X,Y)=\big(\widehat\nabla_XA\big)(Y)-\big(\widehat\nabla_YA\big)(X),\qquad X,Y\in\Gamma(TM)$
\end{itemize}
\end{proposition}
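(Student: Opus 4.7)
The plan is to derive (1)--(3) from two pieces of structure: the torsion-freeness of both $\nabla$ and $\widehat{\nabla}$, and the flatness of the ambient connection $D$ on $\R^{3}$ applied to the structure equations (\ref{structurequations}).

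For (1), the identity is immediate: since $\nabla$ and $\widehat{\nabla}$ are both torsion-free,
\[
A(X)Y - A(Y)X \;=\; (\nabla_X Y - \nabla_Y X) - (\widehat{\nabla}_X Y - \widehat{\nabla}_Y X) \;=\; [X,Y] - [X,Y] \;=\; 0.
\]

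For (2), I would expand the flatness identity $D_X D_Y Z - D_Y D_X Z - D_{[X,Y]}Z = 0$ using (\ref{structurequations}) and the affine-sphere condition $S = -\Id_{TM}$. Separating the resulting expression into its tangential and $\xi$-components yields, respectively, the Gauss equation $R^{\nabla}(X,Y)Z = h(X,Z)Y - h(Y,Z)X$ and the Codazzi equation $(\nabla_X h)(Y,Z) = (\nabla_Y h)(X,Z)$. Combined with the symmetry of $h$ in its two arguments, the latter promotes $\nabla h$ to a totally symmetric $(0,3)$-tensor. Using $\widehat{\nabla} h = 0$ one rewrites
\[
(\nabla_X h)(Y,Z) \;=\; -h(A(X)Y, Z) - h(Y, A(X)Z);
\]
total symmetry of this expression together with property (1) then produces $h(A(X)Y,Z) = h(Y, A(X)Z)$, i.e.\ the $h$-symmetry of $A(X)$. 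The trace-freeness is the apolarity condition $\nabla_X \omega_h = -\mathrm{tr}(A(X))\,\omega_h = 0$, where $\omega_h$ denotes the Riemannian volume form of $h$; this is the standard normalization built into the definition of the affine normal $\xi$ for a hyperbolic affine sphere.

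For (3), I would invoke the classical formula relating curvatures under a change of connection: writing $\nabla = \widehat{\nabla} + A$, one has
\[
R^{\nabla}(X,Y) \;=\; R^{\widehat{\nabla}}(X,Y) + (d^{\widehat{\nabla}} A)(X,Y) + [A(X), A(Y)].
\]
The key observation is a symmetry/antisymmetry dichotomy with respect to $h$. The three endomorphisms $R^{\nabla}(X,Y)$, $R^{\widehat{\nabla}}(X,Y)$ and $[A(X), A(Y)]$ are all $h$-antisymmetric: the first is checked directly from the explicit Gauss formula above, the second is the standard Riemannian curvature identity, and the third is the commutator of two $h$-symmetric operators. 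On the other hand, since $\widehat{\nabla} h = 0$ and $A(Y)$ is $h$-symmetric by (2), the derivative $(\widehat{\nabla}_X A)(Y)$ is again $h$-symmetric, whence so is $(d^{\widehat{\nabla}} A)(X,Y)$. Being simultaneously $h$-symmetric and $h$-antisymmetric, this endomorphism must vanish, giving (3). The main technical obstacle I anticipate is the careful bookkeeping in separating $D_X D_Y Z$ into its tangential and $\xi$-parts via the Leibniz rule; once that decomposition is clean, the remainder is an assembly of standard curvature identities and explicit verification of the relevant $h$-antisymmetries.
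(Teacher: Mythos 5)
Your proof is correct, and it is worth noting that the paper does not actually prove this proposition at all: it delegates it entirely to \cite[Lemma 4.3, Lemma 4.4]{benoist2013cubic}, so your write-up is in substance a self-contained reconstruction of the cited argument (Gauss and Codazzi equations from flatness of $D$, apolarity for the trace, and the change-of-connection curvature identity $R^{\nabla}=R^{\widehat\nabla}+d^{\widehat\nabla}A+[A(\cdot),A(\cdot)]$ together with the $h$-symmetric/$h$-antisymmetric dichotomy for part (3)). Two steps deserve to be made explicit. First, trace-freeness genuinely requires the Blaschke normalization and is not a consequence of the structure equations (\ref{structurequations}) alone: those equations with $S=-\Id$ only make $\xi$ equiaffine, which yields $\nabla\theta=0$ for the induced volume $\theta=\det(f_*\cdot,f_*\cdot,\xi)$; one needs the additional normalization $\theta=\omega_h$ (implicit in the paper's use of ``Blaschke metric/connection'') to get $\nabla_X\omega_h=-\tr\big(A(X)\big)\omega_h=0$, whereas for a general equiaffine transversal one would only obtain $\tr\big(A(X)\big)=X(\log\lambda)$ with $\lambda$ the conformal factor between the two volumes. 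You correctly locate the hypothesis, but it should be flagged as an assumption on $\xi$ rather than something derivable from the displayed equations. Second, the passage from the total symmetry of $(\nabla_Xh)(Y,Z)=-h(A(X)Y,Z)-h(Y,A(X)Z)$ to the $h$-symmetry of $A(X)$ needs the standard juggling: the tensor $U(X,Y,Z):=h(A(X)Y,Z)-h(Y,A(X)Z)$ is antisymmetric in its last two arguments by construction and symmetric in its first two arguments by property (1) combined with the total symmetry just established, and a cyclic chase through the permutations of three letters then forces $U=0$. With those two points spelled out, the argument is complete; parts (1) and (3) as you present them are exactly right.
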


\begin{definition}\label{def:picktensor}
The \emph{Pick tensor} is the $(0,3)$-tensor defined by \begin{equation}\label{picktensorandpickform} C(X,Y,Z):=h(A(X)Y,Z), \quad\forall X,Y,Z\in\Gamma(TM)\end{equation}
\end{definition}

\begin{proposition}\label{picktensoraffinesphere}
If $f:\!(M, h, \nabla)\hookrightarrow\R^{3}$ is an immersed affine sphere, then the Pick tensor is totally symmetric, namely in index notation $C_{ijk}$ we have $$C_{ijk}=C_{\sigma(ijk)},\quad\forall\sigma\in\mathfrak S_3 \ .$$
%In particular, the following relation holds $$ \big(\nabla_Xh\big)\big(Y,Z\big)=-2 C(X,Y,Z) \ . $$
\end{proposition}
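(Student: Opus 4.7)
The plan is to deduce the total symmetry of $C$ directly from the two algebraic properties $(1)$ and $(2)$ listed in Proposition \ref{prop:pickform}, without needing to invoke the Codazzi condition $\mathrm{d}^{\widehat\nabla}A=0$. Since $\mathfrak{S}_3$ is generated by the two adjacent transpositions $(1\,2)$ and $(2\,3)$, it is enough to produce those two symmetries of $C$ separately.

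First I would use property $(1)$, namely $A(X)Y=A(Y)X$, to interchange the first two arguments:
\begin{equation*}
C(X,Y,Z)=h\bigl(A(X)Y,Z\bigr)=h\bigl(A(Y)X,Z\bigr)=C(Y,X,Z),
\end{equation*}
which is the transposition $(1\,2)$. Next I would use property $(2)$, which asserts that $A(X)$ is $h$-symmetric for every $X\in\Gamma(TM)$, to swap the last two arguments:
\begin{equation*}
C(X,Y,Z)=h\bigl(A(X)Y,Z\bigr)=h\bigl(Y,A(X)Z\bigr)=h\bigl(A(X)Z,Y\bigr)=C(X,Z,Y),
\end{equation*}
where in the third equality I also use that $h$ is symmetric. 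This gives the transposition $(2\,3)$.

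Combining these two symmetries gives invariance of $C$ under every permutation of its three arguments, which is precisely the assertion $C_{ijk}=C_{\sigma(ijk)}$ for every $\sigma\in\mathfrak{S}_3$. There is no real obstacle in this proof: the only thing to check is that $(1\,2)$ and $(2\,3)$ generate $\mathfrak{S}_3$, which is a standard fact. The geometric content sits entirely in Proposition \ref{prop:pickform}, and in particular the fact that the affine sphere hypothesis $S=-\Id_{TM}$ (via the Codazzi-type identities for the Blaschke structure) is what guarantees both the $h$-symmetry and the $X\leftrightarrow Y$ symmetry of $A(X)Y$ used above.
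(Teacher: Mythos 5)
Your proof is correct: deriving the $(1\,2)$ symmetry from $A(X)Y=A(Y)X$ and the $(2\,3)$ symmetry from the $h$-symmetry of $A(X)$, then using that these transpositions generate $\mathfrak{S}_3$, is exactly the standard argument, and the paper itself states Proposition \ref{picktensoraffinesphere} without proof, relying implicitly on precisely the properties of Proposition \ref{prop:pickform} that you invoke. The only small inaccuracy is in your closing remark: the symmetry $A(X)Y=A(Y)X$ does not come from the affine sphere condition $S=-\Id_{TM}$ but simply from the fact that $A=\nabla-\widehat\nabla$ is the difference of two torsion-free connections; it is the $h$-symmetry (and trace-freeness) of $A(X)$ that uses the Blaschke structure equations. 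This does not affect the validity of the proof, since you correctly take both properties as given by Proposition \ref{prop:pickform}.
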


%The last two results hold in a more general context where $M$ is not necessarily an immersed affine sphere, see \cite[\S 4.2]{benoist2013cubic}. 

\begin{theorem}[{\cite[Lemma 4.8]{benoist2013cubic}}]\label{thm:picktensor} Let $M$ be a smooth orientable surface endowed with a metric $h$ and a compatible complex structure $J$. Suppose the tensor $C(X,Y,Z)=h(A(X)Y,Z)$ is totally symmetric, then $A(X)$ is trace-free and $\mathrm{d}^{\widehat\nabla}A=0, \ \forall X\in\Gamma(TM)$ if and only if $C=\Re(q)$, where $q$ is a cubic holomorphic differential on $(M,J)$. If this is the case, then $q=C(\cdot,\cdot,\cdot)+iC(J\cdot,J\cdot,J\cdot)$.
\end{theorem}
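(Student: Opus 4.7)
I would work locally in a holomorphic coordinate $z$ on $(M,J)$, writing $h = 2\lambda\,dz\,d\bar z$. The key observation is that any Riemannian metric on an oriented real surface is automatically K\"ahler with respect to the induced complex structure, so in such a coordinate the only non-vanishing Christoffel symbols of $\widehat\nabla$ are $\Gamma^z_{zz} = \partial_z\log\lambda$ and its conjugate. The argument then reduces to a type decomposition: the complexification of the real totally symmetric $(0,3)$-tensor $C$ splits into pieces of bidegree $(3,0), (2,1), (1,2), (0,3)$, i.e.\ into the four components $C_{zzz}$, $C_{zz\bar z}$, $C_{z\bar z\bar z} = \overline{C_{zz\bar z}}$, and $C_{\bar z\bar z\bar z} = \overline{C_{zzz}}$.

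The trace-free condition is then a direct computation: from $C(X,Y,Z) = h(A(X)Y,Z)$ one gets $\mathrm{tr}(A(X)) = h^{mk}C_{mki}X^i$, and since $h^{z\bar z}$ is the only non-zero component of $h^{-1}$, this vanishes for every $X$ if and only if $C_{zz\bar z} = C_{z\bar z\bar z} = 0$, i.e.\ the mixed-type components disappear. Equivalently, $C = \Re(q)$ for some cubic differential $q = f\,dz^3$ on $(M,J)$, with $C_{zzz}$ proportional to $f$; at this point $q$ is not yet assumed to be holomorphic.

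Next I would translate the condition $\mathrm{d}^{\widehat\nabla}A = 0$ into a condition on $C$. Since $C$ is totally symmetric, the condition is equivalent to $\widehat\nabla C$ being totally symmetric in all four indices. Using that the only non-trivial Christoffels have all indices equal to $z$ (or all to $\bar z$) and that $C$ has only $(3,0)+(0,3)$ components, every component of $\widehat\nabla C$ that could violate total symmetry reduces, up to conjugation, to $\widehat\nabla_{\bar z}C_{zzz} = \partial_{\bar z}C_{zzz}$. Hence $\mathrm{d}^{\widehat\nabla}A = 0$ is equivalent to $\partial_{\bar z}f = 0$, i.e.\ the holomorphicity of $q$ on $(M,J)$. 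The converse follows by running these equivalences backwards.

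Finally, the identification $q = C(\cdot,\cdot,\cdot)+iC(J\cdot,J\cdot,J\cdot)$ follows from the type decomposition itself: as $q$ is of pure type $(3,0)$, its complex-linear extension satisfies $q(JX,JY,JZ) = i^3 q(X,Y,Z) = -i\,q(X,Y,Z)$, so that $C(J\cdot,J\cdot,J\cdot) = \Re q(J\cdot,J\cdot,J\cdot) = \Im q$, and therefore $C + iC(J,J,J) = \Re q + i\Im q = q$. The only point requiring genuine care is the bookkeeping of which components of $\widehat\nabla C$ must be compared and of which Christoffels survive; the automatic K\"ahlerness in real dimension two is what makes the two sides of the stated equivalence essentially components of the same tensor identity, and there is no substantial obstacle beyond this.
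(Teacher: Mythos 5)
Your argument is correct. Note that the paper itself states this result without proof, citing \cite[Lemma 4.8]{benoist2013cubic}, so there is no in-paper proof to compare against; your local-coordinate argument (automatic K\"ahlerness in real dimension two, type decomposition of the complexified tensor $C$, trace-freeness killing the mixed components, and $\mathrm{d}^{\widehat\nabla}A=0$ reducing to $\partial_{\bar z}C_{zzz}=0$ once only the pure components survive) is essentially the standard proof given in that reference, and the final identification $q=C(\cdot,\cdot,\cdot)+iC(J\cdot,J\cdot,J\cdot)$ via $q(J\cdot,J\cdot,J\cdot)=-i\,q(\cdot,\cdot,\cdot)$ is also correct.
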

\begin{remark}
Thanks to Proposition \ref{picktensoraffinesphere} we notice that if $M$ is an immersed affine sphere in $\R^3$ its Pick tensor is totally symmetric, hence it can always be expressed as the real part of a cubic holomorphic differential on $(M,J)$.
\end{remark}
We conclude this section by giving a parametrization of $\mathcal{B}_{0}(T^{2})$ which will be useful for our discussion. First we recall that (one of the possible) definition of the Teichm\"uller space $\Teich(T^2)$ is the following:$$\almost:=\{J\in\End(\R^2) \ | \ J^2=-\mathds{1}, \ \rho(v,Jv)>0 \ \text{for some} \ v\in\R^2\setminus\{0\}\} \ , $$ where $\rho:=\dx\wedge\dy$ is the standard area form on $\R^2$ (see \cite[\S 10.2]{farb2011primer}). In particular, this set has a smooth manifold structure of real dimension two. Let $\pi:\cubic\to\Teich(T^2)$ be the holomorphic vector bundle of cubic differentials, i.e. a point in $\cubic$ is represented by a pair $(J,q)$ where $J\in\Teich(T^2), q\in H^0\big(T^2,K^{\otimes^3}\big)\cong\C$ and $\pi(J,q):=J$. Let $\cubiccomplement:=\cubic\setminus\big(\Teich(T^2)\times\{0\}\big)$ be the complement of the zero section, then:
\begin{corollary}[\cite{rungi2021pseudo}]\label{cor:cubictorusandproperlyconvex}
There exists a bijection between $\deft$ and $\cubiccomplement$.
\end{corollary}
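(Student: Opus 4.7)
My plan is to exhibit the bijection explicitly, by sending a properly convex $\RP$-structure to the pair consisting of the complex structure induced by the Blaschke metric of its affine sphere lift and the cubic holomorphic differential determined by its Pick tensor. Concretely, given $[(\phi,M)] \in \deft$, Theorem \ref{thmchengyau} together with Remark \ref{rmk:correspondencehypaffinesphereandpropconvex} yields a $\pi_1(T^2)$-equivariant properly embedded hyperbolic affine sphere $\widetilde M \subset \R^3$ asymptotic to the cone over the convex domain $\Omega$ with $M \cong \Omega/\Gamma$. The Blaschke metric $h$ and the Pick tensor $C = h(A\cdot,\cdot)$ descend to $T^2$ by equivariance. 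By Proposition \ref{picktensoraffinesphere}, $C$ is totally symmetric and, combined with the properties in Proposition \ref{prop:pickform}, Theorem \ref{thm:picktensor} produces a holomorphic cubic differential $q = C(\cdot,\cdot,\cdot) + iC(J\cdot,J\cdot,J\cdot)$ on $(T^2,J)$, where $J$ is the almost complex structure compatible with $h$ and the orientation. This defines the candidate map $\Phi \colon \deft \to \cubic$.

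Next, I would check that the image of $\Phi$ avoids the zero section. If $q \equiv 0$, then $A \equiv 0$, hence $\nabla = \widehat\nabla$, so the affine sphere is an open subset of the unit hyperboloid and the induced structure on $T^2$ is a hyperbolic $\RP$-structure; this is excluded because $\chi(T^2)=0$. Injectivity of $\Phi$ is then a consequence of the uniqueness part of Theorem \ref{thmchengyau}: two affine spheres sharing the same Blaschke metric and Pick tensor satisfy identical structure equations (\ref{structurequations}), so they coincide up to an element of $\SL(3,\R)$; equivariance under $\pi_1(T^2)$ then identifies the holonomy representations up to conjugation, and the markings up to isotopy.

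The main obstacle is surjectivity, which requires reconstructing an equivariant hyperbolic affine sphere from an arbitrary pair $(J,q) \in \cubiccomplement$. This amounts to solving the Wang equation for a conformal factor $u$ with Blaschke metric $h = e^{2u} g_0$, where $g_0$ is the flat representative of the conformal class of $J$. The key simplification, peculiar to genus one, is that every holomorphic cubic differential on $T^2$ can be written as $q = c\, dz^{\otimes 3}$ with $c \in \C^*$ in a flat uniformizing coordinate, so $\|q\|_{g_0}$ is constant. Wang's equation then reduces to an algebraic equation for $u$ whose explicit constant solution exists precisely because $c \neq 0$ (this is where the removal of the zero section is essential) and produces a flat Blaschke metric on $\R^2$. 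Integrating the structure equations (\ref{structurequations}) provides a properly embedded hyperbolic affine sphere $\widetilde M \subset \R^3$, and the $\Z^2$-action by affine transformations preserving $(h,q)$ descends to a discrete subgroup $\Gamma \subset \SL(3,\R)$ whose quotient yields the required properly convex $\RP$-structure on $T^2$, inverting $\Phi$.
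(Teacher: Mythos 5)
Your proposal is correct and follows essentially the same route as the paper: the same map $[\Omega/_\Gamma]\mapsto(J,q)$ via the Blaschke metric and Pick tensor of the equivariant affine sphere from Theorem \ref{thmchengyau}, with injectivity from the fundamental uniqueness theorem of affine differential geometry and surjectivity from the inverse construction via Wang's equation. The only difference is one of detail: where the paper delegates surjectivity to \cite{changping1990some} and \cite{rungi2021pseudo}, you sketch the genus-one reduction of Wang's equation to an algebraic equation (correctly identifying $c\neq 0$ as the reason the zero section must be removed), and you supply the Gauss--Bonnet argument for why the image avoids the zero section, which the paper leaves implicit.
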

\begin{proof}The above bijection follows from Theorem \ref{thmchengyau}. In fact, by Remark \ref{rmk:correspondencehypaffinesphereandpropconvex} to any $\big[\Omega/_{\Gamma}\big]\in\deft$ we have an associated equivariant hyperbolic affine sphere $M$ in $\R^3$ which is determined by its Blaschke metric and the Pick tensor (see Definition \ref{def:picktensor} and \cite{nomizu1994affine}). Hence, let $\chi:\deft\to Q^3_0(\Teich(T^2))$ be the map that associates to each $\big[\Omega/_{\Gamma}\big]$ the pair $(J,q)$ where $J$ is the complex structure induced by the Blaschke metric and $q=c\mathrm{d}z^3$ is a non-zero cubic holomorphic differential whose real part coincides with the Pick tensor of $M$. It can be shown (see \cite{changping1990some},\cite[\S 2.3]{rungi2021pseudo}) that for any such $(J,q)$ we can find a unique (up to unimodular affine transformations) hyperbolic affine sphere in $\R^{3}$ that is invariant under a subgroup $\Gamma < \SL(3,\R)$ isomorphic to $\pi_{1}(T^{2})$. This implies directly that the map $\chi$ is a bijection. 
\end{proof}
\subsection{A new family of metrics}\label{sec:3.1}
In the following we want to introduce the new metrics defined on the space $\cubic$ which can be restricted to $\deft$. To do this, local (actually global) coordinates must be introduced on the holomorphic bundle of cubic differentials.\\ \\ Notice that the tangent space of $\almost$ at a point $J$ can be identified with $$T_J\almost=\{\dot J\in\End(\R^2) \ | \ \dot J J+J\dot J=0\}$$ and it is equipped with a natural K\"ahler structure $$\langle \dot J,\dot J'\rangle_J:=\frac{1}{2}\tr(\dot J\dot J'),\quad \Omega_J(\dot J,\dot J'):=-\langle \dot J,J\dot J'\rangle,\quad \mathcal I_J(\dot J):=-J\dot J \ . $$ 
\begin{lemma}[{\cite[Lemma 4.3.2]{trautwein2018infinite}}]\label{lem:almostkahler}
Let $\Hyp$ be the hyperbolic plane with complex coordinate $z=x+iy$ and with K\"ahler structure $$g_{\Hyp}=\frac{\mathrm{d}x^2+\mathrm{d}y^2}{y^2} \qquad\qquad \omega_{\Hyp}=-\frac{\mathrm{d}x\wedge\mathrm{d}y}{y^2} \ . $$ Then, there exists a unique K\"ahler isometry $j:\Hyp\to\almost$ such that $j(i)=\begin{pmatrix}0  & -1 \\ 1 & 0
\end{pmatrix}$. It is given by the formula \begin{equation}\label{kahlerisometryalmost}
    j(x+iy):=\begin{pmatrix}
    \frac{x}{y} & -\frac{x^2+y^2}{y} \\ \frac 1{y} & -\frac x{y} 
    \end{pmatrix} \ . 
\end{equation}
\end{lemma}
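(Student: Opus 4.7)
The plan is to exploit the natural transitive $\SL(2,\R)$-actions on both spaces and construct $j$ as the induced equivariant diffeomorphism. First I would introduce the conjugation action $A\cdot J := AJA^{-1}$ of $\SL(2,\R)$ on $\almost$: this is well-defined because $\det A=1$ implies that $A$ preserves the standard area form $\rho$, hence preserves the compatibility condition $\rho(v,Jv)>0$. On the other hand, $\SL(2,\R)$ acts on $\Hyp$ by M\"obius transformations. A direct check shows that both actions are transitive and that the stabilizers of $i\in\Hyp$ and of $J_0 := \begin{pmatrix}0 & -1\\ 1 & 0\end{pmatrix}\in\almost$ both equal $\SO(2)\subset\SL(2,\R)$. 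This yields canonical identifications $\Hyp \cong \SL(2,\R)/\SO(2) \cong \almost$, and I define $j$ to be the resulting equivariant diffeomorphism; by construction $j(i)=J_0$.

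To verify that $j$ is a K\"ahler isometry, I would use the $\SL(2,\R)$-invariance of both K\"ahler structures: on $\Hyp$ this is classical, and on $\almost$ it follows because the inner product $\langle\dot J,\dot J'\rangle_J=\tfrac12\tr(\dot J\dot J')$, the complex structure $\mathcal I_J(\dot J)=-J\dot J$ and the associated symplectic form are all preserved by conjugation by $A\in\SL(2,\R)$. Because $j$ is equivariant, checking that $dj_i\colon T_i\Hyp \to T_{J_0}\almost$ intertwines the pointwise Riemannian, symplectic and complex data suffices. Both tangent spaces identify naturally with $\Lsl(2,\R)/\mathfrak{so}(2)$, so the verification reduces to comparing, on the orthogonal complement of $\mathfrak{so}(2)$, the induced inner products (each proportional to the Killing form) and the induced complex structures (both given by the $\mathrm{ad}$-action of a generator of $\mathfrak{so}(2)$). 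The uniqueness claim then follows from the fact that a K\"ahler isometry between simply connected K\"ahler surfaces is determined by its value and its (complex-linear) differential at a single point, and the latter is forced by compatibility with the symmetric-space structure fixed by $j(i)=J_0$.

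For the explicit formula, I would select for each $z=x+iy\in\Hyp$ an element $A_z\in\SL(2,\R)$ with $A_z\cdot i = z$; a convenient choice is
\[
A_z := \begin{pmatrix}\sqrt{y} & x/\sqrt{y} \\ 0 & 1/\sqrt{y}\end{pmatrix},
\]
for which $A_z\cdot i = (\sqrt{y}\,i + x/\sqrt{y})/(1/\sqrt{y}) = x+iy = z$. Then by equivariance $j(z)=A_z J_0 A_z^{-1}$, and a direct $2\times 2$ matrix multiplication produces exactly the stated expression \eqref{kahlerisometryalmost}. The main obstacle is the isometry verification in the middle step: although it reduces to a single tangent-space calculation by equivariance, one must be careful about the normalizations relating the Killing form on $\Lsl(2,\R)$, the hyperbolic metric $g_\Hyp$, and the trace pairing on $\almost$, since an incorrect overall constant would force a global rescaling incompatible with the requirement $j(i)=J_0$.
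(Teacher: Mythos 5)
The paper offers no proof of this lemma --- it is imported verbatim from \cite[Lemma 4.3.2]{trautwein2018infinite} --- so there is no internal argument to compare against; I assess your proposal on its own terms. The existence part is correct and is the natural argument: conjugation preserves $\almost$ because $\det A=1$, both actions are transitive with stabilizer $\SO(2)$ at $i$ and at $J_0$, the K\"ahler data on both sides are conjugation- resp.\ M\"obius-invariant, and $j(z)=A_zJ_0A_z^{-1}$ with your $A_z$ does reproduce \eqref{kahlerisometryalmost} (the matrix multiplication checks out). The normalization worry you flag at the end also resolves favourably: at $(x,y)=(0,1)$ one finds $\partial_x j=\diag(1,-1)$ and $\partial_y j=\bigl(\begin{smallmatrix}0&-1\\-1&0\end{smallmatrix}\bigr)$, which are orthonormal for $\tfrac12\tr(\dot J\dot J')$ and satisfy $-J_0(\partial_xj)=\partial_yj$; so $\mathrm{d}j_i$ is a $\C$-linear isometry with no rescaling, and by invariance $j$ is a K\"ahler isometry everywhere.

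The genuine gap is the uniqueness step. The differential at $i$ is \emph{not} forced by $j(i)=J_0$: it only has to be a $\C$-linear isometry between two one-dimensional Hermitian spaces, and there is a circle of those, each of which is realized. Concretely, the elliptic rotations $r_\theta\in\PSL(2,\R)$ fixing $i$ act on $T_i\Hyp$ by $e^{2i\theta}$, so $j\circ r_\theta$ is for every $\theta$ a K\"ahler isometry sending $i$ to $J_0$, and it differs from $j$ whenever $e^{2i\theta}\neq 1$. Even requiring $\mathrm{d}j_i$ to intertwine the isotropy $\SO(2)$-representations does not pin it down, since the unitary self-intertwiners of a rotation representation of $S^1$ on $\C$ again form a circle. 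Uniqueness holds only once one demands that $j$ be $\SL(2,\R)$-equivariant --- and then it is immediate from transitivity, with no appeal to a rigidity statement like Lemma \ref{lem:1jetisometry}: equivariance forces $j(A\cdot i)=A\,j(i)\,A^{-1}$ for all $A$. That equivariance hypothesis is implicit in your construction (and in the cited source) but absent from your uniqueness argument and from the statement as reproduced here; you should add it explicitly rather than appeal to ``compatibility with the symmetric-space structure.''
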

\begin{remark}
The minus sign in front of the area form on $\Hyp$ shows up since we are considering the relation $\Omega(\cdot,\cdot)=\langle\cdot,\mathcal I\cdot\rangle$ on $\almost$.
\end{remark}

In particular, thanks to this last lemma and the isomorphism $\Teich(T^2)\cong\almost$, we can identify the Teichm\"uller space of the torus with $\Hyp$. %Whenever we are thinking of the Teichm\"uller space of the torus as $\Hyp$, we will denote the total space of $\cubic$ as $\cubichyp$. 
Since $\Teich(T^2)$ is contractible, we can identify $\cubic$ with $\Hyp\times\C$, where $\C$ is a copy of the fiber $\cubic_z$ over a point $z\in\Hyp$. Let $z=x+iy$ and $w=u+iv$ be (gobal) coordinates on $\Hyp$ and $\C$, respectively. There is an $\SL(2,\R)$-action defined on $\Hyp\times \C$ given by
%\begin{equation}\label{SL2Raction}
$$\begin{pmatrix}
a & b \\ c & d
\end{pmatrix}\cdot(z,w):=\bigg(\frac{az+b}{cz+d}, (cz+d)^3w\bigg),\qquad \text{with} \ (z,w)\in\Hyp\times\C,\quad ad-bc=1 \ . $$
%\end{equation}
There is also an $S^1$-action on the fiber \begin{equation}\label{S1action}\theta\cdot(z,w):=(z,e^{i\theta}w), \quad\theta\in\R\end{equation} which is simply the rotation of the point $w\in\C$ and it fixes the copy of the hyperbolic plane $\Hyp\times\{0\}$. Moreover, there is a metric on the fibre $\C$, namely the one induced by the norm 
\begin{equation}\label{normvertical}|w|_z^2=\Imm(z)^3|w|^2 \qquad \text{for} \ z\in\Hyp, w\in\cubic_z \ .\end{equation} \begin{remark}\label{matrixSL2}
Notice that the $\SL(2,\R)$ action on the first factor $\Hyp$ is the usual action by M\"obius transformations, hence for any $z\in\Hyp$ there exists $P\in\SL(2,\R)$ such that $P\cdot i=z$. The matrix $P$ is given by $$P=\frac{1}{\sqrt y}\begin{pmatrix} 1 & -x \\ 0 & y
\end{pmatrix} \ . $$ 
\end{remark}
%Pick $J\in\almost$ and let us define the space of $J$-complex symmetric tri-linear forms by \begin{align*}S_3(\R^2, J):&=\{\gamma:\R^2\otimes\R^2\otimes\R^2\longrightarrow\C \ | \ \gamma \ \text{ is symmetric and} \ (J,i)-\text{tri-linear}\} \\ &\cong\{\tau:\R^2\to\C \ | \ \text{for all} \ \alpha,\beta\in\R \ \text{and} \ v\in\R^2 \ \text{it holds} \ \tau(\alpha v+\beta Jv)=(\alpha+i\beta)^3\tau(v)\} \ . \end{align*}
%This space can be seen as the fiber of a complex line bundle $\mathcal{L}_3(\R^2)\to\almost$ endowed with a natural $\SL(2,\R)$-action $$P\cdot (J,\gamma):=(PJP^{-1}, (P^{-1})^*\gamma),\qquad \ \text{for} \ P\in\SL(2,\R) \ .$$
%\begin{lemma}[{\cite[Lemma 5.2.1]{trautwein2018infinite}}]\label{lem:Trautweinfibremap}
%Define the map $\varphi:\cubichyp\to\Hom(\R^2\otimes\R^2\otimes\R^2, \C)$ by\begin{align*}\varphi(z,w) : \ &\R^2\longrightarrow\C \\ & v\mapsto \bar w(v_1-\bar zv_2)^3\end{align*}and let $j:\Hyp\to\almost$ be the map in (\ref{kahlerisometryalmost}). Then, the following holds: \begin{itemize}
    %\item[(1)]$\varphi(z,w)\in S_3(\R^2,j(z)), \ \text{for all} \  (z,w)\in\cubichyp$.
    %\item[(2)]The fibre map $\varphi(z,\cdot) : \cubichyp_z\cong\C\to S_3(\R^2,j(z))$ is a complex anti-linear isometry for every $z\in\Hyp$.
    %\item[(3)] The bundle map $(j,\varphi): \cubichyp\to\mathcal{L}_3(\R^2)$ is a $\SL(2,\R)$-equivariant bijection.\end{itemize}\end{lemma}
Let $\{\frac{\partial}{\partial x},\frac \partial{\partial y},\frac\partial{\partial u},\frac\partial{\partial v}\}$ be a real basis of the tangent space of $\Hyp\times\C$, and let $f:[0,+\infty)\to(-\infty,0]$ be a smooth function such that:$$
    (i) \ f(0)=0, \qquad(ii) \ f'(t)<0, \quad\forall t\ge 0,\qquad (iii) \lim_{t\to+\infty}f(t)=-\infty \ .$$
Then we define the following symmetric bi-linear form
\begin{equation*}
    \g_{(z,w)}=\begin{pmatrix}
    \frac 1{y^2}\big(1-f+3(u^2+v^2)y^3f'\big) & 0 & 2f'vy^2 & -2f'uy^2 \\ 0 & \frac 1{y^2}\big(1-f+3(u^2+v^2)y^3f'\big) & 2f'uy^2 & 2f'vy^2 \\ 2f'vy^2 & 2f'uy^2 & \frac 4{3}f'y^3 & 0 \\ -2f'uy^2 & 2f'vy^2 &0 & \frac{4}{3}f'y^3
\end{pmatrix}\end{equation*}
and the $2$-form
\begin{align*}\ome_{(z,w)}=&\bigg(-1+f-3f'y^3(u^2+v^2)\bigg)\frac{\dx\wedge\dy}{y^2}-\frac 4{3}f'y^3\du\wedge\devu \\&-2y^2f'\bigg(u(\dx\wedge\du+\dy\wedge\devu)+v(\du\wedge\dy-\devu\wedge\dx)\bigg)\end{align*}
where the functions $f,f'$ are evaluated at $y^3(u^2+v^2) \ .$
Let $\i_{(i,w)}: T_{(i,w)}\big(\Hyp\times\C\big)\to T_{(i,w)}\big(\Hyp\times\C\big)$ be the following almost-complex structure \begin{equation*}
    \i_{(i,w)}=\begin{pmatrix}
    0 & -1 & 0 & 0 \\ 1 & 0 & 0 & 0 \\ 0 & 0 & 0 & -1 \\ 0 & 0 & 1 & 0
    \end{pmatrix} \ . 
\end{equation*}
written in the basis $\{\frac{\partial}{\partial x},\frac \partial{\partial y},\frac\partial{\partial u},\frac\partial{\partial v}\} \ .$ 
\begin{theorem}[\cite{rungi2021pseudo}]\label{teo:rungitamburelli}
The triple $(\g_f,\i,\ome_f)$ defines an $\SL(2,\R)$-invariant pseudo-K\"ahler structure on $\Hyp\times\C$, hence on $\cubic$. Moreover, it restricts to a $MCG(T^2)$-invariant pseudo-K\"ahler metric on the complement of $\Hyp\times\{0\}$, hence on $\deft$.
\end{theorem}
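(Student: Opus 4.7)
The plan is to reduce every verification to a single-point computation at $(i,w)\in\Hyp\times\C$ using the transitive $\SL(2,\R)$-action on $\Hyp$, and to settle closedness of $\ome_f$ once and for all by exhibiting an explicit K\"ahler potential.

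First, the almost-complex structure $\i$ is given in the statement only at $(i,w)$, but direct inspection shows that $\i_{(i,w)}$ is nothing other than the standard complex structure associated to the holomorphic coordinates $(z,w)$ on $\Hyp\times\C$. I therefore take $\i$ to be this standard complex structure globally: it is automatically smooth and integrable, and is $\SL(2,\R)$-invariant because the action $A\cdot(z,w) = \bigl(\tfrac{az+b}{cz+d},(cz+d)^3 w\bigr)$ is by biholomorphisms. For the invariance of $\g_f$ and $\ome_f$, the key observation is that the scalar quantity $t := \Imm(z)^{3}|w|^{2} = y^{3}(u^{2}+v^{2})$ is $\SL(2,\R)$-invariant, since $\Imm(A\cdot z) = y/|cz+d|^2$ and $|(cz+d)^{3}w|^{2} = |cz+d|^{6}|w|^{2}$ cancel. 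Hence $f(t)$ and $f'(t)$ are invariant scalar functions, and invariance of the two tensors reduces to matching coefficients under the pullback by the matrix $P$ of Remark \ref{matrixSL2} sending $i$ to an arbitrary $z\in\Hyp$.

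For closedness, I would exhibit the K\"ahler potential
\[
K(z,w) := -\log\Imm(z) + \tfrac{1}{3}\int_{0}^{t}\!\frac{f(s)}{s}\, ds,
\]
which is smooth because $f(0)=0$ implies $f(s)/s$ extends smoothly across $s=0$. Writing $\psi(t) := \tfrac{1}{3}\int_{0}^{t}\!f(s)/s\,ds$, so that $t\psi'(t) = f(t)/3$, a direct computation in complex coordinates $(z,\bar z,w,\bar w)$ yields
\[
K_{z\bar z} = \frac{1-f+3tf'}{4y^{2}}, \qquad K_{z\bar w} = -\tfrac{i}{2}\, f'(t)\, y^{2} w, \qquad K_{w\bar w} = \tfrac{1}{3}\, f'(t)\, y^{3}.
\]
Translating these into real coordinates via $dz\wedge d\bar z = -2i\,dx\wedge dy$ and similar identities, one obtains $\ome_f = -2i\,\partial\bar\partial K$, which simultaneously delivers $d\ome_f = 0$ and the $(1,1)$-type of $\ome_f$.

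The pseudo-Hermitian condition $\g_f(\i X, \i Y) = \g_f(X, Y)$, the identity $\ome_f(X,Y) = \g_f(X, \i Y)$, and the non-degeneracy of $\g_f$ are checked at $(i,w)$ by explicit matrix multiplications and propagated everywhere by $\SL(2,\R)$-invariance. A block determinant at $(i,w)$ gives $\det\g_f|_{(i,w)} = \tfrac{16}{9}(f'(t))^{2}(1-f(t))^{2}>0$ (using $f\le0$ and $f'<0$), with neutral signature $(2,2)$ visible by specializing to $w=0$. The restriction to $\deft\cong\cubiccomplement\cong\Hyp\times\C^{*}$ is granted by Corollary \ref{cor:cubictorusandproperlyconvex}, and $\MCG(T^{2})$-invariance is immediate since $\MCG(T^{2})\cong\SL(2,\Z)\subset\SL(2,\R)$. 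The main obstacle is the verification $\ome_f = -2i\partial\bar\partial K$ itself: the $K_{z\bar z}$ coefficient reduces to the identity $9t^{2}\psi''+6t\psi' = -f + 3tf'$, satisfied by $\psi'(t) = f(t)/(3t)$, while $K_{z\bar w}$ is streamlined through $K_{z\bar w} = -\tfrac{3iy^{2}w}{2}\frac{d}{dt}(t\psi'(t))$; care is needed at the locus $w=0$, where smoothness of the whole construction ultimately relies on $f(0)=0$.
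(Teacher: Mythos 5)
The paper does not actually prove this theorem: it is imported verbatim from the authors' earlier work \cite{rungi2021pseudo}, so there is no in-text argument to compare yours against. Judged on its own terms, your proposal is correct and complete in outline, and its centrepiece --- the global K\"ahler potential $K(z,w)=-\log\Imm(z)+\tfrac13\int_0^t f(s)/s\,ds$ with $t=\Imm(z)^3|w|^2$ --- does check out: with $\psi'(t)=f(t)/(3t)$ one gets $\partial_z\partial_{\bar z}\psi=\tfrac{3tf'-f}{4y^2}$, $\partial_w\partial_{\bar w}\psi=\tfrac13 f'y^3$, $\partial_z\partial_{\bar w}\psi=-\tfrac i2 f'y^2w$, and expanding $-2i\partial\bar\partial K$ in the real coordinates $(x,y,u,v)$ reproduces every coefficient of the stated $\ome_f$, including the cross terms $-2y^2f'\big(u(\dx\wedge\du+\dy\wedge\devu)+v(\du\wedge\dy-\devu\wedge\dx)\big)$. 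This buys closedness and the $(1,1)$-type in one stroke, and it also gives a slicker proof of $\SL(2,\R)$-invariance than coefficient-matching: since $t$ is invariant and $-\log\Imm(A\cdot z)=-\log\Imm(z)+2\Ree\log(cz+d)$ differs from $-\log\Imm(z)$ by a pluriharmonic function, $\partial\bar\partial K$ is invariant outright, and invariance of $\g_f=-\ome_f(\cdot,\i\,\cdot)$ follows because the action is by biholomorphisms. Two small points deserve a sentence each in a final write-up: smoothness of $\psi(t)$ across $w=0$ needs the remark that $f(s)/s=\int_0^1 f'(\sigma s)\,d\sigma$ extends smoothly to $s=0$ (so that the composition with the nonnegative function $t(z,w)$ is smooth --- the same issue already implicit in the definition of $\g_f$); and the claim that the signature is $(2,2)$ everywhere should be stated as ``non-degenerate everywhere, signature $(2,2)$ at one point, base connected,'' since positivity of the determinant alone does not pin down the signature of a $4\times4$ symmetric matrix. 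The $\MCG(T^2)$-invariance step also tacitly uses that the mapping class group acts on $\cubic\cong\Hyp\times\C$ through the restriction of the given $\SL(2,\R)$-action to $\SL(2,\Z)$, which is standard for the torus but worth one line of justification.
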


\begin{remark}
By exploiting the properties of the function $f$, it is clear that the metric $\g_f$ is positive definite on $\Hyp\times\{0\}$ and negative-definite on the fibre $\{0\}\times\C$, hence it is of neutral signature $(2,2)$.
\end{remark}

\section{Global Darboux coordinates for complete Lagrangian fibrations}\label{sec:completelagrangianfibrations}
We initially recall the basic notions of symplectic geometry and integrable systems that we will need later on (see for example \cite[\S 18]{da2008lectures}). Then, we begin the study of complete Lagrangian fibrations and their connection with completely integrable Hamiltonian systems. \begin{definition}
A \emph{Hamiltonian system} is a triple $(M,\omega,H)$, where $(M,\omega)$ is a symplectic manifold and $H\in C^\infty(M,\R)$ is a function, called the \emph{Hamiltonian} function.
\end{definition}If $(M,\omega)$ is a symplectic manifold and $f\in C^\infty(M,\R)$, then the \emph{Hamiltonian vector field} $\mathbb X_f\in\Gamma(TM)$ associated with $f$ is defined by the following property \begin{equation}\label{hamiltonianfield}\omega(\mathbb X_f,Y)=\mathrm df(Y), \quad\forall Y\in\Gamma(TM)\ .\end{equation}
\begin{definition}
Let $(M,\omega,H)$ be a Hamiltonian system. A function $f\in C^\infty(M,\R)$ is called an \emph{integral of motion} if $$\omega(\mathbb X_f,\mathbb X_H)=0 \ .$$In other words, any integral of motion $f$ is constant along the integral curves of $\mathbb X_H$.
\end{definition}
\begin{definition}
A Hamiltonian system $(M,\omega,H)$ is \emph{completely integrable} if it possesses $n=\frac{1}{2}\dim(M)$ integral of motions $f_1=H,f_2,\dots,f_n$ such that\begin{itemize}
    \item[(1)] The differentials $(\mathrm d f_1)_p,\dots,(\mathrm df_n)_p$ are linearly independent for each $p\in M$. \item[(2)] They are pairwise in involution, i.e. $\omega(\mathbb X_{f_i},\mathbb X_{f_j})=0$ for each $i,j=1,\dots,n$.
\end{itemize}
\end{definition}The first condition in the previous definition is called \emph{independence} and the second one is called \emph{involutivity}. Notice that one of the integral of motion can always be taken to be the Hamiltonian function of the system. Furthermore, at each $p\in M$, the Hamiltonian vector fields associated with the integral of motions span an isotropic subspace of $T_pM$.\newline One of the most relevant results of this theory is the following \begin{theorem}[Arnold-Liouville, \cite{arnol2013mathematical}]\label{teo:arnlod}
Let $(M,\omega,H)$ be a completely integrable Hamiltonian system of dimension $2n$ and with integral of motions $f_1=H,f_2,\dots,f_n$. Let $c\in\R^n$ be a regular value of the map $f=(f_1,\dots,f_n):M\to\R^n$. Then, \begin{itemize}
    \item[(1)] The level set $f^{-1}(c)$ is a Lagrangian submanifold of $M$.
    \item[(2)] If the Hamiltonian vector fields $\mathbb X_{f_1},\dots\mathbb X_{f_n}$ are complete on the level set $f^{-1}(c)$, then each connected component of $f^{-1}(c)$ is diffeomorphic to $\R^k\times T^{n-k}$, for some $0\le k\le n$. Moreover, that component has coordinates $\theta_1,\dots,\theta_n$ called angle coordinates, in which the flows of $\mathbb X_{f_1},\dots\mathbb X_{f_n}$ are linear.
    \item[(3)]There are coordinates $\psi_1,\dots,\psi_n$, called action coordinates such that the manifold $(M,\omega)$ is symplectomorphic to $(\R^{n+k}\times T^{n-k},\omega_0)$, where $\omega_0=\displaystyle\sum_{i=1}^n\theta_i\wedge\psi_i$.
\end{itemize}
\end{theorem}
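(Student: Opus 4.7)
My plan is to prove the three assertions in order, following the classical Arnold--Liouville strategy.

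For \textbf{(1)}, the statement reduces to a linear-algebra computation using the defining relation (\ref{hamiltonianfield}). Regularity of $c$ ensures that $(\mathrm df_1)_p,\dots,(\mathrm df_n)_p$ are independent at every $p\in f^{-1}(c)$, so that
\[T_p\bigl(f^{-1}(c)\bigr)=\bigcap_{i=1}^n\ker(\mathrm df_i)_p=V_p^{\perp_\omega},\]
where $V_p:=\Span\bigl(\mathbb X_{f_1},\dots,\mathbb X_{f_n}\bigr)_p$. Since $\omega$ is non-degenerate the $\mathbb X_{f_i}$ are themselves linearly independent, so $\dim V_p=n$, and involutivity $\omega(\mathbb X_{f_i},\mathbb X_{f_j})=0$ says $V_p$ is isotropic, hence Lagrangian. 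Equality of symplectic orthogonals then gives $T_p(f^{-1}(c))=V_p$, proving (1).

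For \textbf{(2)}, the crucial step is that involutivity upgrades to commuting flows via the identity $[\mathbb X_{f_i},\mathbb X_{f_j}]=\mathbb X_{\{f_i,f_j\}}=0$. Completeness allows us to assemble the flows $\varphi_i^{t_i}$ into a smooth action $\Phi\colon\R^n\times f^{-1}(c)\to f^{-1}(c)$, $\Phi(t,p)=\varphi_1^{t_1}\circ\cdots\circ\varphi_n^{t_n}(p)$. By part (1) each orbit is open in its connected component, hence coincides with it by connectedness; consequently any connected component $F$ is diffeomorphic to $\R^n/\Lambda$, where $\Lambda=\Stab_{\R^n}(p_0)$ is a closed discrete subgroup of $\R^n$. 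The classification of such subgroups forces $\Lambda\cong\Z^{n-k}$ for some $0\le k\le n$, giving $F\cong\R^k\times T^{n-k}$, and the parameters $t_i$ on $\R^n$ descend to angle coordinates $\theta_i$ on $F$ in which $\mathbb X_{f_i}=\partial/\partial\theta_i$.

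The main obstacle is \textbf{(3)}, which requires producing the action variables globally and checking that they are symplectically conjugate to the angles. My plan is to adapt Duistermaat's construction \cite{duistermaat1980global}: in a tubular neighborhood $U$ of a regular fiber whose base $B:=f(U)\subset\R^n$ is contractible, $\omega|_U=\mathrm d\alpha$ admits a primitive; on the torus directions one sets
\[\psi_j(b):=\frac{1}{2\pi}\int_{\gamma_j(b)}\alpha,\qquad j=1,\dots,n-k,\]
where $\{\gamma_j(b)\}$ is a smoothly varying $\Z$-basis of the period lattice in $F_b$, and along the $\R^k$-directions one simply defines $\psi_j:=f_j$. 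The Poisson commutativity $\{\psi_i,\psi_j\}=0$ and the Darboux identity $\omega=\sum\mathrm d\theta_i\wedge\mathrm d\psi_i$ then follow from the Lagrangian property in (1) via Stokes' theorem. The remaining difficulty is to promote the fiberwise angles to globally smooth functions on $U$: this requires a global Lagrangian section $\sigma\colon B\to U$ of $f$, whose existence is a cohomological question reducing, in the compact--fiber case, to the vanishing of a class in $H^2(B,\R)$; contractibility of $B$ guarantees this vanishing. The extension of this construction to the non-compact fibers $\R^k\times T^{n-k}$ is precisely the content that Section \ref{sec:completelagrangianfibrations} will develop in the framework of complete Lagrangian fibrations.
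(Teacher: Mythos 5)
The paper does not actually prove this theorem: it is imported from \cite{arnol2013mathematical} as a classical result and used as a black box, so there is no in-paper argument to compare yours against. Judged on its own terms, your treatment of (1) and (2) is the standard, essentially complete argument: the identification $T_p\bigl(f^{-1}(c)\bigr)=V_p^{\perp_\omega}$ together with the isotropy of $V_p$ and the dimension count is exactly right, and in (2) the passage from involutivity to commuting flows via $[\mathbb X_{f_i},\mathbb X_{f_j}]=\mathbb X_{\{f_i,f_j\}}=0$, the locally free $\R^n$-action, the open-orbit argument, and the classification of closed discrete subgroups of $\R^n$ correctly give $F\cong\R^k\times T^{n-k}$ with linear flows.

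Part (3) is where the write-up stops being a proof and becomes a plan. The period-integral definition of the $\psi_j$ is the right idea, but every step that makes it work is asserted rather than verified: that $[\omega|_U]=0$ so a primitive $\alpha$ exists (this genuinely needs the Lagrangian condition on the fibers, since $H^2(B\times F)\cong H^2(T^{n-k})\neq 0$ once $n-k\ge 2$); that $\mathrm d\psi_j$ is independent of the choices of $\alpha$ and of the representative cycles $\gamma_j(b)$; that a smoothly varying $\Z$-basis of the period lattice exists; and the actual Stokes computation yielding $\omega=\sum_i\mathrm d\theta_i\wedge\mathrm d\psi_i$. Moreover your construction is local around one fiber, whereas item (3) as stated claims a symplectomorphism of all of $(M,\omega)$ with the model; that stronger, semi-global statement requires precisely the global-section and triviality hypotheses (contractible base, complete fibration, Corollary \ref{cor:lagrangianglobalsection}) that the paper develops afterwards in Section \ref{sec:completelagrangianfibrations}, so pointing forward to that section is consistent with the paper's logic but leaves item (3) unproved as written.
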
\begin{remark}\label{rem:lagrangianfibration} From a geometric point of view, regular level sets $f^{-1}(c)$ being lagrangian submanifolds implies that, in a neighborhood of a regular value, the map $f:M\to\R^n$ is a \emph{lagrangian fibration}, i.e. it is locally trivial and its fibers are lagrangian submanifolds. For more details see \cite{da2008lectures}. \end{remark}On the other hand, one of the main issue of this result is that the action coordinates $\psi_1,\dots,\psi_n$ are, in general, not the given integral of motions, since $\theta_1,f_1,\dots,\theta_n,f_n$ may not form a global Darboux chart for $\omega.$ As we will see in Section \ref{sec:4.2}, this problem can be solved for a particular class of Hamiltonian systems, into which $\deft$ falls.
\subsection{Complete Lagrangian fibrations}\label{sec:4.3}
%This section and the next one are the most technical parts of the paper. All the theory and the results developed from now on are needed to the proof of Theorem \ref{teo:B}. As explained in Section \ref{sec:1.3}, the main issue in our case comes from the fact that each fiber of $H:(\deft,\ome_f)\to B$ is neither compact nor simply-connected.\\ \\
Throughout this section a Lagrangian fibration is a triple $(\pi, M,B)$, where $(M,\omega)$ is a symplectic manifold, $B$ is an open subset of $\R^n$ contained in the set of regular values of $\pi$, the map $\pi:(M,\omega)\to B$ is a smooth surjective submersion and for each $b\in B$ the sumbmanifold $\pi^{-1}(b)$ is Lagrangian in $(M,\omega)$. \\ \\ Let $\pi:(M,\omega)\to B$ be a Lagrangian fibration and let $\alpha:B\to T^*B$ be a $1$-form. Define a vector field $X_{\pi^*\alpha}\in\Gamma(TM)$ by setting \begin{equation}\label{vecfield}
    \omega(X_{\pi^*\alpha},\cdot)=\pi^*\alpha \ .
\end{equation}
\begin{proposition}\label{prop:vecfieldvertical}
For all $\alpha,\beta\in\Gamma(T^*B)$ and for all $f\in C^\infty(B)$ we have:\begin{itemize}
    \item[(i)] $X_{\pi^*(\alpha+\beta)}=X_{\pi^*\alpha}+X_{\pi^*\beta}$
    \item[(ii)] $X_{\pi^*(f\alpha)}=(\pi^*f)X_{\pi^*\alpha}$
    \item[(iii)] $X_{\pi^*\alpha}\in\Ker\pi_*$
    \item[(iv)] $\big[X_{\pi^*\alpha},X_{\pi^*\beta}\big]=0$
\end{itemize}
\end{proposition}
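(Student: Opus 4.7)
My plan is to split the four identities by difficulty: (i) and (ii) are formal consequences of linearity plus non-degeneracy of $\omega$, (iii) encodes the Lagrangian hypothesis on the fibres, and (iv) follows by a short Cartan-calculus argument that uses (iii) as its essential input.

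For (i) and (ii): since $\pi^{*}(\alpha+\beta)=\pi^{*}\alpha+\pi^{*}\beta$ and $\pi^{*}(f\alpha)=(\pi^{*}f)\,\pi^{*}\alpha$, both sides of each identity satisfy the defining equation \eqref{vecfield} for one and the same $1$-form on $M$; non-degeneracy of $\omega$ forces uniqueness of such a vector field, from which the equalities follow.

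For (iii): I would fix $p\in M$, set $F=\pi^{-1}(\pi(p))$, and show that $X_{\pi^{*}\alpha}(p)\in T_{p}F$. For any $V\in T_{p}F$ one has $\pi_{*}V=0$, hence $\omega(X_{\pi^{*}\alpha},V)=(\pi^{*}\alpha)(V)=\alpha(\pi_{*}V)=0$. Therefore $X_{\pi^{*}\alpha}(p)\in (T_{p}F)^{\omega}$, and since $F$ is Lagrangian $(T_{p}F)^{\omega}=T_{p}F$, giving $X_{\pi^{*}\alpha}\in\Ker\pi_{*}$.

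For (iv), which I expect to be the actual content of the proposition, I would use the well-known identity $\iota_{[X,Y]}\omega=\mathcal{L}_{X}\iota_{Y}\omega-\iota_{Y}\mathcal{L}_{X}\omega$ with $X=X_{\pi^{*}\alpha}$ and $Y=X_{\pi^{*}\beta}$. Since $d\omega=0$, Cartan's magic formula gives $\mathcal{L}_{X}\omega=d\iota_{X}\omega=d\pi^{*}\alpha=\pi^{*}d\alpha$, so the second term becomes $\iota_{Y}\pi^{*}d\alpha$; evaluated on any $V$ it equals $d\alpha(\pi_{*}Y,\pi_{*}V)$, which vanishes by (iii). For the first term, expand $\mathcal{L}_{X}(\pi^{*}\beta)=d\bigl((\pi^{*}\beta)(X)\bigr)+\iota_{X}\pi^{*}d\beta$; the function $(\pi^{*}\beta)(X)=\beta(\pi_{*}X)$ is identically zero by (iii), and $\iota_{X}\pi^{*}d\beta$ vanishes by the same verticality argument applied to $X$. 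Hence $\iota_{[X,Y]}\omega=0$, and non-degeneracy of $\omega$ yields $[X_{\pi^{*}\alpha},X_{\pi^{*}\beta}]=0$. The only subtlety is keeping careful track of both contractions simultaneously and making sure (iii) is invoked on both factors; once that is in hand, the calculation is mechanical.
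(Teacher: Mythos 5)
Your proof is correct, and for parts (iii) and (iv) it takes a genuinely different route from the paper. For (iii) the paper first reduces, via local coordinates and parts (i)--(ii), to the case of exact forms $X_{\pi^*\mathrm df}$ before running the orthogonality argument $(\Ker\pi_*)^{\perp^\omega}=\Ker\pi_*$; your version applies that same orthogonality argument directly to an arbitrary $\alpha$ via $\omega(X_{\pi^*\alpha},V)=\alpha(\pi_*V)=0$, which is shorter and makes the reduction unnecessary. The real divergence is in (iv): the paper again reduces to exact forms $\alpha=\sum\alpha_i\,\mathrm dq^i$, uses (iii) to kill the derivative terms in the bracket expansion, and then concludes from the fact that $f\mapsto X_{\mathrm df}$ is a Lie algebra homomorphism for the Poisson bracket together with $\pi$ being a Poisson morphism onto $B$ with the zero bracket, so that $[X_{\pi^*\mathrm df},X_{\pi^*\mathrm dg}]=X_{\mathrm d\{\pi^*f,\pi^*g\}_\omega}=0$. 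You instead compute $\iota_{[X,Y]}\omega=\mathcal L_X\iota_Y\omega-\iota_Y\mathcal L_X\omega$ globally, observe that $\mathcal L_X\omega=\pi^*\mathrm d\alpha$ by Cartan's formula and that every term dies upon insertion of a vertical vector by (iii), and finish by non-degeneracy. Your argument is coordinate-free, avoids the reduction to exact forms entirely, and needs no Poisson-geometric input; what the paper's route buys is the explicit link to the Poisson structure and the vanishing bracket $\{\pi^*f,\pi^*g\}_\omega=0$, a viewpoint that recurs in the integrable-systems context of the rest of the section. Both proofs are complete and correct.
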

\begin{proof}
Properties (i) and (ii) follow directly form the defining equation (\ref{vecfield}). Let $q^1,\dots,q^n$ be local coordinates on $V\subset B$ such that $\alpha=\displaystyle\sum_{i=1}^n\alpha_i\mathrm dq^i$ for some functions $\alpha_i:V\to\R, i=1,\dots,n$. Then, by properties (i) and (ii) $$X_{\pi^*\alpha}=\sum_{i=1}^n(\pi^*\alpha_i)X_{\pi^*\mathrm dq^i} \ .$$Since condition (iii) is pointwise, it suffices to prove that for all functions $f\in C^\infty(V)$ we get $X_{\pi^*\mathrm df}\in\Ker\pi_*$. Let $f$ be such a function and $Y\in\Ker\pi_*$. Since $\pi^*f$ is constant along the fibre of $\pi:M\to B$, it follows $Y(\pi^*f)=0$. On the other hand, $$0=Y(\pi^*f)=\big(\pi^*(\mathrm df)\big)(Y)=\omega(X_{\pi^*\mathrm df},Y)$$ by definition of $X_{\pi^*\mathrm df}$. Since the last equality holds for all vertical fields $Y$, we have $X_{\pi^*\mathrm df}\in(\Ker\pi_*)^{\perp^\omega}$. The map $\pi$ defines a Lagrangian fibration, which implies that $(\Ker\pi_*)^{\perp^\omega}=\Ker\pi_*$ and property (iii) follows.\newline For the last property, let $\alpha,\beta\in\Gamma(T^*B)$ and locally write $$\alpha=\sum_{i=1}^n\alpha_i\mathrm dq^i,\qquad\beta=\sum_{j=1}^n\beta_i\mathrm dq^i$$for smooth functions $\alpha_i,\beta_j$. Then \begin{align*}
    [X_{\pi^*\alpha},X_{\pi^*\beta}]&=\sum_{i,j=1}^n[(\pi^*\alpha_i)X_{\pi^*\mathrm dq^i},(\pi^*\beta_j)X_{\pi^*\mathrm dq^j}]=\sum_{i,j=1}^n\bigg((\pi^*\alpha_i)(\pi^*\beta_j)[X_{\pi^*\mathrm dq^i},X_{\pi^*\mathrm dq^j}] \\ &+(\pi^*\alpha_i)\big(X_{\pi^*\mathrm dq^i}(\pi^*\beta_j)\big)X_{\pi^*\mathrm dq^j}-(\pi^*\beta_j)\big(X_{\pi^*\mathrm dq^j}(\pi^*\alpha_i)\big)X_{\pi^*\mathrm dq^i}\bigg) \\ &=\sum_{i,j=1}^n(\pi^*\alpha_i)(\pi^*\beta_j)[X_{\pi^*\mathrm dq^i},X_{\pi^*\mathrm dq^j}]
\end{align*}where the first equality follows from properties (i)-(ii) and the the third one from property (iii). Thus, it suffices to show that for any $f,g\in C^\infty(B)$ one has $[X_{\pi^*\mathrm df},X_{\pi^*\mathrm dg}]=0$. Notice that the homomorphism \begin{align*}
    C^\infty(M)&\to\Gamma(TM) \\& f\mapsto X_{\mathrm df}
\end{align*}is a Lie algebra homomorphism with respect to the Poisson bracket $\{\cdot,\cdot\}_\omega$ and the Lie bracket $[\cdot,\cdot]$, where $\{f,g\}_\omega:=\omega(X_{\mathrm df},X_{\mathrm dg})$. In particular, for each $f,g\in C^\infty(B)$ $$[X_{\pi^*\mathrm df},X_{\pi^*\mathrm dg}]=X_{\mathrm d\{\pi^*f,\pi^*g\}_\omega}=0 \ .$$ The second equality follows from the fact that $\pi:(M,\{\cdot,\cdot\}_\omega)\to(B,0)$ is a Poisson morphism (see \cite{vaisman1994lectures}).
\end{proof}
\begin{definition}\label{def:completefibration}
A Lagrangian fibration $\pi:(M,\omega)\to B$ is \emph{complete} if for each compactly supported $1$-form $\alpha$ on $B$, the vector field $X_{\pi^*\alpha}$ defined by (\ref{vecfield}) is complete.
\end{definition}
Recall that a Lagrangian fibration is naturally associated with a completely integrable Hamiltonian system (see Remark \ref{rem:lagrangianfibration}). The next Proposition explains why the previous hypothesis of completeness on a Lagrangian fibration is on the one hand interesting from the point of view of geometry and on the other not too restrictive.
\begin{proposition}\label{prop:completefibration}
Let $\pi:(M,\omega)\to B$ be a Lagrangian fibration associated with a completely integrable Hamiltonian system $(M,H,\omega)$ with integral of motions given by $f_1=H,f_2,\dots,f_n$. If the Hamiltonian vector fields $\mathbb X_{f_1},\dots,\mathbb X_{f_n}$ are complete on $\pi^{-1}(b)$ for each $b\in B$, then the Lagrangian fibration $\pi:(M,\omega)\to B$ is complete.
\end{proposition}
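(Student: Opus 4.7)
The plan is to reduce completeness of $X_{\pi^*\alpha}$ to the assumed completeness of the Hamiltonian vector fields $\mathbb{X}_{f_i}$ along individual fibers, by writing $X_{\pi^*\alpha}$ as an explicit combination of the $\mathbb{X}_{f_i}$ whose coefficients become constant once restricted to a single fiber. The crucial identification is that, since $\pi = (f_1, \ldots, f_n) : M \to B \subset \mathbb{R}^n$, if $q^1, \ldots, q^n$ denote the standard coordinates on $\mathbb{R}^n$ then $\pi^* q^i = f_i$, hence $\pi^* dq^i = df_i$, and comparing the defining relations $\omega(X_{\pi^*dq^i},\cdot) = df_i = \omega(\mathbb{X}_{f_i},\cdot)$ together with non-degeneracy of $\omega$ gives $X_{\pi^*dq^i} = \mathbb{X}_{f_i}$ globally on $M$.

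Given any compactly supported $1$-form $\alpha = \sum_{i=1}^n \alpha_i\, dq^i$ on $B$, I would then invoke parts (i)--(ii) of Proposition \ref{prop:vecfieldvertical} to obtain
$$X_{\pi^*\alpha} = \sum_{i=1}^n (\pi^*\alpha_i)\, \mathbb{X}_{f_i}.$$
Fix $p \in M$ and set $F := \pi^{-1}(\pi(p))$. By part (iii) of the same proposition $X_{\pi^*\alpha}$ is vertical, so any maximal integral curve through $p$ is contained in $F$. On $F$ the pullbacks $\pi^*\alpha_i$ are constants, equal to $\alpha_i(\pi(p))$, so $X_{\pi^*\alpha}|_F$ is a constant linear combination of the vector fields $\mathbb{X}_{f_i}|_F$.

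The $\mathbb{X}_{f_i}|_F$ are complete by hypothesis, and they pairwise commute because $[\mathbb{X}_{f_i}, \mathbb{X}_{f_j}] = \mathbb{X}_{\{f_i, f_j\}_\omega} = 0$ by involutivity (this is the same Lie algebra morphism argument used at the end of the proof of Proposition \ref{prop:vecfieldvertical}(iv)). A constant linear combination of complete, pairwise commuting vector fields is itself complete, since its flow is obtained by composing the time-rescaled flows of the individual summands; therefore the integral curve of $X_{\pi^*\alpha}$ through $p$ extends to all $t \in \mathbb{R}$, and since $p$ was arbitrary, $X_{\pi^*\alpha}$ is complete on $M$. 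There is no serious obstacle: once the identification $X_{\pi^*dq^i} = \mathbb{X}_{f_i}$ is in place, the proof is just a clean reduction using facts already established in the excerpt, and in fact the compact support hypothesis on $\alpha$ plays no essential role in the argument.
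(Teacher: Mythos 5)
Your proposal is correct and follows the same core decomposition as the paper: both write $X_{\pi^*\alpha}=\sum_i(\pi^*\alpha_i)\,\mathbb X_{f_i}$ (the paper phrases this via $1$-forms $\alpha_i$ with $\pi^*\alpha_i=\mathrm df_i$, which are exactly your $\mathrm dq^i$, and spends a few lines verifying their pointwise independence, which your coordinate choice makes automatic), and both finish by summing pairwise commuting complete vector fields. The one genuine divergence is how each summand $(\pi^*\alpha_i)\,\mathbb X_{f_i}$ is shown to be complete. The paper uses the compact support of $\alpha$ to conclude that the coefficients $\pi^*\alpha_i$ are bounded and then invokes the Escape Lemma. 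You instead use verticality of $X_{\pi^*\alpha}$ to confine its integral curves to a single fibre $F=\pi^{-1}(\pi(p))$, on which the coefficients $\pi^*\alpha_i$ are literally constant, so that $X_{\pi^*\alpha}|_F$ is a constant linear combination of the complete commuting fields $\mathbb X_{f_i}|_F$. Your route is slightly cleaner and, as you observe, establishes the stronger statement that $X_{\pi^*\alpha}$ is complete for \emph{every} $1$-form $\alpha$ on $B$, not just compactly supported ones; the paper's route stays closer to the letter of Definition \ref{def:completefibration} and works even if one only knows completeness of the $\mathbb X_{f_i}$ together with boundedness of the coefficients, without appealing to constancy along fibres. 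Both arguments rely on the same final ingredient, namely that a sum of commuting complete vector fields is complete, which you justify correctly via the commuting flows.
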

\begin{proof}
Since the Hamiltonian vector fields are vertical and linearly independent, they point-wise generate the tangent space to each fibre $\pi^{-1}(b)$. Moreover, there exist $1$-forms $\alpha_i:B\to T^*B$ such that $$\omega(\mathbb X_{f_i},\cdot)=\pi^*\alpha_i,\qquad i=1,\dots,n$$ where $\omega(\mathbb X_{f_i},\cdot)=\mathrm df_i$ by (\ref{hamiltonianfield}). The $1$-forms $\alpha_i$ are point-wise linearly independent on $B$, indeed if $$a_1\alpha_1+\cdots+a_n\alpha_n=0,\ \text{for some} \ a_1,\dots,a_n\in C^\infty(B)$$ then the above sum is still equal to zero after taking the pullback via $\pi$. By using the defining property of the $\alpha_i$'s and the independence property of $f_1,\dots,f_n$ we get $(\pi^*a_1)(m)=\dots=(\pi^*a_n)(m)=0, \ \forall m\in M$, i.e. the functions $a_1,\dots,a_n$ are zero on the whole set $B$. Let $\alpha:U\subset B\to T^*U$ be a locally defined compactly supported $1$-form. We need to prove that the vector field $X_{\pi^*\alpha}$, defined by (\ref{vecfield}), is complete. By the above argument, there exist $n$ functions $g_1,\dots,g_n$ on $U$ such that $$\alpha=\sum_{i=1}^ng_i\alpha_i \ .$$ Since $\alpha$ has compact support on $U$, the functions $g_i$ have compact support on the same set as well. In particular, they are bounded on $U$. By properties (i) an (ii) of Proposition \ref{prop:vecfieldvertical} it follows that $$X_{\pi^*\alpha}=\sum_{i=1}^n(\pi^*g_i)X_{\pi^*\alpha_i}=\sum_{i=1}^n(\pi^*g_i)\mathbb X_{f_i} \ .$$ The vector field $X_{\pi^*(g_j\alpha_j)}=(\pi^*g_j)\mathbb X_{f_j}$ is complete for all $j=1,\dots,n$ by an application of the so-called "Escape Lemma" (see \cite{lee2013smooth}, Lemma 9.19), indeed the function $\pi^*g_j$ is bounded and $\mathbb X_{f_j}$ is complete. Moreover, since $[X_{\pi^*(g_i\alpha_i)},X_{\pi^*(g_j\alpha_j)}]=0$ for all $i,j=1,\dots,n$ by property (iv) of Proposition \ref{prop:vecfieldvertical}, it follows that $X_{\pi^*(g_i\alpha_i)}+X_{\pi^*(g_j\alpha_j)}$ defines a new complete vector field as it is the sum of two commuting complete vector fields. Applying in an iterative way the previous observation, we deduce that $X_{\pi^{*}\alpha}$ is complete, as well. %$$\widetilde{X}:=\displaystyle\sum_{i=1}^{n-1}(\pi^*g_i)X_{\pi^*\alpha_i}=\sum_{i=1}^{n-1}(\pi^*g_i)\mathbb X_{f_i}$$is complete. In the end, since $[\widetilde{X},X_{\pi^*(g_n\alpha_n)}]=0$ for the same reason as before, the vector field $$X_{\pi^*\alpha}=\widetilde X +X_{\pi^*(g_n\alpha_n)}$$ can be written as the sum of two commuting complete vector fields. Its flow is given by the composition of the flows associated with $\widetilde{X}$ and $X_{\pi^*(g_n\alpha_n)}$, thus it is complete.
\end{proof}
From now on, all Lagrangian fibrations will be complete.\newline Let $\alpha:U\to T^*U$ be a compactly supported locally defined $1$-form on the base and let $\phi_\alpha^t:\pi^{-1}(U)\to\pi^{-1}(U)$ be the flow of the vector field $X_{\pi^*\alpha}$ defined for all $t\in\R$. Since $X_{\pi^*\alpha}$ is vertical, its flow $\phi_\alpha^t$ lies along the fibres of $\pi:(M,\omega)\to B$ for all $t\in\R$. Furthermore, for each $\alpha_b\in T^*B$ there exists a compactly supported locally defined $1$-form $\alpha:U\to T^*U$ such that $\alpha(b)=\alpha_b$ and the value of $X_{\pi^*\alpha}$ at a point $m\in M$ only depends on $\alpha_b$ and not on the choice of $\alpha$. Therefore, for each $\alpha_b\in T^*B$ there is a well-defined diffeomorphism $$\phi_{\alpha_b}^1:=\phi_\alpha^1{|_{\pi^{-1}(b)}}:\pi^{-1}(b)\to\pi^{-1}(b)$$ where $\alpha\in\Gamma(T^*U)$ is a compactly supported form such that $\alpha(b)=\alpha_b$. In particular, for each $b\in B$ the map \begin{equation}\begin{aligned}\label{actionfibres}
    T^*_bB&\to\Diff(\pi^{-1}(b)) \\&\alpha_b\mapsto\phi_{\alpha_b}^1
\end{aligned}\end{equation}is a Lie group homomorphism, where $T^*_bB$ has the structure of an abelian Lie group with respect to the sum of covectors. In other words, for each $b\in B$, the map in ($\ref{actionfibres}$) defines a transitive action of $T^*_bB$ on $\pi^{-1}(b)$. In general, this action is not free and for instance one can consider the associated isotropy group, namely $$\Lambda_b:=\{\alpha_b\in T^*_bB \ | \ \phi_{\alpha_b}^1(m)=m, \ \forall m\in\pi^{-1}(b)\}$$ known as the \emph{period lattice}. It can be proved that it is a discrete subgroup of $T^*_bB$ isomorphic to $\mathbb Z^k$, with $k=1,\dots,n$ (see \cite{duistermaat1980global} for the case $k=n$ or \cite{fiorani2002liouville} in general). \begin{remark}
In the case $M=T^*B$ and $\omega=\Omega_{\text{can}}$, the transitive action is simply given by the sum of covectors and $\Lambda_b=0$ for each $b\in B$.
\end{remark}\begin{definition}[\cite{vaisman1994lectures}]\label{def:periodnet}
The subset $$\Lambda:=\bigcup_{b\in B}\Lambda_b\subset T^*B$$ is called the \emph{period net} associated with the complete Lagrangian fibration $\pi:(M,\omega)\to B$.
\end{definition}
\begin{lemma}\label{lem:pullback}
Let $\pi:(M,\omega)\to B$ be a complete Lagrangian fibration and let $\alpha:U\to T^*U$ be a locally defined $1$-form. Then, \begin{equation}
    \big(\phi_\alpha^1\big)^*\omega-\omega=\pi^*\mathrm d\alpha
\end{equation}
\end{lemma}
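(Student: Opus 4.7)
The plan is to differentiate the pulled-back symplectic form along the flow of $X_{\pi^*\alpha}$ and integrate. More precisely, for $t\in\R$ (or at least for $t$ in a neighborhood of $[0,1]$ along trajectories that exist that long, by completeness of the fibration together with a compactly-supported extension of $\alpha$ if needed) let $\phi_\alpha^t$ be the flow of $X_{\pi^*\alpha}$. By Cartan's magic formula and the closedness of $\omega$,
\begin{equation*}
\frac{\mathrm d}{\mathrm dt}\big(\phi_\alpha^t\big)^*\omega
=\big(\phi_\alpha^t\big)^*\mathcal L_{X_{\pi^*\alpha}}\omega
=\big(\phi_\alpha^t\big)^*\bigl(\mathrm d\,\iota_{X_{\pi^*\alpha}}\omega+\iota_{X_{\pi^*\alpha}}\mathrm d\omega\bigr)
=\big(\phi_\alpha^t\big)^*\mathrm d(\pi^*\alpha),
\end{equation*}
where in the last step I use the defining equation $\iota_{X_{\pi^*\alpha}}\omega=\pi^*\alpha$ from \eqref{vecfield} and $\mathrm d\omega=0$.

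Next I would use the verticality of $X_{\pi^*\alpha}$, which is property (iii) of Proposition~\ref{prop:vecfieldvertical}. Since $X_{\pi^*\alpha}\in\Ker\pi_*$, its flow preserves the fibres of $\pi$, so $\pi\circ\phi_\alpha^t=\pi$ for all $t$ where the flow is defined. Consequently $\big(\phi_\alpha^t\big)^*\pi^*=\pi^*$ as operators on forms on $B$, and the derivative above simplifies to
\begin{equation*}
\frac{\mathrm d}{\mathrm dt}\big(\phi_\alpha^t\big)^*\omega=\pi^*\mathrm d\alpha,
\end{equation*}
which is \emph{independent of $t$}. Integrating from $t=0$ to $t=1$ and using $\phi_\alpha^0=\Id$,
\begin{equation*}
\big(\phi_\alpha^1\big)^*\omega-\omega=\int_0^1\pi^*\mathrm d\alpha\,\mathrm dt=\pi^*\mathrm d\alpha,
\end{equation*}
which is the desired identity.

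There is essentially no obstacle beyond ensuring the flow is defined up to time $1$; this is exactly what completeness of the Lagrangian fibration guarantees once $\alpha$ is (or is extended to) a compactly supported $1$-form on $U$, and for a general locally defined $\alpha$ the identity is a pointwise statement that can be obtained by such a local extension near each point of $U$ (the value of $X_{\pi^*\alpha}$, and hence the identity, depends only on $\alpha$ pointwise). The substantive content of the proof is the interplay between Cartan's formula, the closedness of $\omega$, and the verticality furnished by Proposition~\ref{prop:vecfieldvertical}.
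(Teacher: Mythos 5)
Your proof is correct and follows essentially the same route as the paper's: the fundamental theorem of calculus applied to $t\mapsto(\phi_\alpha^t)^*\omega$, Cartan's magic formula together with $\mathrm d\omega=0$ and the defining relation $\iota_{X_{\pi^*\alpha}}\omega=\pi^*\alpha$, and the identity $\pi\circ\phi_\alpha^t=\pi$ coming from verticality. Your added remarks on extending $\alpha$ to ensure the time-$1$ flow exists are a welcome precision that the paper leaves implicit.
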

\begin{proof}
    The proof relies on the following computation \begin{align*}
        \big(\phi_\alpha^1\big)^*\omega-\omega&=\int_0^1\frac{\mathrm d}{\mathrm dt}\big(\phi_\alpha^t\big)^*\omega\mathrm dt \\ &=\int_0^1\big(\phi_\alpha^t\big)^*\big(\mathcal L_{X_{\pi^*\alpha}}\omega\big)\mathrm dt \\ &=\int_0^1\big(\phi_\alpha^t\big)^*\mathrm d\big(\omega(X_{\pi^*\alpha},\cdot)\big)\mathrm dt \tag{Cartan's magic formula} \\ &=\int_0^1\big(\pi\circ\phi_\alpha^t\big)^*\mathrm d\alpha\mathrm d t \tag{Equation (\ref{vecfield})} \\ &=\int_0^1\pi^*\mathrm d\alpha\mathrm d t=\pi^*\mathrm d\alpha \ . \tag{$\pi\circ\phi_\alpha^t=\pi$, for all $t$} 
    \end{align*}
\end{proof}

\begin{theorem}[\cite{sepenotes}]\label{teo:smoothmanifold}
Let $\pi:(M,\omega)\to B$ be a complete Lagrangian fibration and let $\Lambda$ be the associated period net. Then, \begin{itemize}
    \item[(1)] $\Lambda$ is a closed Lagrangian submanifold of $T^*B$.
    \item[(2)] The quotient $T^*B/_{\Lambda}$ is a smooth manifold.
\end{itemize}
\end{theorem}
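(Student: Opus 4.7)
The plan is to prove part (1) by constructing, through every point $\lambda_0\in\Lambda_{b_0}$, a smooth local section $\sigma$ of the cotangent bundle whose image lies entirely in $\Lambda$; both the submanifold structure and the Lagrangian property will then drop out of the observation that any such section is necessarily a closed $1$-form. Closedness of $\Lambda$ in $T^*B$ will be a direct continuity argument, and for part (2) I will verify that the fibrewise translation action of $\Lambda$ on $T^*B$ is free and properly discontinuous and then invoke the standard quotient theorem.

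For the existence of local sections I would fix any smooth extension $\tilde\alpha\in\Gamma(T^*U)$ with $\tilde\alpha(b_0)=\lambda_0$ and a smooth local section $s\colon U\to M$ of $\pi$, and consider the map $F(b,\alpha):=\phi^1_\alpha(s(b))$, which is well defined thanks to completeness of the fibration. Restricted to the fibre $T^*_bB$ this is the evaluation at $s(b)$ of the transitive $T^*_bB$-action on $\pi^{-1}(b)$ from \eqref{actionfibres}, and its differential at the origin is the linear map $\dot\alpha\mapsto X_{\pi^*\dot\alpha}|_{s(b)}$, which is an isomorphism onto $T_{s(b)}\pi^{-1}(b)$ because the Lagrangian condition makes $\omega$ a nondegenerate pairing between vertical vectors and the pullback of covectors from the base. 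The implicit function theorem therefore supplies a smooth $\tilde\beta\in\Gamma(T^*U)$ with $\tilde\beta(b_0)=0$ such that $F\bigl(b,\tilde\alpha(b)-\tilde\beta(b)\bigr)=s(b)$ for every $b$ near $b_0$, i.e.\ the covector $\tilde\alpha(b)-\tilde\beta(b)$ stabilises the point $s(b)$. Since the $T^*_bB$-action is abelian and transitive on $\pi^{-1}(b)$, the stabiliser of a single point coincides with the stabiliser of every point, which is by definition $\Lambda_b$; hence $\sigma:=\tilde\alpha-\tilde\beta$ is the desired local section of $\Lambda$.

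With $\sigma$ at hand, Lemma \ref{lem:pullback} gives $\pi^*\mathrm d\sigma=(\phi^1_\sigma)^*\omega-\omega=0$, and since $\pi$ is a surjective submersion this forces $\mathrm d\sigma=0$. The graph of a closed $1$-form is an $n$-dimensional Lagrangian submanifold of $T^*B$, so $\Lambda$ is locally the disjoint union of such graphs indexed by the discrete lattice $\Lambda_{b_0}\cong\mathbb Z^k$; this simultaneously produces the smooth submanifold structure and the Lagrangian condition. Closedness in $T^*B$ is then immediate: if $\alpha_j\in\Lambda_{b_j}$ converges to $\alpha\in T^*_{b_0}B$, continuity of $\alpha\mapsto\phi^1_\alpha$ together with the (abelian, transitive) action of $T^*_{b_0}B$ on $\pi^{-1}(b_0)$ forces $\phi^1_\alpha$ to fix at least one point of $\pi^{-1}(b_0)$ and therefore every point, so $\alpha\in\Lambda_{b_0}$.

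For part (2), fibrewise addition defines a free action of $\Lambda$ on $T^*B$, and the local trivialisation $\Lambda\cap T^*U\cong U\times\mathbb Z^k$ provided by a basis of the smooth local sections constructed above ensures that every point of $T^*B$ has a neighbourhood meeting only finitely many of its $\Lambda$-translates, i.e.\ the action is properly discontinuous; the standard quotient theorem for smooth manifolds then furnishes the smooth structure on $T^*B/\Lambda$. I expect the genuine obstacle to lie entirely in the construction of the local sections, because that is the only step which truly uses completeness of the Lagrangian fibration and requires carefully identifying the derivative of the group action with the musical isomorphism induced by $\omega$; once smooth local sections exist, everything else is essentially formal.
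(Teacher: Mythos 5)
Your construction of the local sections of $\Lambda$ is essentially the paper's argument in a different dress: the map $F(b,\alpha)=\phi^1_\alpha(s(b))$ is exactly the paper's $\psi_\sigma$ read through the projection $p$, and your implicit-function-theorem step, resting on the fact that $\dot\alpha\mapsto X_{\pi^*\dot\alpha}|_{s(b)}$ is an isomorphism onto the vertical tangent space by nondegeneracy of $\omega$ on $\Ker\pi_*$ versus $\pi^*T^*B$, is the same computation the paper uses to show $\Ker \mathrm d\psi_\sigma=\{0\}$. The derivation of closedness of the section from Lemma \ref{lem:pullback}, the sequential argument for closedness of $\Lambda$, and the identification of all stabilisers of a transitive abelian action are likewise as in the paper. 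The one genuine gap is the sentence ``so $\Lambda$ is locally the disjoint union of such graphs'': what you have proved is that $\Lambda$ \emph{contains} the graph of $\sigma$ near $\lambda_0$, not that it \emph{equals} $\graph(\sigma|_U)$ in a neighbourhood of $\lambda_0$ in $T^*B$, and without that reverse inclusion you cannot conclude that $\Lambda$ is an embedded submanifold (nor, a priori, rule out accumulation of lattice points). The paper devotes a separate step to exactly this point (showing $W\cap\Lambda\subset\alpha_\sigma(U)$). Your own machinery closes the gap in one line, and you should state it: if $\beta\in\Lambda$ lies in the neighbourhood where $F$ is injective on each fibre, then $F(\beta)=\phi^1_\beta\big(s(p(\beta))\big)=s(p(\beta))=F\big(\sigma(p(\beta))\big)$, whence $\beta=\sigma(p(\beta))$.

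For part (2) you genuinely depart from the paper, which invokes the theorem that a quotient by an equivalence relation whose graph is a closed submanifold of $N\times N$ is a smooth manifold, and verifies this for $Q=\{(\alpha,\beta)\;:\;\alpha-\beta\in\Lambda\}$ by rerunning the construction of part (1). Your route --- obtain the local triviality $\Lambda\cap T^*U\cong U\times\mathbb Z^k$ by applying the section construction to a $\mathbb Z$-basis of $\Lambda_{b_0}$, then quotient by a free, properly discontinuous fibrewise translation --- is more elementary and equally valid, provided you make precise that the ``action of $\Lambda$'' is, over each trivialising $U$, an honest action of the discrete group $\mathbb Z^k$ on $T^*U$ by $\alpha\mapsto\alpha+\sum_i n_i\,\sigma_i(p(\alpha))$, and that the resulting local quotients glue over overlaps. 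This buys a more standard covering-space-style argument at the cost of a gluing check that the paper's global equivalence-relation formulation avoids.
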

\begin{proof}Since $\pi:(M,\omega)\to B$ is a surjective submersion, then for each $b\in B$ there exists a local section $\sigma:U\to\pi^{-1}(U)$ defined in an open neighbourhood $U$ containing $b$ (\cite{giachetta1997new}, Proposition 1.2.4). Fix such a section and consider the map \begin{align}\label{psialpha}
    \psi_\sigma:T^*U&\to\pi^{-1}(U)\nonumber \\& \alpha\mapsto\phi^1_\alpha\big(\sigma\circ p(\alpha)\big)
\end{align}where $p:(T^*B,\Omega_{\text{can}})\to B$. We first want to prove that $\psi_\sigma$ is a local diffeomorphism. Since dim$T^*U$=dim$\pi^{-1}(U)$ it suffices to prove that $\Ker \mathrm d_\alpha\psi_\sigma=\{0\}$ for all $\alpha\in T^*U$. Fix an element $\alpha_0\in T^*U$ and notice that if $X\in T_{\alpha_0}T^*U$ is tangent to the fibres of $p$, then $\mathrm d_{\alpha_0}\psi_\sigma(X)=0$ if and only if $X=0$. Therefore, if $\mathrm d_{\alpha_0}\psi_\sigma(Y)=0$ and $Y\neq 0$, then $\mathrm d_{\alpha_0}p(Y)\neq 0$. Any such vector $Y\in T_{\alpha_0}T^*U$ is mapped to a non-zero vector $\widetilde Y\in T_{\psi_\sigma(\alpha_0)}\pi^{-1}(U)$ such that $\mathrm d\pi_{\psi_\sigma(\alpha_0)}(\widetilde Y)\neq 0$, since the action in (\ref{actionfibres}) preserves the fibre of $\pi:(M\,\omega)\to B$ and $\sigma$ is an immersion. This is not possible as the vector field $\widetilde Y$ is vertical with respect to $\pi$. Hence, $\psi_\sigma$ is a local diffeomorphism. Now let $b_0\in U$ and $\alpha_0\in\Lambda_{b_0}$. By definition $\psi_\sigma(\alpha_0)=(\sigma\circ p)(\alpha_0)$. The map $\psi_\sigma$ is a local diffeomorphism, hence there exists an inverse $\psi_\sigma^{-1}$ defined on an open neighbourhood $V\subset\pi^{-1}(U)$ of $(\sigma\circ p)(\alpha_0)$. By shrinking $U$ if needed, we may assume that $U=\pi(V)$. The composition $$\alpha_\sigma:=\psi_\sigma^{-1}\circ\sigma:U\to T^*U$$ is a locally defined $1$-form, since $p=\pi\circ\psi_\sigma$. In particular, for all $b\in U$ we get $$\sigma(b)=\psi_\sigma\circ\alpha_\sigma(b)=\phi^1_{\alpha_\sigma(b)}\big(\sigma(b)\big)$$ which means that for all $b\in U$, $\alpha_\sigma(b)\in\Lambda_b|_U$. Define $W:=\psi_\sigma^{-1}(V)$ and since $\psi_\sigma^{-1}$ is an open map, $W$ is an open neighbourhood (diffeomorphic to $V$) of $\alpha_\sigma(b)$. In the end, the above argument shows that $\alpha_\sigma(U)\subset W\cap\Lambda$. In order to show that $\Lambda$ is a smooth submanifold of $T^*B$ it suffices to prove that $W\cap\Lambda\subset\alpha_\sigma(U)$, since that would mean that $\Lambda$ is locally given by the graph of the $1$-form $\alpha_\sigma$. Let $\beta\in W\cap\Lambda$, then there exists $m\in V=\psi_\sigma(W)$ such that $$m=\psi_\sigma(\beta)=\phi^1_\beta\big(\sigma\circ p(\beta)\big) \ .$$On the other hand, $\beta\in\Lambda_{p(\beta)}$ implies that for all $\widetilde m\in\pi^{-1}(p(\beta))$, $\phi^1_\beta(\widetilde m)=\widetilde m$. Therefore, $$\phi^1_\beta(\sigma\circ p(\beta))=\sigma\circ p(\beta) \ .$$ Putting all together we get $$\psi_\sigma(\beta)=\sigma\circ p(\beta)$$ and applying $\psi^{-1}_\sigma$ to both sides of the equality $$\beta=\psi^{-1}_\sigma\circ\sigma\circ p(\beta)=\alpha_\sigma\circ p(\beta) \ .$$Thus proving that $\beta\in\alpha_\sigma(U)$. This completes the proof that $\Lambda$ is a smooth submanifold of $T^*B$. In order to show $\Lambda$ is also closed, let $\{\beta_n\}\subset\Lambda$ be a sequence converging to $\beta\in T^*B$. By taking a small enough neighbourhood $\widetilde W$ of $\beta$ in $T^*B$, it is possible to ensure that all but finitely many $\beta_n$ lie in $\widetilde W$ and that there exists a local section $\sigma:\widetilde U:=p(\widetilde W)\subset B\to M$. Again, for all but finitely many $n$, we have $$\psi_\sigma(\beta_n)=\sigma\circ p(\beta_n)$$ since $\beta_n\in\Lambda_{p(\beta_n)}$ for all $n\in \mathbb N$. By continuity of $\psi_\sigma$ the left hand side of the above equation converges to $\psi_\sigma(\beta)$ and by continuity of $\sigma\circ p$ the right hand side to $\sigma\circ p(\beta)$. Therefore, $\psi_\sigma(\beta)=\sigma\circ p(\beta)$ which means that $\beta\in\Lambda$. It only remains to show that $\Lambda$ is Lagrangian in $(T^*B,\Omega_{\text{can}})$. Notice that any locally defined section $\alpha:U\to\Lambda|_U$ of $p|_\Lambda:\Lambda\subset T^*B\to B$ is a closed $1$-form. In fact, for any such $\alpha$ we get $\phi_\alpha^1=\Id$, which implies $\big(\phi_\alpha^1\big)^*\omega=\omega$. By Lemma \ref{lem:pullback} it follows that $\pi^*\mathrm d\alpha=0$. Since $\pi$ is a submersion, we get $\mathrm d\alpha=0$ as required. In the end, the closed submanifold $\Lambda$ is locally given by the image of closed $1$-forms, hence it is Lagrangian in $(T^*B,\Omega_{\text{can}})$. The proof of claim (1) is completed. \newline The proof of (2) relies on the following standard result (see \cite{vaisman1994lectures}): if $N$ is a smooth manifold and $\mathcal R$ is an equivalence relation on $N$ whose graph in $N\times N$ is a closed submanifold, then the quotient $N/_{\mathcal R}$ is a smooth manifold. \newline In our case, two elements $\alpha,\beta\in T^*B$ are equivalent if and only if $\alpha-\beta\in\Lambda$. The proof that $$Q:=\big\{(\alpha,\beta)\in T^*B\times T^*B \ | \ \alpha-\beta\in\Lambda\big\}$$ is a closed submanifold of $T^*B\times T^*B$ can be done in the same way as before. Indeed by repeating the construction above it follows that $Q\cap(W_1\times W_2)=\alpha_{\sigma_1}(U)\times\beta_{\sigma_2}(U)$, where $\sigma_1,\sigma_2:U\subset B\to \pi^{-1}(U)\subset M$ are local sections of $\pi$ and $W_i:=\psi_{\sigma_i}^{-1}(V)$ are the corresponding open neighbourhood of $\alpha_{\sigma_1}(b)$ and $\beta_{\sigma_2}(b)$, for some $b\in U=\pi(V)$.
\end{proof}
\begin{corollary}\label{cor:tildepsi}
A choice of a local section $\sigma:U\subset B\to \pi^{-1}(U)\subset M$ induces a diffeomorphism $$\widetilde\psi_\sigma:T^*U/_{\Lambda}|_U\to\pi^{-1}(U)$$which commutes with the projections onto $U$.
\end{corollary}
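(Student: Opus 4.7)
The plan is to upgrade the local diffeomorphism $\psi_\sigma$ constructed in the proof of Theorem \ref{teo:smoothmanifold} to a well-defined global bijection on the quotient $T^*U/\Lambda|_U$, and then check that it is in fact a diffeomorphism.

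First I would verify that $\psi_\sigma$ is constant on $\Lambda|_U$-orbits, so that it descends to the quotient. The key input is that the map in (\ref{actionfibres}) is a Lie group homomorphism: given $\alpha\in T^*_bU$ and $\lambda\in\Lambda_b$, one has $\phi^1_{\alpha+\lambda}=\phi^1_\alpha\circ\phi^1_\lambda$ on $\pi^{-1}(b)$, and $\phi^1_\lambda$ acts as the identity by definition of $\Lambda_b$. Evaluating at $\sigma(b)$ gives $\psi_\sigma(\alpha+\lambda)=\psi_\sigma(\alpha)$, so $\psi_\sigma$ factors through a well-defined map $\widetilde\psi_\sigma:T^*U/\Lambda|_U\to\pi^{-1}(U)$ that obviously satisfies $\pi\circ\widetilde\psi_\sigma=\bar p$, where $\bar p$ is the projection induced by $p$.

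Next I would establish that $\widetilde\psi_\sigma$ is a bijection fiberwise. Surjectivity comes from the fact that for each $b\in U$ the action of $T^*_bB$ on $\pi^{-1}(b)$ described in (\ref{actionfibres}) is transitive, so every $m\in\pi^{-1}(b)$ is hit by some covector. For injectivity on the quotient, suppose $\psi_\sigma(\alpha)=\psi_\sigma(\beta)$. The projections must coincide, say $p(\alpha)=p(\beta)=b$, and then $\phi^1_{\alpha-\beta}$ fixes $\sigma(b)$. Because the action of $T^*_bB$ is abelian, its stabilizer at one point coincides with its stabilizer at every point: for any $m'\in\pi^{-1}(b)$, write $m'=\phi^1_\gamma(\sigma(b))$ and compute
\begin{equation*}
\phi^1_{\alpha-\beta}(m')=\phi^1_{\alpha-\beta}\circ\phi^1_\gamma(\sigma(b))=\phi^1_\gamma\circ\phi^1_{\alpha-\beta}(\sigma(b))=\phi^1_\gamma(\sigma(b))=m' \ .
\end{equation*}
Hence $\alpha-\beta\in\Lambda_b$, so $\alpha$ and $\beta$ represent the same class in the quotient.

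Finally I would check smoothness. By Theorem \ref{teo:smoothmanifold}(1), $\Lambda$ is a closed Lagrangian submanifold of $T^*B$, and locally it is the graph of a closed $1$-form, so the action of $\Lambda|_U$ on $T^*U$ by fiberwise translation is free and properly discontinuous; consequently the quotient projection $q:T^*U\to T^*U/\Lambda|_U$ is a local diffeomorphism (this is part of the content of Theorem \ref{teo:smoothmanifold}(2)). Since $\psi_\sigma=\widetilde\psi_\sigma\circ q$ with $\psi_\sigma$ and $q$ both local diffeomorphisms, $\widetilde\psi_\sigma$ is a local diffeomorphism, and being a bijection it is a global diffeomorphism. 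The commutation with the projections onto $U$ has already been noted.

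The only non-routine point is the injectivity argument, i.e. identifying the stabilizer of $\sigma(b)$ with the period lattice $\Lambda_b$; once the abelian nature of the action $T^*_bB\to\Diff(\pi^{-1}(b))$ is invoked, the rest is a formal consequence of the structure assembled in Section \ref{sec:4.3}.
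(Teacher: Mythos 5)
Your proof is correct and follows the route the paper leaves implicit: descend the local diffeomorphism $\psi_\sigma$ from the proof of Theorem \ref{teo:smoothmanifold} to the quotient via the homomorphism property of (\ref{actionfibres}), use transitivity for surjectivity and the identification of the stabilizer of $\sigma(b)$ with $\Lambda_b$ (via commutativity) for injectivity, and conclude since a bijective local diffeomorphism is a diffeomorphism. Your explicit treatment of the stabilizer point is the one genuinely non-trivial step and you handle it correctly.
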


\begin{remark}\label{rem:trivialization}
The diffeomorphism $\widetilde \psi_\sigma$ can be thought of as a local trivialization for the Lagrangian fibration $\pi:(M,\omega)\to B$. In particular, it sends the zero section of $T^*U\to U$ to the image of $\sigma$. The main issue of this construction is that a complete Lagrangian fibration $\pi:(M,\omega)\to B$ may not admit a globally defined section and, therefore, there is no natural choice of locally defined sections $\sigma:U\to\pi^{-1}(U)$ to construct the "trivialization" $\widetilde\psi_\sigma$.
\end{remark}
By Theorem \ref{teo:smoothmanifold}, the vertical action of $\Lambda$ (sum of covectors) on the fibres of $T^*B$ induced by a section $\alpha:U\to \Lambda|_U$ is by symplectomorphisms with respect to $\Omega_{\text{can}}$. In particular, this implies that the quotient space $T^*B/_\Lambda$ inherits a symplectic form $\widetilde\omega$ which makes the induced projection $$\widetilde p:\big(T^*B/_\Lambda,\widetilde\omega\big)\to B$$ a complete Lagrangian fibration.
\begin{definition}\label{def:referencesymplectic}
Given a complete Lagrangian fibration $\pi:(M,\omega)\to B$ with period net $\Lambda\subset T^*B$, the complete Lagrangian fibration given by $$\widetilde p:\big(T^*B/_\Lambda,\widetilde\omega\big)\to B$$ is called the \emph{symplectic reference} fibration associated to $\pi:(M,\omega)\to B$.
\end{definition}
\begin{remark}
Any symplectic reference Lagrangian fibration admits a globally defined Lagrangian section, obtained as the image of the zero section $0:B\hookrightarrow T^*B$ inside $T^*B/_\Lambda$. In fact, if $q:(T^*B,\Omega_{\text{can}})\to (T^*B/_\Lambda,\widetilde\omega)$ is the quotient projection such that $q^*\widetilde\omega=\Omega_{\text{can}}$ and $s:=q\circ 0$, then $$s^*\widetilde\omega=0^*(q^*\widetilde\omega)=0^*\Omega_{\text{can}}=0$$ hence $s:B\to (T^*B/_\Lambda,\widetilde\omega)$ is a globally defined Lagrangian section.
\end{remark}

\subsection{The existence of global Lagrangian sections}\label{sec:4.4}
In this last section we briefly explain the topological obstruction for a complete Lagrangian fibration to admit a global section and, more in particular, a global Lagrangian section. The main result shows that if $B$ is a contractible open subset of $\R^n$, then both the aforementioned topological obstructions vanish. \\ \\ Let $\pi:(M,\omega)\to B$ be a complete Lagrangian fibration as in the previous sections and let $U_i,U_j\subset B$ be open subsets such that $U_i\cap U_j\neq\emptyset$. Pick sections $\sigma_i:U_i\to\pi^{-1}(U_i), \sigma_j:U_j\to\pi^{-1}(U_j)$ and construct local trivializations $\widetilde\psi_{\sigma_i},\widetilde\psi_{\sigma_j}$ as in Corollary \ref{cor:tildepsi}. Consider the diffeomorphism $$\widetilde\psi_{\sigma_j}^{-1}\circ\widetilde\psi_{\sigma_i}:T^*\big(U_i\cap U_j\big)/_{\Lambda}|_{U_i\cap U_j}\to T^*\big(U_i\cap U_j\big)/_{\Lambda}|_{U_i\cap U_j}$$which leaves the projection onto $B$ invariant and it sends the zero section to $\widetilde\psi_{\sigma_j}^{-1}(\sigma_i)$ (see Remark \ref{rem:trivialization}). It can be proved (\cite{duistermaat1980global},\cite{dazord1987probleme}) that $\widetilde\psi_{\sigma_j}^{-1}(\sigma_i)$ is the unique section $s_{ji}$ of $T^*\big(U_i\cap U_j\big)/_{\Lambda}|_{U_i\cap U_j}\to U_i\cap U_j$ satisfying $\phi^1_{s_{ji}}(\sigma_j)=\sigma_i$. Fixing a good open cover $\mathcal U=\{U_i\}_{i\in I}$ in the sense of Leary, i.e. all subsets $U_i$ and all finite intersections of these subsets are contractible, the above construction yields locally defined smooth sections $s_{ji}$ for each pair $i,j$ whose respective open sets in $\mathcal U$ intersect non-trivially. By definition, the family $s_{ji}$ defines a \v{C}ech $1$-cocyle for the cohomology of $B$ with coefficients in the sheaf $C^\infty\big(T^*B/_{\Lambda}\big)$ of smooth sections of $T^*B/_{\Lambda}\to B$ and, therefore, a cohomology class $\eta\in H^1\big(B,C^\infty\big(T^*B/_{\Lambda}\big)\big)$. Let $\mathcal F_{\Lambda}$ be the sheaf of smooth sections of $p|_\Lambda:\Lambda\to B$. There is a short exact sequence of sheaves (\cite{duistermaat1980global},\cite{dazord1987probleme}) \begin{equation}\label{shortexact}0\to\mathcal F_\Lambda\to C^\infty(T^*B)\to C^\infty\big(T^*B/_{\Lambda}\big)\to 0\end{equation}where the first map is induced by the inclusion $\Lambda\hookrightarrow T^*B$ and $C^\infty(T^*B)$ is the sheaf of $1$-forms of $B$. It is a standard result that the sheaf $C^\infty(T^*B)$ is \emph{fine}, in particular it is acyclic since $B$ is a paracompact Hausdorff space (\cite{gunning2015lectures}). Then, for all $k\ge 1$ we have $$H^k\big(B,C^\infty(T^*B)\big)\cong\{0\} \ .$$ The long exact sequence in cohomology induced by the short exact sequence in (\ref{shortexact}) induces an isomorphism $$\Phi:H^1(B,C^\infty(T^*B/_\Lambda)\big)\overset{\cong}{\longrightarrow} H^2(B,\mathcal F_\Lambda)$$
\begin{theorem}[\cite{duistermaat1980global},\cite{dazord1987probleme}]\label{teo:globalsection}
The image $\Phi(\eta)=:c_\Lambda\in H^2(B,\mathcal F_\Lambda)$ is called the Chern class associated with the Lagrangian fibration $\pi:(M,\omega)\to B$ and $c_\Lambda=0$ if and only if there exists a globally defined section $\sigma:B\to M$.
\end{theorem}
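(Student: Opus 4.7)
The plan is to read $c_\Lambda$ as an obstruction class in \v{C}ech cohomology and exploit the fact that the isomorphism $\Phi$ translates its vanishing into the vanishing of $\eta\in H^1(B,C^\infty(T^*B/_\Lambda))$, i.e.\ into the cocycle $\{s_{ji}\}$ being a \v{C}ech coboundary. The key algebraic input is that the fiberwise action in (\ref{actionfibres}) is a group homomorphism with kernel exactly $\Lambda_b$, so composition of the flows $\phi^1_\alpha$ obeys $\phi^1_{\alpha+\beta}=\phi^1_\alpha\circ\phi^1_\beta$, and the action of $T^*_bB/_{\Lambda_b}$ on $\pi^{-1}(b)$ is free and transitive. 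This converts the \v{C}ech bookkeeping into an honest geometric gluing.

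For the ``if'' direction I would assume that a global section $\sigma:B\to M$ exists, restrict it to the good cover via $\sigma_i:=\sigma|_{U_i}$, and observe that on each overlap $U_i\cap U_j$ the two local sections coincide. Since $s_{ji}$ is characterized as the unique section of $T^*(U_i\cap U_j)/_\Lambda|_{U_i\cap U_j}$ satisfying $\phi^1_{s_{ji}}(\sigma_j)=\sigma_i$, and the zero section already does the job, the representing cocycle is identically zero, so $\eta=0$ and hence $c_\Lambda=\Phi(\eta)=0$.

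For the converse, assume $c_\Lambda=0$. Since $\Phi$ is an isomorphism, $\eta=0$, so after refining the good cover $\mathcal U$ if necessary there exist sections $t_i:U_i\to T^*U_i/_\Lambda|_{U_i}$ of $\widetilde p$ with $s_{ji}=t_i-t_j$ on $U_i\cap U_j$. I would then define modified local sections $\widetilde\sigma_i:=\phi^1_{-t_i}(\sigma_i)$; this is well defined because the fiberwise action factors through $T^*B/_\Lambda$ by definition of the period net. The homomorphism property of the action, together with the defining identity $\phi^1_{s_{ji}}(\sigma_j)=\sigma_i$ and the cocycle splitting, gives $\phi^1_{-t_i}(\sigma_i)=\phi^1_{-t_j}(\sigma_j)$ on $U_i\cap U_j$, so the $\widetilde\sigma_i$'s glue into a globally defined section $\sigma:B\to M$.

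The main subtlety I anticipate is verifying that $\phi^1_{-t_i}$ is an honest self-diffeomorphism of $\pi^{-1}(U_i)$ even though $t_i$ only takes values in the quotient $T^*U_i/_\Lambda|_{U_i}$; this is precisely guaranteed by the characterization of $\Lambda_b$ as the isotropy of the fiberwise action, so any two lifts of $t_i$ induce the same diffeomorphism on each fiber. Once this point is carefully recorded, the rest is a formal manipulation of the cocycle relation and the group law for the flows, in close parallel with the classical compact-fiber argument of Duistermaat.
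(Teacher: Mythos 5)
The paper never proves this statement --- it is imported verbatim from Duistermaat and Dazord--Delzant, so there is no in-text argument to compare yours against. Your reconstruction is the standard obstruction-theoretic proof and it is correct: since $\Phi$ is an isomorphism, $c_\Lambda=0$ is equivalent to the \v{C}ech cocycle $\{s_{ji}\}$ being a coboundary; restricting a global section makes the cocycle vanish identically in one direction, and in the other direction twisting the local sections to $\widetilde\sigma_i=\phi^1_{-t_i}(\sigma_i)$ makes them agree on overlaps via the group law $\phi^1_{\alpha+\beta}=\phi^1_\alpha\circ\phi^1_\beta$ and the splitting $s_{ji}=t_i-t_j$. Two small points are worth recording explicitly if you write this up. First, in the ``if'' direction you compute the cocycle only for the particular choice $\sigma_i=\sigma|_{U_i}$, so you are implicitly using that the class $\eta$ is independent of the choice of local sections; this holds because replacing $\sigma_i$ by $\phi^1_{u_i}(\sigma_i)$ changes $s_{ji}$ by the coboundary $u_i-u_j$, which one checks using exactly the free transitivity of the induced $T^*_bB/_{\Lambda_b}$-action that you invoke. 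Second, to split the cocycle over the given good cover rather than some refinement one should note that the cover is Leray for the sheaf $C^\infty(T^*B/_\Lambda)$ (its higher cohomology vanishes on contractible sets by the same long exact sequence used to define $\Phi$); your hedge ``after refining if necessary'' also suffices. Otherwise the argument, including the verification that $\phi^1_{-t_i}$ descends to the quotient because $\Lambda_b$ is precisely the isotropy of the fiberwise action, is sound and is the intended proof behind the citation.
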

\begin{remark}The topological (indeed smooth) structure of a complete Lagrangian fibration $\pi:(M,\omega)\to B$ is completely determined by its \emph{period net} $\Lambda$ and its Chern class $c_\Lambda\in H^2(B,\mathcal F_\Lambda)$. More precisely, two complete Lagrangian fibrations are fiber-wise diffeomorphic if and only if they have diffeomorphic period nets and equal (up to diffeomorphism relating the period nets) Chern classes.\end{remark} In light of the results of the previous section it makes sense to ask for a symplectic classification of complete Lagrangian fibrations. In particular, one might be interested in understanding when the diffeomorphism $\widetilde\psi_\sigma$ of Corollary \ref{cor:tildepsi} can be chosen to be a symplectomorphism between $\big(T^*U/_\Lambda|_U,\widetilde\omega\big)$ and $\big(\pi^{-1}(U),\omega\big)$. The first step in this direction is the existence of local Lagrangian sections. \begin{theorem}[\cite{fiorani2002liouville}]\label{teo:locallagrangiansection}
Let $\pi:(M,\omega)\to B$ be a complete Lagrangian fibration. Then, for each $b\in B$ there exists a neighborhood $U\subset B$ of $b$ and a local Lagrangian section $\sigma:U\to\pi^{-1}(U)$.
\end{theorem}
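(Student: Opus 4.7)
The plan is to start from any smooth section and correct it to a Lagrangian one by flowing along a carefully chosen vertical vector field. Since $\pi$ is a surjective submersion, there is an open neighborhood $U_0 \subset B$ of $b$ and a smooth section $\sigma_0 \colon U_0 \to \pi^{-1}(U_0)$. The pullback $\beta := \sigma_0^*\omega$ is a closed $2$-form on $U_0$ since $d\omega = 0$. After shrinking $U_0$ to a contractible open ball $U$ around $b$, the Poincar\'e lemma produces a $1$-form $\alpha \in \Omega^1(U)$ with $d\alpha = \beta|_U$; this $\alpha$ is the quantity measuring the failure of $\sigma_0$ to be Lagrangian.

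To exploit Definition \ref{def:completefibration} one must truncate: shrink further to precompact open sets $V \subset\subset V' \subset\subset U$ around $b$ and pick a bump function $\chi \colon B \to [0,1]$ supported in $V'$ with $\chi \equiv 1$ on $V$. Extending by zero, $\tilde\alpha := \chi\alpha$ becomes a compactly supported $1$-form on $B$ that agrees with $\alpha$ on $V$. By completeness, the flow $\phi^1_{-\tilde\alpha}$ of $X_{\pi^*(-\tilde\alpha)}$ is a globally defined diffeomorphism of $M$, and by property (iii) of Proposition \ref{prop:vecfieldvertical} it preserves the fibers of $\pi$. Define
\[
\sigma \colon V \to M, \qquad \sigma(b') := \phi^1_{-\tilde\alpha}\bigl(\sigma_0(b')\bigr).
\]
Verticality of the flow gives $\pi \circ \sigma = \pi \circ \sigma_0 = \Id_V$, so $\sigma$ is indeed a section over $V$.

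The Lagrangian condition is then a direct application of Lemma \ref{lem:pullback}:
\[
\sigma^*\omega \;=\; \sigma_0^*\bigl(\phi^1_{-\tilde\alpha}\bigr)^*\omega \;=\; \sigma_0^*\bigl(\omega - \pi^*d\tilde\alpha\bigr) \;=\; \beta|_V - (\pi\circ\sigma_0)^*d\tilde\alpha \;=\; \beta|_V - d\alpha|_V \;=\; 0,
\]
where $d\tilde\alpha|_V = d\alpha|_V = \beta|_V$ because $\chi \equiv 1$ on $V$, and $(\pi\circ\sigma_0)^* = \Id^*$. Hence $\sigma$ is a Lagrangian section on the neighborhood $V$ of $b$.

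The only genuinely delicate point is the truncation step: a priori one has no control over the time for which the flow of $X_{\pi^*(-\alpha)}$ is defined, since completeness in Definition \ref{def:completefibration} is only asserted for \emph{compactly supported} $1$-forms. The bump function is precisely what turns the local primitive $\alpha$ into something that can be flowed for time $1$ on all of $M$, at the cost of restricting the resulting section to $V \subset U$. Everything else reduces to the Poincar\'e lemma together with the single computation in Lemma \ref{lem:pullback}.
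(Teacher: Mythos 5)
The paper does not prove this statement at all --- it is quoted from \cite{fiorani2002liouville} with no argument given --- so there is nothing internal to compare against. Your proof is correct and is the standard one: take an arbitrary smooth section $\sigma_0$, use the Poincar\'e lemma to find a primitive $\alpha$ of $\sigma_0^*\omega$, and push $\sigma_0$ along the fibrewise flow of $X_{\pi^*(-\alpha)}$ so that Lemma \ref{lem:pullback} cancels the defect; the cut-off by a bump function is exactly the right device to reconcile the local primitive with the completeness hypothesis of Definition \ref{def:completefibration}, which is only stated for compactly supported forms, and the sign bookkeeping ($d\tilde\alpha|_V=d\alpha|_V=\beta|_V$ because $\chi\equiv 1$ on the open set $V$) is handled correctly. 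The argument also fits cleanly with the toolkit the paper sets up (Proposition \ref{prop:vecfieldvertical}(iii) for verticality of the flow, Lemma \ref{lem:pullback} for the pullback identity), so it could serve as a self-contained proof in place of the citation.
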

\begin{corollary}\label{cor:psisymplectomorphism}
The diffeomorphism $\widetilde\psi_\sigma$ is a symplectomorphism from $\big(T^*U/_\Lambda|_U,\widetilde\omega\big)$ to $\big(\pi^{-1}(U),\omega\big)$ if and only if the local section $\sigma$ is Lagrangian.
\end{corollary}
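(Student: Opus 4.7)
My plan is to reduce the statement to an identity for $\psi_\sigma$ before quotienting. Since the covering map $q\colon T^*U\to T^*U/_\Lambda|_U$ is a local diffeomorphism with $q^*\widetilde\omega=\Omega_{\text{can}}$, the diffeomorphism $\widetilde\psi_\sigma$ is a symplectomorphism if and only if $\psi_\sigma^*\omega=\Omega_{\text{can}}$. The ``only if'' direction is then immediate: $\phi^1_0=\Id$ gives $\psi_\sigma\circ 0=\sigma$ for the zero section $0\colon U\to T^*U$, and pulling $\psi_\sigma^*\omega=\Omega_{\text{can}}$ further back along $0$ yields $\sigma^*\omega=0^*\Omega_{\text{can}}=0$, so $\sigma$ is Lagrangian.

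For the converse, the core step is the identity
\begin{equation*}
\psi_\sigma^*\omega=\Omega_{\text{can}}+p^*\sigma^*\omega,
\end{equation*}
which immediately gives $\psi_\sigma^*\omega=\Omega_{\text{can}}$ when $\sigma$ is Lagrangian. I would prove it pointwise by fixing $\alpha_0\in T^*U$, choosing Darboux coordinates $(q^1,\dots,q^n,p_1,\dots,p_n)$ around $\alpha_0$, and computing $d\psi_\sigma$ on the basis $\{\partial_{q^i},\partial_{p_j}\}$. For a vertical direction $\partial_{p_j}$, commutativity of flows (Proposition \ref{prop:vecfieldvertical}\,(iv)) yields
\begin{equation*}
d\psi_\sigma(\partial_{p_j})=d\phi^1_{\alpha_0}\bigl(X_{\pi^*dq^j}|_{\sigma(p(\alpha_0))}\bigr).
\end{equation*}
For a horizontal direction $\partial_{q^i}$, I extend $\alpha_0$ to the \emph{constant} 1-form $\widetilde\alpha_0$ determined by the Darboux chart; since $d\widetilde\alpha_0=0$, Lemma \ref{lem:pullback} gives $(\phi^1_{\widetilde\alpha_0})^*\omega=\omega$, and the chain rule yields
\begin{equation*}
d\psi_\sigma(\partial_{q^i})=d\phi^1_{\widetilde\alpha_0}\bigl(d\sigma(\partial_{q^i})\bigr).
\end{equation*}

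Substituting these expressions into $\omega\bigl(d\psi_\sigma(\cdot),d\psi_\sigma(\cdot)\bigr)$ and splitting into the three types of pairings completes the identity. The vertical--vertical terms vanish because the fibres of $\pi$ are Lagrangian. The mixed terms, via the defining relation $\omega(X_{\pi^*dq^j},\cdot)=\pi^*dq^j$ and $\pi\circ\sigma=\Id_U$, reduce to the canonical pairing and match $\Omega_{\text{can}}$ exactly. The horizontal--horizontal terms, using $(\phi^1_{\widetilde\alpha_0})^*\omega=\omega$, collapse to $(\sigma^*\omega)(\partial_{q^i},\partial_{q^j})$, i.e.\ the $p^*\sigma^*\omega$ contribution.

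The main obstacle is organising these three pairings so that the sign conventions of the paper, namely $\omega(X_{\pi^*\alpha},\cdot)=\pi^*\alpha$ and $q^*\widetilde\omega=\Omega_{\text{can}}$ (whose sign is implicitly fixed by the Remark stating that the fibrewise action of $T^*B$ on itself is the sum of covectors), align so that the mixed pairings of $\psi_\sigma^*\omega$ coincide with those of $\Omega_{\text{can}}$ on the nose rather than up to a global sign. Once this bookkeeping is settled the identity drops out and the corollary follows.
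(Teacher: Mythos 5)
Your proof is correct, but it reaches the key identity by a genuinely different route from the paper. The paper's proof also boils down to the relation $(\widetilde\psi_\sigma)^*\omega-\widetilde\omega=\widetilde p^*\sigma^*\omega$, but it derives it abstractly: applying Lemma \ref{lem:pullback} to an \emph{arbitrary} local $1$-form $\alpha$ and pulling back by $\sigma$ yields $(q\circ\alpha)^*\big((\widetilde\psi_\sigma)^*\omega+\widetilde p^*\sigma^*\omega-\widetilde\omega\big)=0$ for every $\alpha$, and the argument is then closed by invoking an external structural fact, cited to \cite[Proposition 2.3]{gross1998special}, that the difference $(\widetilde\psi_\sigma)^*\omega-\widetilde\omega$ is basic, i.e.\ of the form $\widetilde p^*\beta$. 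You instead work upstairs with $\psi_\sigma$ and establish $\psi_\sigma^*\omega=\Omega_{\text{can}}+p^*\sigma^*\omega$ by computing $d\psi_\sigma$ on a canonical coordinate frame; your vertical--vertical, mixed, and horizontal--horizontal case analysis in effect \emph{proves} the basic-ness claim rather than citing it, with Lemma \ref{lem:pullback} entering only through the special case $d\widetilde\alpha_0=0$ (flow-invariance of $\omega$ under closed forms), and with the homomorphism property of the action (\ref{actionfibres}) doing the work for the vertical directions. What your approach buys is self-containedness and an elementary verification; what it costs is the sign and coordinate bookkeeping you flag at the end, plus one small technicality you should mention: the constant extension $\widetilde\alpha_0$ is not compactly supported, so it must be cut off by a bump function equal to $1$ near $b_0$, which is harmless since only the value of $d\widetilde\alpha_0$ over $b_0$ enters. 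Your separate, direct treatment of the ``only if'' direction via $\psi_\sigma\circ 0=\sigma$ is also cleaner than extracting it from the identity as the paper implicitly does.
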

\begin{proof}
Let $\alpha\in\Gamma(T^*U)$ be a locally defined $1$-form on $B$ and let $q:T^*U\to T^*U/_\Lambda|_U$ be the restricted quotient map. Then, $q\circ\alpha:U\to T^*U/_\Lambda|_U$ is a local section of the symplectic reference Lagrangian fibration associated with $\pi:(M,\omega)\to B$ (see Definition \ref{def:referencesymplectic}). Applying Lemma \ref{lem:pullback} we get \begin{align*}
    (\phi_\alpha^1)^*\omega&=\omega+\pi^*\mathrm d\alpha \\ &= \omega+\pi^*\alpha^*\Omega_{\text{can}}\tag{$\mathrm d\alpha=\alpha^*\Omega_{\text{can}}$} \\&=\omega+\pi^*(q\circ\alpha)^*\widetilde\omega \ . \tag{$q^*\widetilde\omega=\Omega_{\text{can}}$}
\end{align*}
Applying $\sigma^*$ to both sides of the equation and using $\pi\circ\sigma=\text{Id}_U$ we have $$(\phi_\alpha^1\circ\sigma)^*\omega=\sigma^*\omega+(q\circ\alpha)^*\widetilde\omega \ .$$ Moreover, since by definition $\phi_\alpha^1\circ\sigma=\widetilde\psi_\sigma\circ q\circ\alpha$ and $\widetilde p\circ q\circ\alpha=\text{Id}_U$ (see (\ref{psialpha}) and Definition \ref{def:referencesymplectic}), the last equality can be written as $$(q\circ\alpha)^*\big((\widetilde\psi_\sigma)^*\omega+\widetilde p^*\sigma^*\omega-\widetilde\omega\big)=0 \ .$$
\underline{\emph{Claim}}: There exists a locally defined $2$-form $\beta$ on $U\subset B$ such that $(\widetilde\psi_\sigma)^*\omega-\widetilde\omega=\widetilde p^*\beta$.\newline Assuming the claim we can conclude the proof of the theorem. In fact, if such $\beta$ exists we get $$(q\circ\alpha)^*\big(\widetilde p^*\beta+\widetilde p^*\sigma^*\omega\big)=0 \ .$$ Using again that $\widetilde p\circ q\circ\alpha=\text{Id}_U$ we obtain $\beta=\sigma^*\omega$, hence $$(\widetilde\psi_\sigma)^*\omega-\widetilde\omega=\widetilde p^*\sigma^*\omega \ .$$At this point it is clear that $\sigma$ is Lagrangian (i.e. $\sigma^*\omega=0$) if and only if $\widetilde\psi_\sigma$ is a symplectomorphism. Finally, the proof of the claim above can be found in \cite[Proposition 2.3]{gross1998special}.
\end{proof}
Let $\pi:(M,\omega)\to B$ be a complete Lagrangian fibration and choose a good open cover $\mathcal U=\{U_i\}_{i\in I}$ of $B$ such that there exists a local Lagrangian section $\sigma_i:U_i\to\pi^{-1}(U)$ for each $i\in I$ (Theorem \ref{teo:locallagrangiansection}). Using Corollary \ref{cor:psisymplectomorphism} we can apply verbatim the construction made at the beginning of the section replacing "diffeomorphism" with "symplectomorphism". In particular, we get the existence of local Lagrangian sections $s_{ji}$ for $\widetilde p:\big(T^*(U_i\cap U_j)/_\Lambda|_{U_i\cap U_j},\widetilde\omega\big)\to U_i\cap U_j$. Let us denote this sheaf of Lagrangian sections by $\mathcal Z^1(T^*B/\Lambda)$. As before, the family $\{s_{ji}\}_{i,j\in I}$ defines a \v{C}ech cohomology class $\xi\in H^1(B,\mathcal Z^1(T^*B/_\Lambda))$, called the \emph{Lagrangian Chern class} associated with the complete Lagrangian fibration $\pi:(M,\omega)\to B$.
\begin{proposition}\label{prop:coveringspace}
The map $p|_\Lambda:\Lambda\to B$ is a covering space.
\end{proposition}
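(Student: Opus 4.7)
The strategy is to verify that $p|_\Lambda$ is a surjective local diffeomorphism and that each point of $B$ admits an evenly covered neighbourhood. Theorem \ref{teo:smoothmanifold} already shows that, near any point of $\Lambda$, the submanifold $\Lambda$ coincides with the graph of a closed $1$-form $\alpha_\sigma : U \to T^*U$, and the restriction of $p$ to such a graph is a diffeomorphism onto $U$. Combined with the fact that the zero section of $T^*B$ lies in $\Lambda$, this shows that $p|_\Lambda$ is a surjective local diffeomorphism.

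I would fix $b_0 \in B$ and construct an evenly covered neighbourhood using the fibre-wise abelian group structure. Choose a $\mathbb{Z}$-basis $\lambda_1, \dots, \lambda_k$ of the discrete lattice $\Lambda_{b_0} \cong \mathbb{Z}^k$. By the local structure above, applied to each of the \emph{finitely many} generators $\lambda_i$, there is a connected open neighbourhood $V \ni b_0$ contained in the common domain of smooth local sections $\widetilde\lambda_1,\dots,\widetilde\lambda_k : V \to \Lambda$ with $\widetilde\lambda_i(b_0)=\lambda_i$. Since each fibre $\Lambda_b$ is a subgroup of $T^*_b B$, every integer combination $\widetilde\mu:=\sum_i n_i\widetilde\lambda_i$ is again a smooth section of $\Lambda$ over $V$ extending $\mu=\sum_i n_i \lambda_i \in \Lambda_{b_0}$. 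These sections assemble into a map
$$
\Phi \colon \Lambda_{b_0} \times V \longrightarrow p|_\Lambda^{-1}(V), \qquad (\mu,b)\longmapsto \widetilde\mu(b),
$$
where $\Lambda_{b_0}$ carries the discrete topology, and the proposition reduces to showing that $\Phi$ is a homeomorphism over $V$.

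Continuity and openness of $\Phi$ are immediate: each $\widetilde\mu(V)$ is open in $\Lambda$, being the image of a section of an \'etale map. Injectivity follows from the étale property of $p|_\Lambda$: the subset of $V$ on which two sections $\widetilde\mu_1,\widetilde\mu_2$ agree is both open (local uniqueness of sections of a local diffeomorphism) and closed, hence either empty or all of $V$; evaluation at $b_0$ then forces $\mu_1=\mu_2$. The main obstacle is surjectivity of $\Phi$, namely ruling out ``extra'' periods $\nu\in\Lambda_b$ not arising from $\Lambda_{b_0}$. To this end, I would further shrink $V$ so that the sections $\widetilde\lambda_i(b)$ stay linearly independent and close to $\lambda_i$, and show that the image of $\Phi$ is both open and closed in $p|_\Lambda^{-1}(V)$. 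Openness is already in hand, while closedness follows from the discreteness of $\Lambda_{b_0}$: a convergent sequence $\widetilde\mu_n(b_n)\to \nu$ forces the coefficients $\mu_n\in\Lambda_{b_0}$ to remain bounded, hence eventually constant. Finally, combining this with the closedness of $\Lambda$ in $T^*B$, which prevents a locally defined section of $\Lambda$ from escaping to infinity while being continued back to $b_0$, identifies the image of $\Phi$ with the full preimage $p|_\Lambda^{-1}(V)$, yielding the desired local trivialization and completing the proof that $p|_\Lambda$ is a covering.
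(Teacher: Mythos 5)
Your route is genuinely different from --- and substantially more careful than --- the paper's own proof of Proposition \ref{prop:coveringspace}. The paper merely observes that each fibre $\Lambda_b\cong\mathbb Z^k$ is discrete and that $p:T^*B\to B$ is locally trivial, and then writes $p^{-1}(U_b)\cap\Lambda\cong U_b\times(T^*_bB\cap\Lambda)$; but that identification is exactly the assertion that no periods appear or disappear as $b$ varies, which is the entire content of the proposition (a closed submanifold with discrete fibres of a trivial bundle need not project as a covering: the number of sheets can jump). You correctly isolate this as the surjectivity of your map $\Phi$, and your construction of $\Phi$ from a $\mathbb Z$-basis of $\Lambda_{b_0}$ extended to local sections of $\Lambda$, together with the openness and injectivity arguments (agreement locus of two sections of an \'etale map is open and closed in connected $V$), is sound; it is essentially the argument of Duistermaat that the paper cites elsewhere.

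The surjectivity step as written, however, does not close. Proving that the image of $\Phi$ is open and closed in $p|_\Lambda^{-1}(V)$ yields nothing, because $p|_\Lambda^{-1}(V)$ is precisely the disconnected space you are trying to describe; and closedness of $\Lambda$ in $T^*B$ does not by itself ``prevent escape to infinity'' --- it only guarantees that limits which exist lie in $\Lambda$. Two further inputs are needed to make your compactness argument bite on an arbitrary period $\nu\in\Lambda_b$, $b$ near $b_0$. First, \emph{uniform discreteness}: applying Theorem \ref{teo:smoothmanifold} at the zero covector $0_{b_0}\in\Lambda$ shows that a neighbourhood of $0_{b_0}$ in $T^*B$ meets $\Lambda$ only in the zero section; since each $\Lambda_b$ is a subgroup, distinct periods over $b$ near $b_0$ are therefore separated by a uniform $\varepsilon>0$. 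Second, \emph{boundedness}: one must show that every $\nu\in\Lambda_b$ can be translated by an element of $\bigoplus_i\mathbb Z\,\widetilde\lambda_i(b)$ into a fixed compact set. This follows because the rank $k$ of $\Lambda_b$ is locally constant (by Corollary \ref{cor:tildepsi} and Remark \ref{rem:lagrangianfibration} the fibres $\pi^{-1}(b)\cong T^*_bB/_{\Lambda_b}$ are mutually diffeomorphic over connected $B$), so $\Lambda_b$ cannot contain $k+1$ linearly independent elements and hence lies in the real span of the $\widetilde\lambda_i(b)$, where it reduces into a fundamental parallelepiped. With these two facts in hand, a sequence of putative extra periods at $b_n\to b_0$ subconverges to a point of $\Lambda_{b_0}$, hence eventually coincides with one of your sections $\widetilde\mu$ by the $\varepsilon$-separation, a contradiction; this completes your proof.
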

\begin{proof}
Notice that the smooth submanifold $\iota:\Lambda\hookrightarrow T^*B$ intersects $T^*_bB$, for each $b\in B$, at the period lattice $\Lambda_b\cong\mathbb Z^k, 1\le k\le n$ (see Definiton \ref{def:periodnet}). Hence, the fibre $(p|_\Lambda)^{-1}(b)\cong \Lambda\cap T^*_bB\cong\Lambda_b\cong\mathbb Z^k$ is discrete. Since $p:T^*B\to B$ is a vector bundle, for each $b\in B$ there exists an open neighborhood $U_b$ such that $p^{-1}(U_b)\cong U_b\times T^*_bB$. In particular, \begin{align*}
    (p|_\Lambda)^{-1}(U_b)&=\iota^{-1}\big(p^{-1}(U_b)\big) \\ &\cong p^{-1}(U_b)\cap\Lambda \\ &\cong U_b\times(T^*_bB\cap\Lambda) \\ &\cong U_b\times\mathbb Z^k \ .
    \end{align*}
\end{proof}
\begin{theorem}[\cite{duistermaat1980global},\cite{dazord1987probleme}]\label{teo:lagrangiansection}
Let $\pi:(M,\omega)\to B$ be a complete Lagrangian fibration with period net $\Lambda$ and vanishing Chern class $c_\Lambda=0$. Then, it admits a global Lagrangian section if and only if $\xi=0$.
\end{theorem}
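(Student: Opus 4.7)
The plan is to mimic the classical argument for the smooth Chern class (Theorem \ref{teo:globalsection}) in the Lagrangian category: local Lagrangian sections exist by Theorem \ref{teo:locallagrangiansection} and they induce local \emph{symplectic} trivializations by Corollary \ref{cor:psisymplectomorphism}, so the class $\xi$ is precisely the obstruction to patching them into a global Lagrangian section. For the $(\Rightarrow)$ direction, given a global Lagrangian section $\sigma : B \to M$ I would take $\sigma_i := \sigma|_{U_i}$ for every element $U_i$ of a good cover; the transition $s_{ji}$ characterised by $\phi^1_{s_{ji}}(\sigma_j) = \sigma_i$ then satisfies $\phi^1_{s_{ji}}(\sigma) = \sigma$ on $U_i \cap U_j$, forcing $s_{ji}$ to take values in $\Lambda$, i.e.\ $s_{ji} = 0$ in $T^*(U_i \cap U_j)/_\Lambda$, and hence $\xi = 0$.

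For the $(\Leftarrow)$ direction, assuming $\xi = 0$ I would write the cocycle as a \v{C}ech coboundary $s_{ji} = t_i - t_j$ for local Lagrangian sections $t_i : U_i \to T^*U_i/_\Lambda|_{U_i}$ of the reference fibration, and consider the modified local sections $\sigma'_i := \phi^1_{-t_i}(\sigma_i)$. Two properties then need to be verified: (a) each $\sigma'_i$ is still Lagrangian; (b) the $\sigma'_i$ agree on overlaps. For (a), one lifts $-t_i$ locally to a closed $1$-form $\alpha$, and Lemma \ref{lem:pullback} gives $(\phi^1_\alpha)^*\omega = \omega + \pi^*\mathrm d\alpha = \omega$, so $\phi^1_\alpha$ is a symplectomorphism and sends the Lagrangian $\sigma_i(U_i)$ to a Lagrangian. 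For (b), using the additivity and commutativity of $\alpha \mapsto X_{\pi^*\alpha}$ from Proposition \ref{prop:vecfieldvertical}, one computes on $U_i \cap U_j$
\begin{equation*}
\sigma'_i = \phi^1_{-t_i} \circ \phi^1_{s_{ji}}(\sigma_j) = \phi^1_{s_{ji} - t_i}(\sigma_j) = \phi^1_{-t_j}(\sigma_j) = \sigma'_j \ ,
\end{equation*}
so the $\sigma'_i$ glue to the desired global Lagrangian section $\sigma : B \to M$.

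The main subtlety I expect to address is that a section $t_i$ of $T^*B/_\Lambda$ is not literally a $1$-form on $B$, so the flow $\phi^1_{t_i}$ has to be defined by lifting $t_i$ locally to a closed $1$-form; any two such lifts differ by a section of $\Lambda$, which acts trivially on the fibres of $\pi$ by the very definition of the period net, so the modification in (b) is independent of the choice of lift and honestly defined on $M$. The hypothesis $c_\Lambda = 0$ does not enter the construction explicitly, but is consistent with it: the natural morphism $\mathcal Z^1(T^*B/_\Lambda) \to C^\infty(T^*B/_\Lambda)$ induces a map on $H^1(B,\cdot)$ sending $\xi$ to the class whose image under the connecting homomorphism $\Phi$ of Theorem \ref{teo:globalsection} is $c_\Lambda$, so $\xi = 0$ forces $c_\Lambda = 0$, while in the forward direction a global Lagrangian section is a fortiori a global smooth section and likewise yields $c_\Lambda = 0$.
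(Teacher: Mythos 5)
The paper does not prove this theorem in the text --- it is quoted from \cite{duistermaat1980global} and \cite{dazord1987probleme} --- so there is no in-paper argument to compare against; your proof is the standard one from those references and is correct, built exactly on the tools the paper has already assembled (Lemma \ref{lem:pullback}, Proposition \ref{prop:vecfieldvertical}, Theorem \ref{teo:locallagrangiansection} and Corollary \ref{cor:psisymplectomorphism}). Both directions are sound: restricting a global Lagrangian section forces the cocycle $s_{ji}$ to take values in $\Lambda$ and hence vanish in $T^*B/_\Lambda$, while in the converse direction the correction $\sigma_i' = \phi^1_{-t_i}(\sigma_i)$ is legitimately Lagrangian because a Lagrangian section of the reference fibration lifts locally to a \emph{closed} $1$-form (so $(\phi^1_\alpha)^*\omega = \omega$), the gluing computation $\phi^1_{-t_i+s_{ji}} = \phi^1_{-t_j}$ is justified by the additivity and commutativity in Proposition \ref{prop:vecfieldvertical}, and your closing remark that $c_\Lambda = 0$ is in fact implied by either side of the equivalence (via the sheaf map $\mathcal Z^1(T^*B/_\Lambda) \to C^\infty(T^*B/_\Lambda)$) is also accurate.
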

\begin{corollary}\label{cor:lagrangianglobalsection}
Let $\pi:(M,\omega)\to B$ be a complete Lagrangian fibration over a contractible open connected subset $B$ in $\R^n$. Then, it admits a global Lagrangian section $\sigma:B\to (M,\omega)$.
\end{corollary}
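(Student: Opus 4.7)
The plan is to invoke Theorem \ref{teo:lagrangiansection}, for which I must establish the vanishing of two cohomological obstructions: the ordinary Chern class $c_\Lambda \in H^2(B,\mathcal F_\Lambda)$ and the Lagrangian Chern class $\xi \in H^1(B,\mathcal Z^1(T^*B/_{\Lambda}))$. Contractibility of $B$ will force both to vanish, although for slightly different sheaf-theoretic reasons.

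First I would handle $c_\Lambda$. By Proposition \ref{prop:coveringspace}, the period net is a covering space $p|_\Lambda\colon \Lambda\to B$ with discrete fibre $\Z^k$. Since $B$ is contractible (in particular simply connected), this covering is trivial, so $\Lambda\cong B\times \Z^k$ over $B$, and the sheaf $\mathcal F_\Lambda$ of its smooth sections is the constant sheaf $\underline{\Z}^k$. Because $B\subset\R^n$ is open, it is paracompact Hausdorff and has the homotopy type of a CW complex, so sheaf cohomology with constant coefficients agrees with singular cohomology; contractibility then yields $H^p(B,\mathcal F_\Lambda)\cong H^p(B,\Z^k)=0$ for all $p\ge 1$. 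In particular $c_\Lambda=0$, and Theorem \ref{teo:globalsection} already supplies a global smooth section.

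Next I would show $\xi=0$. A local Lagrangian section of $\widetilde p\colon T^*B/_{\Lambda}\to B$ lifts, after passing to a small enough neighbourhood, to a local Lagrangian section of $p\colon T^*B\to B$, i.e.\ to a local closed $1$-form on $B$; any two such lifts differ by a section of $p|_\Lambda$, which is itself closed by the computation at the end of the proof of claim (1) of Theorem \ref{teo:smoothmanifold}. This produces a short exact sequence of sheaves
\[
0\to \mathcal F_\Lambda\to \mathcal Z^1(B)\to \mathcal Z^1(T^*B/_{\Lambda})\to 0,
\]
where $\mathcal Z^1(B)$ is the sheaf of closed smooth $1$-forms on $B$. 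The associated long exact sequence contains
\[
H^1(B,\mathcal Z^1(B))\to H^1\bigl(B,\mathcal Z^1(T^*B/_{\Lambda})\bigr)\to H^2(B,\mathcal F_\Lambda),
\]
whose right-hand term vanishes by the previous step. To annihilate the left-hand term I would invoke the Poincar\'e-lemma resolution $0\to\underline{\R}\to C^\infty(B)\xrightarrow{\mathrm{d}}\mathcal Z^1(B)\to 0$: since $C^\infty(B)$ is fine (hence acyclic) and $B$ is contractible, the long exact sequence yields $H^1(B,\mathcal Z^1(B))\cong H^2(B,\underline{\R})\cong H^2_{dR}(B)=0$. Therefore $\xi=0$, and Theorem \ref{teo:lagrangiansection} produces the desired global Lagrangian section.

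The main subtlety is correctly identifying the sheaf $\mathcal Z^1(T^*B/_{\Lambda})$ and fitting it into the short exact sequence with $\mathcal F_\Lambda$; once this is in place, both obstructions reduce to the Poincar\'e lemma and the vanishing of constant-coefficient cohomology on a contractible base.
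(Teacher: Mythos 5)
Your proposal is correct and follows essentially the same route as the paper: both arguments rest on the short exact sequence $0\to\mathcal F_\Lambda\to\mathcal Z^1(T^*B)\to\mathcal Z^1(T^*B/_{\Lambda})\to 0$, the identification of $\mathcal F_\Lambda$ with the constant sheaf $\underline{\mathbb Z}^k$ over the contractible base, the Poincar\'e-lemma identification $H^k(B,\mathcal Z^1(T^*B))\cong H^{k+1}_{\mathrm{dR}}(B)$, and the conclusion via Theorems \ref{teo:globalsection} and \ref{teo:lagrangiansection}. The only difference is cosmetic ordering: you kill $c_\Lambda$ first and then $\xi$, whereas the paper derives the single isomorphism $H^1(B,\mathcal Z^1(T^*B/_{\Lambda}))\cong H^2(B,\mathcal F_\Lambda)$ and shows the latter vanishes, which disposes of both obstructions at once.
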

\begin{proof}
Let $\Lambda$ be the period net of the complete Lagrangian fibration. There exists a short exact sequence of sheaves $$0\to\mathcal F_\Lambda\to\mathcal Z^1(T^*B)\to\mathcal Z^1(T^*B/_\Lambda)\to 0,$$where $\mathcal Z^1(T^*B)$ denotes the sheaf of closed $1$-forms on $B$ and $\mathcal F_\Lambda$ is the sheaf of sections of the covering $p|_\Lambda:\Lambda\to B$. The sheaf $\mathcal Z^1(T^*B)$ can be equivalently described as the sheaf of Lagrangian sections of $p:(T^*B,\Omega_{\text{can}})\to B$. The long exact sequence induced in cohomology gives \begin{align*}
    \dots&\to H^1(B,\mathcal Z^1(T^*B))\to H^1(B,\mathcal Z^1(T^*B/_\Lambda))\overset{\delta}{\to}H^2(B,\mathcal F_\Lambda)\to \\ &\to H^2(B,\mathcal Z^1(T^*B))\to H^2(B,\mathcal Z^1(T^*B/_\Lambda))\to\dots
\end{align*}
Using the following isomorphism \begin{equation*}
    H^k(B,\mathcal Z^1(T^*B))\cong H^{k+1}_{\text{dR}}(B,\R),\quad \text{if} \ k\ge 1 \tag{see \cite{gunning2015lectures} and \cite{bott1982differential}}
\end{equation*}and the hypothesis that $B$ is contractible we get \begin{equation}
    H^1(B,\mathcal Z^1(T^*B/_\Lambda))\cong H^2(B,\mathcal F_\Lambda) \ .
\end{equation}
On the other hand, the sheaf $\mathcal F_\Lambda$ is the sheaf of sections of a covering space over $B$ (Proposition \ref{prop:coveringspace}), hence it is locally constant on $B$\footnote{A sheaf of abelian groups $\mathcal F$ on $X$ is locally constant if for each $x\in X$ there exists a neighborhood $U$ containing $x$ such that $\mathcal F|_U$ is a constant sheaf on $U$.}. It is a standard result that over a smooth manifold $X$, locally constant sheaves of abelian groups $\mathcal F_G$ (also known as \emph{local systems}) correspond to representations $\rho:\pi(X,x_0)\to \Aut(G)$ (see \cite{dimca2004sheaves} Proposition 2.5.1). In our case $X=B$ is contractible, thus any representation as above is trivial and the local system $\mathcal F_\Lambda$ is actually isomorphic to the constant sheaf $\underline{\mathbb Z}^k$, for some $1\le k\le n$ (see Definition \ref{def:periodnet}). In the end, \begin{align*}H^1(B,\mathcal Z^1(T^*B/_\Lambda))&\cong H^2(B,\mathcal F_\Lambda) \\ &\cong H^2(B,\underline{\mathbb Z}^k) \tag{sheaf cohomology} \\ &\cong H^2(B,\mathbb Z^k) \tag{singular cohomology} \\ &\cong\bigg(H^2(B,\mathbb Z)\bigg)^k \cong\{0\}\end{align*}In particular, both the Chern class and the Lagrangian Chern class of the fibration vanish. As a consequence of Theorem \ref{teo:globalsection} and Theorem \ref{teo:lagrangiansection} we get the existence of a global Lagrangian section $\sigma:B\to (M,\omega)$.
\end{proof}

\section{\texorpdfstring{$\deft$}{deft} as a complete Hamiltonian integrable system}
\subsection{The Hamiltonian actions}
Before analyzing the two actions in detail and prove the first main Theorem, let us recall some basic definitions and a standard result regarding symplectic actions and moment maps.
\begin{definition}\label{def:momentmap}
Let $G$ be a Lie group, with Lie algebra $\mathfrak g$, acting on a symplectic manifold $(M,\omega)$ by symplectomorphisms. We say the action is \emph{Hamiltonian} if there exists a smooth function $\mu:M\to\mathfrak g^*$ satisfying the following properties: \begin{itemize}
    \item[(i)] The function $\mu$ is equivariant with respect to the $G$-action on $M$ and the co-adjoint action on $\mathfrak g^*$, namely \begin{equation}
        \mu_{g\cdot p}=\Ad^*(g)(\mu_p):=\mu_p\circ \Ad(g^{-1})\in\mathfrak g^* \ .
    \end{equation}
    \item[(ii)]Given $\xi\in\mathfrak g$, let $X_\xi$ be the vector field on $M$ generating the action of the $1$-parameter subgroup generated by $\xi$, i.e. $X_\xi=\frac{\mathrm d}{\mathrm dt}\text{exp}(t\xi)\cdot p |_{t=0}$. Then, for every $\xi\in\mathfrak g$ we have\begin{equation}
        \mathrm d\mu^{\xi}=\iota_{X_\xi}\omega=\omega(X_\xi,\cdot)
    \end{equation}where $\mu^\xi:M\to\R$ is the function $\mu^\xi(p):=\mu_p(\xi)$.
\end{itemize}A map $\mu$ satisfying the two properties above is called a \emph{moment map} for the Hamiltonian action.
\end{definition}
\begin{lemma}\label{lem:restrictedhamiltonian}
Let $(M,\omega)$ be a symplectic manifold endowed with a Hamiltonian $G$-action and moment map $\mu_G:M\to\mathfrak g^*$. If $H\le G$ is any closed subgroup, then the restricted $H$-action is Hamiltonian with moment map $\mu_H:M\to\mathfrak h^*$ given by $\mu_H:=|_{\mathfrak h}\circ\mu_G$, where $|_{\mathfrak h}:\mathfrak g^*\to\mathfrak h^*$ is the map which associates to each functional on $\mathfrak g$ its restriction on $\mathfrak h$.
\end{lemma}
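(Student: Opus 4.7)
The plan is to verify directly the two defining axioms of Definition \ref{def:momentmap} for the candidate $\mu_H:=|_{\mathfrak h}\circ\mu_G$, exploiting the fact that both the inclusion $H\hookrightarrow G$ and the dual restriction $|_{\mathfrak h}:\mathfrak g^*\to\mathfrak h^*$ interact well with the algebraic operations involved. Since the $H$-action on $M$ is just the restriction of the $G$-action, which is by symplectomorphisms, the $H$-action is automatically by symplectomorphisms, so only the equivariance and the Hamiltonian vector field identity need to be checked.

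For the equivariance, the first step is to observe that, because $H\le G$ is a Lie subgroup, the adjoint action $\mathrm{Ad}_G(h)$ preserves $\mathfrak h\subset\mathfrak g$ for every $h\in H$, and its restriction to $\mathfrak h$ agrees with $\mathrm{Ad}_H(h)$. Dualizing, the restriction map $|_{\mathfrak h}$ intertwines the coadjoint actions, namely $|_{\mathfrak h}\circ\mathrm{Ad}^*_G(h)=\mathrm{Ad}^*_H(h)\circ|_{\mathfrak h}$. Composing with the equivariance of $\mu_G$ for any $p\in M$ and $h\in H$ then yields
\begin{equation*}
\mu_H(h\cdot p)=|_{\mathfrak h}\circ\mu_G(h\cdot p)=|_{\mathfrak h}\circ\mathrm{Ad}^*_G(h)(\mu_G(p))=\mathrm{Ad}^*_H(h)(\mu_H(p)),
\end{equation*}
which is precisely the equivariance requirement for $\mu_H$.

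For the differential identity, I would first note that for $\xi\in\mathfrak h$ the fundamental vector field $X^H_\xi$ generating the one-parameter subgroup $t\mapsto\exp_H(t\xi)$ on $M$ coincides with the fundamental vector field $X^G_\xi$ generated by the same element viewed inside $\mathfrak g$, because the inclusion $H\hookrightarrow G$ intertwines the exponential maps. On the other hand, by definition $\mu_H^\xi(p)=\mu_H(p)(\xi)=\mu_G(p)(\xi)=\mu_G^\xi(p)$ for every $\xi\in\mathfrak h$ and $p\in M$, so $\mu_H^\xi=\mu_G^\xi$ as smooth functions on $M$. Differentiating and applying the Hamiltonian identity for $\mu_G$ gives $\mathrm d\mu_H^\xi=\mathrm d\mu_G^\xi=\iota_{X^G_\xi}\omega=\iota_{X^H_\xi}\omega$, as required.

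There is no real obstacle here — the entire content of the lemma reduces to the compatibility of the restriction map $|_{\mathfrak h}$ with the coadjoint action and with the evaluation pairing, both of which are immediate consequences of $H$ being a closed subgroup. The only minor point worth being careful about is that the identification $X^H_\xi=X^G_\xi$ relies on $H$ being a (closed) Lie subgroup, so that the exponential map of $H$ is the restriction of the exponential map of $G$ to $\mathfrak h$; once this is observed the computation is purely formal.
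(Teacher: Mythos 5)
Your proof is correct and complete: both axioms of Definition \ref{def:momentmap} are verified directly, and the two points that actually require an argument --- that $|_{\mathfrak h}$ intertwines the coadjoint actions because $\Ad_G(h)$ preserves $\mathfrak h$ for $h\in H$, and that $X^H_\xi=X^G_\xi$ because the exponential maps are compatible --- are both identified and justified. The paper states this lemma without proof, treating it as standard, so there is no argument to compare against; your direct verification is exactly the expected one.
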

From Corollary \ref{cor:cubictorusandproperlyconvex} and Lemma \ref{lem:almostkahler} the deformation space $\deft$ is diffeomorphic to $\Hyp\times\C^*$. Since the circular action defined in (\ref{S1action}) preserves $\Hyp\times\{0\}$, for $\theta\in\R$ let $\Psi_\theta$ denotes the induced $S^1$-action on $\deft$. Then, we have the following \begin{theorem}[\cite{rungi2021pseudo}]\label{teo:rungitamburelli2}
The $S^1$-action on $\deft$ is Hamiltonian with respect to $\ome_f$ and it satisfies $$
     \Psi_\theta^{*}\ome_f=\ome_f, \qquad \Psi_\theta^{*}\g_f=\g_f \ .$$
     Moreover, the Hamiltonian function is given by $H_1(z,w)=\frac{2}{3}f\big(\Imm(z)^3|w|^2\big)$.
\end{theorem}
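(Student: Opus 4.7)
The plan is to verify the moment map identity $\iota_{\Xuno}\ome_f = dH_1$ by a direct contraction, and then to deduce both invariance statements from it: symplectic invariance from Cartan's formula and $d\ome_f = 0$, and metric invariance from the pseudo-K\"ahler compatibility once we note that $\Psi_\theta$ is biholomorphic.

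The generator of the action (\ref{S1action}) is $\Xuno = -v\,\partialu + u\,\partialv$. The key preliminary observation is that the mixed block of $\ome_f$ can be rewritten as
\begin{equation*}
-2 f' y^2 \bigl[\, \dx \wedge (u\,\du + v\,\devu) + \dy \wedge (u\,\devu - v\,\du)\, \bigr],
\end{equation*}
which isolates the two basic rotation-invariant $1$-forms on the fibre $\C$: the exact form $u\,\du + v\,\devu = \tfrac{1}{2} d(u^2+v^2)$ is annihilated by $\Xuno$, while $u\,\devu - v\,\du$ contracts to $u^2 + v^2$. The horizontal block $\propto \dx\wedge\dy$ contributes nothing, and the vertical block $-\tfrac{4}{3} f' y^3\,\du\wedge\devu$ contracts to $\tfrac{4}{3} f' y^3 (u\,\du + v\,\devu)$. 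Setting $t = y^3(u^2+v^2)$, the identity $dt = 3y^2(u^2+v^2)\,\dy + 2y^3(u\,\du + v\,\devu)$ makes it transparent that the sum of these contractions equals $\tfrac{2}{3} f'(t)\,dt = dH_1$. Equivariance of $H_1$ under the abelian $S^1$-action is automatic from the formula.

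Granted $\iota_{\Xuno}\ome_f = dH_1$ together with $d\ome_f = 0$, Cartan's magic formula immediately gives $\mathcal L_{\Xuno}\ome_f = d\,\iota_{\Xuno}\ome_f = d^2 H_1 = 0$, hence $\Psi_\theta^*\ome_f = \ome_f$. For the metric, $\Psi_\theta$ is holomorphic for the product complex structure $\i$ on $\Hyp \times \C$ (the differential is the identity on $(x,y)$ and a rotation on $(u,v)$, both of which commute with $J$), so $\Psi_\theta^*\i = \i$; combined with the pseudo-K\"ahler compatibility $\g_f(X, Y) = \ome_f(X, \i Y)$ from Theorem \ref{teo:rungitamburelli}, this forces $\Psi_\theta^*\g_f = \g_f$. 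Since the fixed locus $\Hyp \times \{0\}$ is preserved, everything descends via Corollary \ref{cor:cubictorusandproperlyconvex} to $\deft \cong \Hyp \times \C^*$, and the stated formula for $H_1$ is already defined on the complement.

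I do not anticipate any real obstacle. The only ingredient that needs any unpacking is the factored rewriting of the mixed block of $\ome_f$ above, which is precisely what causes the contraction with $\Xuno$ to collapse into a single total differential; the remainder of the argument is a purely formal consequence of symplectic and K\"ahler identities.
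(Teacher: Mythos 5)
Your proposal is correct. Note that the paper does not actually prove this theorem: it is imported verbatim from \cite{rungi2021pseudo}, so there is no internal argument to compare against, and your direct verification has to stand on its own. It does. I checked the central contraction: with $X=-v\,\partialu+u\,\partialv$ the generator of (\ref{S1action}), the $\dx\wedge\dy$ block dies, the vertical block contributes $\tfrac{4}{3}f'y^3(u\,\du+v\,\devu)$, and in the rewritten mixed block only the $\dy\wedge(u\,\devu-v\,\du)$ term survives, giving $2f'y^2(u^2+v^2)\,\dy$; the sum is exactly $\tfrac{2}{3}f'(t)\,\mathrm dt=\mathrm dH_1$ for $t=y^3(u^2+v^2)$, consistent with the paper's conventions $\omega(\mathbb X_f,\cdot)=\mathrm df$ and $\mathrm d\mu^\xi=\iota_{X_\xi}\omega$. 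The deduction of $\Psi_\theta^*\ome_f=\ome_f$ from Cartan's formula and $\mathrm d\ome_f=0$, and of $\Psi_\theta^*\g_f=\g_f$ from holomorphy of $\Psi_\theta$ plus the compatibility of $(\g_f,\i,\ome_f)$, is sound (only a harmless sign: with the paper's convention $\ome(X,Y)=\g(X,\i Y)$ one has $\g(X,Y)=-\ome(X,\i Y)$, which does not affect the conclusion). One point worth making explicit if you write this up: the paper defines $\i$ only at points $(i,w)$ and extends it by $\SL(2,\R)$-invariance, so you should observe that the action $(z,w)\mapsto\big(\tfrac{az+b}{cz+d},(cz+d)^3w\big)$ is holomorphic for the product structure, whence $\i$ is the product structure everywhere and $\Psi_\theta$ is indeed $\i$-holomorphic.
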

Since the $\SL(2,\R)$-action defined in Section \ref{sec:3.1} preserves $\Hyp\times\{0\}$ as well, we get 
\begin{theorem}[\cite{rungi2021pseudo}]\label{teo:rungitamburelli3}
The $\SL(2,\R)$-action on $\deft$ is Hamiltonian with respect to $\ome_f$ and with moment map $\mu:\deft\to\Lsl(2,\R)^*$ given by
\begin{equation}
    \langle \mu(z,w),X \rangle=\bigg(1-f\big(\Imm(z)^3|w|^2\big)\bigg)\tr(j(z)X),\quad X\in\Lsl(2,\R)
\end{equation}where $j:\Hyp\to\almost$ is the map in (\ref{kahlerisometryalmost}).
\end{theorem}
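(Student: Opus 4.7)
The plan is to reduce the verification of the two moment map properties (Definition \ref{def:momentmap}) to an algebraic identity on a basis of $\mathfrak{sl}(2,\R)$, exploiting the $\SL(2,\R)$-invariance of both $\ome_f$ (Theorem \ref{teo:rungitamburelli}) and of the function $t(z,w) := \Imm(z)^3|w|^2$. Invariance of $t$ is immediate from $\Imm(g\cdot z) = \Imm(z)/|cz+d|^2$ together with $|(cz+d)^3 w| = |cz+d|^3|w|$. Since by Theorem \ref{teo:rungitamburelli} the action is already symplectic, only existence and the explicit form of the moment map need to be checked.

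First I would compute the fundamental vector fields. Differentiating the action formula at $t=0$, for $X = \begin{pmatrix} a & b \\ c & -a \end{pmatrix}\in\mathfrak{sl}(2,\R)$ one finds
\begin{equation*}
X^\#(z,w) \;=\; (2az + b - cz^2)\,\partial_z \;+\; 3(cz - a)\,w\,\partial_w \ .
\end{equation*}
As a consequence $X^\# t = 0$, i.e.\ $X^\#$ is tangent to the level sets of $t$; this is the infinitesimal counterpart of the invariance just noted, and it will be the key simplification below. I would then fix the standard basis $H = \mathrm{diag}(1,-1)$, $E = E_{12}$, $F = E_{21}$ and write each $X^\#$ in the real coordinates $(x,y,u,v)$.

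Next I would verify the infinitesimal condition $d\mu^X = \iota_{X^\#}\ome_f$ on each basis element. By the Leibniz rule,
\begin{equation*}
d\mu^X \;=\; -f'(t)\,dt \cdot \tr(j(z)X) \;+\; (1 - f(t))\,d\tr(j(z)X) \ .
\end{equation*}
Matching with the contraction $\iota_{X^\#}\ome_f$ proceeds in two parts. The piece not involving $f'$, namely $(1-f)\,d\tr(j(z)X)$, has to match the contribution coming from the term $(-1+f)y^{-2}\,dx\wedge dy$ in $\ome_f$; this reduces to the classical identity that $\tr(j(z)X)$ is a Hamiltonian function for the Möbius action of $\SL(2,\R)$ on $(\Hyp, -dx\wedge dy / y^2)$, which is standard for the hyperbolic Kähler structure. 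The $f'$-pieces on the right — those proportional to $u$ and $v$ in the off-diagonal blocks of $\ome_f$ contracted with both the $\partial_z$ and $\partial_w$ components of $X^\#$, together with $-\tfrac{4}{3}f'y^3\,du\wedge dv$ — must collectively equal $-f'\,dt\cdot\tr(j(z)X)$. Since all objects involved are $\SL(2,\R)$-covariant, it suffices to verify this identity at a single point in each fiber of $t$; choosing $z=i$, where $j(i) = \bigl(\begin{smallmatrix} 0 & -1 \\ 1 & 0 \end{smallmatrix}\bigr)$ and $\tr(j(i)X) = b - c$, the cross-terms collapse to a manageable identity on the three basis elements.

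Finally, the equivariance property (i) of Definition \ref{def:momentmap} is automatic: since $\mathfrak{sl}(2,\R)$ is semisimple, $H^1(\mathfrak{sl}(2,\R),\R)$ and $(\mathfrak{sl}(2,\R)^*)^{\SL(2,\R)}$ both vanish, so any moment map is equivariant and is unique. Alternatively one verifies equivariance directly using $j(g\cdot z) = (\Ad g)(j(z))$ for $g\in\SL(2,\R)$, which follows from the $\PSL(2,\R)$-equivariance of the Kähler isometry $j$ of Lemma \ref{lem:almostkahler}, together with the $\SL(2,\R)$-invariance of $1-f(t)$ established above. The main obstacle is the book-keeping in the second step, especially for the nilpotent generator $F$ whose fundamental vector field has both horizontal and vertical components and therefore interacts with essentially every summand of $\ome_f$; the decisive simplification is $X^\# t = 0$, which rules out any extra $f'$-terms on the left-hand side and reduces the matching of $f'$-pieces to a short algebraic identity checked on the basis.
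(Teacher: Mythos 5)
The paper itself offers no proof of Theorem \ref{teo:rungitamburelli3}; it is quoted from \cite{rungi2021pseudo}, so there is no in-text argument to compare yours against, and I can only assess the proposal on its own merits. Your direct-verification strategy is sound. The fundamental vector field $(2az+b-cz^2)\partial_z+3(cz-a)w\,\partial_w$ is correct (for $X=\mathrm{diag}(1,-1)$ it reproduces exactly the field $V_\xi=2(x\partial_x+y\partial_y)-3(u\partial_u+v\partial_v)$ that the paper writes down in its lemma computing $H_2$), and your splitting of the identity $\iota_{X^\#}\ome_f=d\mu^X$ into a $(1-f)\,d\tr(j(z)X)$ part, matched against $(-1+f)y^{-2}dx\wedge dy$ via the classical hyperbolic moment map, plus an $f'$-part that must equal $-f'\,dt\cdot\tr(j(z)X)$, is exactly how the computation organizes itself: I carried out the diagonal generator in full and the cancellations occur as you predict. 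The reduction to $z=i$ via transitivity of $\SL(2,\R)$ on the level sets of $t$ is legitimate once equivariance of the candidate $\mu$ is known, and your identity $j(g\cdot z)=\Ad(g)\,j(z)$ together with the invariance of $t$ supplies that independently.

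Two caveats. First, your claim that semisimplicity forces \emph{any} solution of $d\mu^X=\iota_{X_\xi}\ome_f$ to be equivariant is not correct as stated: adding a nonzero constant $c\in\Lsl(2,\R)^*$ to an equivariant moment map yields a non-equivariant map that still satisfies condition (ii) of Definition \ref{def:momentmap}; what Whitehead's lemmas actually give is the existence and uniqueness of the \emph{equivariant} moment map. Your alternative direct verification of property (i) is what genuinely closes this point, so the defect is only in the first phrasing. Second, the matching of the $f'$-pieces for the generators $E_{12}$ and $E_{21}$ --- the computational heart of the statement --- is described but not carried out; as written, the proposal is a correct and workable plan rather than a finished proof.
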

%From now on we will focus on the case when $f(t)=-t$ for all $t\ge 0$. In particular, the Hamiltonian function of the $S^1$-action is given by $H_1(z,w)=-\frac{2}{3}y^3(u^2+v^2)$. \\ \\
Notice that inside $\SL(2,\R)$ there is the subgroup of diagonal matrices with determinant equal to one, namely
\begin{equation}
    \bigg\{\begin{pmatrix}\lambda & 0 \\ 0 & \frac{1}{\lambda} \end{pmatrix} \ \bigg| \ \lambda\in\R^*\bigg\}<\SL(2,\R) \ .
\end{equation}In particular, such a subgroup can be identified with a copy of $\R^*$ which still acts in a Hamiltonian fashion (Lemma \ref{lem:restrictedhamiltonian}) on the space $\deft$.
\begin{lemma}
Let $\R^*$ be a copy of the subgroup of diagonal matrices in $\SL(2,\R)$ and consider its restricted Hamiltonian action on $\deft$, then the Hamiltonian function is given by $$H_2(z,w)=2\frac{x}{y}\big(1-f(y^3|w|^2)\big) \ .$$
\end{lemma}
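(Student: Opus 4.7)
The plan is to apply Lemma \ref{lem:restrictedhamiltonian} directly to the closed subgroup $\R^{*}<\SL(2,\R)$ of diagonal matrices, using the explicit moment map for the full $\SL(2,\R)$-action provided by Theorem \ref{teo:rungitamburelli3}. Since the subgroup is one-dimensional, the restricted moment map takes values in $\mathfrak{h}^{*}\cong\R$, so the whole task reduces to choosing a generator of $\mathfrak{h}$ and pairing the functional $\mu(z,w)|_{\mathfrak{h}}$ against it.

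First, I would parametrize the subgroup by $\lambda=e^{t}$, so that the one-parameter subgroup $\diag(e^{t},e^{-t})$ has infinitesimal generator
\begin{equation*}
X=\begin{pmatrix} 1 & 0 \\ 0 & -1 \end{pmatrix}\in\Lsl(2,\R),
\end{equation*}
which spans $\mathfrak{h}$. By Lemma \ref{lem:restrictedhamiltonian}, the moment map of the restricted $\R^{*}$-action is $\mu_{\R^{*}}=|_{\mathfrak{h}}\circ\mu$, and therefore the associated Hamiltonian function is
\begin{equation*}
H_{2}(z,w)=\langle\mu_{\R^{*}}(z,w),X\rangle=\langle\mu(z,w),X\rangle.
\end{equation*}

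Next, I would plug in the explicit expression of $j(z)$ from \eqref{kahlerisometryalmost} and compute
\begin{equation*}
j(z)\,X=\begin{pmatrix} x/y & -(x^{2}+y^{2})/y \\ 1/y & -x/y \end{pmatrix}\begin{pmatrix} 1 & 0 \\ 0 & -1 \end{pmatrix}=\begin{pmatrix} x/y & (x^{2}+y^{2})/y \\ 1/y & x/y \end{pmatrix},
\end{equation*}
so that $\tr(j(z)X)=2x/y$. Substituting in the formula of Theorem \ref{teo:rungitamburelli3} yields
\begin{equation*}
H_{2}(z,w)=\bigl(1-f(\Imm(z)^{3}|w|^{2})\bigr)\tr(j(z)X)=2\frac{x}{y}\bigl(1-f(y^{3}|w|^{2})\bigr),
\end{equation*}
which is the claimed expression. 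There is no genuine obstacle here beyond fixing conventions: the only subtle point is making the identification $\R^{*}\cong\mathbb{R}$ compatible with the choice of infinitesimal generator, which is why one must go through the parametrization $\lambda=e^{t}$ rather than directly differentiating in $\lambda$ at $\lambda=1$ (both give the same answer up to rescaling, but only the exponential parametrization reproduces the factor $2$ as stated).
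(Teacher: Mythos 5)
Your proof is correct and follows essentially the same route as the paper's: apply Lemma \ref{lem:restrictedhamiltonian}, take the generator $\xi=\diag(1,-1)$ of $\mathfrak h$, and compute $\tr(j(z)\xi)=2x/y$ from the explicit formula \eqref{kahlerisometryalmost}. Your closing caveat is unnecessary (differentiating $\diag(\lambda,\lambda^{-1})$ at $\lambda=1$ yields exactly the same generator $\diag(1,-1)$, so no rescaling discrepancy arises), but this does not affect the argument.
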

\begin{proof}
The Lie algebra of $\R^*$ can be identified with $$\mathfrak h:=\bigg\{\begin{pmatrix}\alpha & 0 \\ 0 & -\alpha 
\end{pmatrix} \ \bigg| \ \alpha\in\R\bigg\} \ .$$ By Lemma \ref{lem:restrictedhamiltonian}, if $\mu$ denotes the moment map for the $\SL(2,\R)$-action of Theorem \ref{teo:rungitamburelli3}, then the associated moment map for the restricted $\R^*$-action $\mu_\mathfrak h:\deft\to\mathfrak h^*$ is $$\mu^X_\mathfrak h(z,w) =\big(1-f(y^3|w|^2)\big)\tr(j(z)X),$$ where $X\in\mathfrak h$. Let $\xi:=\begin{pmatrix}1 & 0 \\ 0 & -1
\end{pmatrix}\in\mathfrak h$, then the Hamiltonian function $H_2:\deft\to\R$ is $H_2(z,w):=\mu^\xi_\mathfrak h(z,w)$, given that $\mathrm d\mu^\xi_\mathfrak h=\ome_f(V_\xi,\cdot)$, where $$ V_\xi=2\bigg(x\frac{\partial}{\partial x}+y\frac{\partial}{\partial y}\bigg)-3\bigg(u\frac{\partial}{\partial u}+v\frac{\partial}{\partial v}\bigg)$$ is the infinitesimal generator of the action. Finally, since $$\tr(j(z)\xi)=\tr\bigg(\begin{pmatrix}\frac{x}{y} & -\frac{x^2+y^2}{y} \\ \frac 1{y} & -\frac x{y}
\end{pmatrix}\cdot\begin{pmatrix} 1 & 0 \\ 0 & -1
\end{pmatrix}\bigg)=2\frac{x}{y}$$ we get $\displaystyle H_2(z,w)=2\frac{x}{y}\big(1-f(y^3|w|^2)\big)$. 
\end{proof}
\subsection{Global Darboux coordinates}\label{sec:4.2}
In this section we prove Theorem \ref{thmB}, namely the main result regarding the symplectic geometry of $\big(\deft,\ome_f\big)$.
\begin{proposition}\label{prop:deftintegrablesystem}
The Hamiltonian system $(\deft,\ome_f,H_1)$ is completely integrable. The integral of motions are given by $$H_1(z,w)=\frac{2}{3}f\big(y^3|w|^2\big),\quad H_2(z,w)=2\frac{x}{y}\big(1-f(y^3|w|^2)\big) \ .$$
\end{proposition}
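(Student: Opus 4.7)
The plan is to verify directly the two defining conditions of complete integrability for the pair $(H_{1},H_{2})$ on the four-dimensional symplectic manifold $(\deft,\ome_{f})$: pairwise involutivity with respect to the Poisson bracket induced by $\ome_{f}$, and pointwise linear independence of $\mathrm{d}H_{1}$ and $\mathrm{d}H_{2}$ on $\deft$. Since one integral of motion can always be taken to be the Hamiltonian of the system, it suffices to treat $H_{1}$ and $H_{2}$ on equal footing.

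For the involution condition, the key observation is to exploit the Hamiltonian nature of the $\R^{*}$-action. By the Lemma immediately preceding the proposition, the Hamiltonian vector field $\Xdue$ coincides with the infinitesimal generator $V_{\xi}$ of the diagonal subgroup $\R^{*}<\SL(2,\R)$. A direct inspection of the action shows that the matrix $\mathrm{diag}(\lambda,\lambda^{-1})$ sends $(z,w)$ to $(\lambda^{2}z,\lambda^{-3}w)$, so that $y\mapsto\lambda^{2}y$ and $|w|^{2}\mapsto\lambda^{-6}|w|^{2}$. The combination $y^{3}|w|^{2}$, on which $H_{1}$ depends via $f$, is therefore $\R^{*}$-invariant. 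Consequently $\Xdue(H_{1})=0$, which by definition of the Poisson bracket gives $\{H_{1},H_{2}\}_{\ome_{f}}=\ome_{f}(\Xuno,\Xdue)=0$.

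For the independence of the differentials, the plan is a direct computation in the global coordinates $(x,y,u,v)$ on $\Hyp\times\C^{*}$. One sees immediately that $\mathrm{d}H_{1}$ has no $\mathrm{d}x$-component, while the $\mathrm{d}x$-component of $\mathrm{d}H_{2}$ equals $\frac{2}{y}\bigl(1-f(y^{3}|w|^{2})\bigr)$, which is strictly positive because $f\leq 0$ by the assumptions on $f$. Hence any linear relation $\alpha\,\mathrm{d}H_{1}+\beta\,\mathrm{d}H_{2}=0$ forces $\beta=0$. Moreover, on $\deft\cong\Hyp\times\C^{*}$ one has $|w|^{2}>0$, $y>0$, and $f'(t)<0$ for every $t\geq 0$, so the $\mathrm{d}y$-coefficient $2y^{2}|w|^{2}f'(y^{3}|w|^{2})$ of $\mathrm{d}H_{1}$ is nowhere zero; this forces $\alpha=0$ as well.

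There is no real obstacle in this proof: the only conceptual step is recognising $H_{2}$ as a moment map component for a subgroup action that preserves the function $y^{3}|w|^{2}$, after which both involutivity and independence reduce to short observations. The verification that the Hamiltonian vector fields $\Xuno,\Xdue$ are complete, needed later to apply the theory of Section \ref{sec:completelagrangianfibrations} to this system, is a separate issue deferred to the subsequent analysis.
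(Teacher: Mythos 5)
Your proof is correct, but the involutivity step takes a genuinely different route from the paper. The paper's proof writes down the two Hamiltonian vector fields explicitly, $\Xuno=u\partial_v-v\partial_u$ and $\Xdue=2(x\partial_x+y\partial_y)-3(u\partial_u+v\partial_v)$, observes that they are pointwise linearly independent, and then verifies $\ome_f(\Xuno,\Xdue)=0$ by expanding the symplectic form term by term in the coordinate basis. You instead note that $\Xdue$ equals the infinitesimal generator $V_\xi$ of the diagonal $\R^*$-action (which does follow from the preceding Lemma together with nondegeneracy of $\ome_f$), that $y^3|w|^2$ is invariant under $(z,w)\mapsto(\lambda^2 z,\lambda^{-3}w)$, and hence that $\{H_1,H_2\}=\mathrm{d}H_1(\Xdue)=\Xdue(H_1)=0$ with the paper's convention $\omega(\mathbb X_f,Y)=\mathrm df(Y)$. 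This symmetry argument is cleaner and avoids the brute-force expansion entirely; what it costs you is the explicit formula for $\Xuno$, which the paper's computation produces as a by-product and which is then reused in Remark \ref{rem:fiberofH} and in the proof of Theorem A (completeness of the flows) --- you correctly flag that as deferred. Your independence check, via the $\mathrm dx$-coefficient of $\mathrm dH_2$ being $\tfrac{2}{y}(1-f)>0$ and the $\mathrm dy$-coefficient of $\mathrm dH_1$ being $2y^2|w|^2f'\neq 0$ on $\Hyp\times\C^*$, is equivalent to the paper's assertion that the vector fields are pointwise independent (the two are interchangeable by nondegeneracy of $\ome_f$), and in fact you supply more detail than the paper does at that point.
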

\begin{proof}
Let $\Xuno,\Xdue$ be the Hamiltonian vector fields associated with $H_1,H_2$. An explicit expression is given by $$\Xuno=u\frac{\partial}{\partial v}-v\frac{\partial}{\partial u},\quad \Xdue=2\bigg(x\frac{\partial}{\partial x}+y\frac{\partial}{\partial y}\bigg)-3\bigg(u\frac{\partial}{\partial u}+v\frac{\partial}{\partial v}\bigg) \ .$$ It is clear that they are point-wise linearly independent on $\deft$, hence to end the proof we only need to show that they are involutive. The symplectic form is \begin{align*}\ome_f&=\bigg(-1+f-3f'y^3(u^2+v^2)\bigg)\frac{\dx\wedge\dy}{y^2}-\frac 4{3}f'y^3\du\wedge\devu \\&-2y^2f'\bigg(u(\dx\wedge\du+\dy\wedge\devu)+v(\du\wedge\dy-\devu\wedge\dx)\bigg) \ .\end{align*}Moreover, \begin{align*}
    \ome_f(\Xuno,\Xdue)=2&
    \bigg(ux\ome_f\bigg(\partialv,\partialx\bigg)+uy\ome_f\bigg(\partialv,\partialy\bigg)-vx\ome_f\bigg(\partialu,\partialx\bigg)- \\ &vy\ome_f\bigg(\partialu,\partialy\bigg)\bigg)-3\bigg(u^2\ome_f\bigg(\partialv,\partialu\bigg)-v^2\ome_f\bigg(\partialu,\partialv\bigg)\bigg) \ .
\end{align*}It is easy to see that the term at the right hand side of the last equality is equal to zero, hence we get the claim.
\end{proof}
Let $H:=(H_1,H_2):(\deft,\ome_f)\to B$ be the Lagrangian fibration associated with the above completely integrable Hamiltonian system (see Remark \ref{rem:lagrangianfibration}), where $B:=H\big(\deft\big)\subset\R^2$. Using the explicit expression of the integral of motions and the properties of the function $f$, it is clear that $B$ is homeomorphic to $U:=\big\{(u_1,u_2)\in\R^2 \ | \ u_1<0\big\}$, hence it is contractible. In particular, any $b=(b_1,b_2)\in B$ is a regular value for $H$ and each fiber $$H^{-1}(b)=\big\{(z,w)\in\deft \ | \ \frac{2}{3}f\big(y^3|w|^2\big)=b_1, \ 2\frac{x}{y}\big(1-f(y^3|w|^2)\big)=b_2\big\}$$is diffeomorphic to $\R\times S^1$.
\begin{remark}\label{rem:fiberofH}
The fact that each fiber is diffeomorphic to $\R\times S^1$ can be seen directly from part (2) of Theorem \ref{teo:arnlod}, since the vector fields $\Xuno,\Xdue$ are complete on $H^{-1}(b)$, for each $b\in B$. Indeed $\Xuno$ is the generator of the counter clock-wise rotation in the plane and the integral curve of $\Xdue$ passing through the point $(z,w)$ is $\gamma_{(z,w)}(t)=(e^{2t}z,e^{-3t}w)$, which is defined for all $t\in\R$.
\end{remark}
\begin{manualtheorem}A\label{teo:B}
Let $(s,\theta)\in\R\times S^1$ be the angle coordinates of $(\deft, H_1, \ome_f)$ given by the Arnold-Liouville theorem. Then, $\big\{\theta, H_1, s, H_2\big\}$ is a global Darboux frame for $\ome_f$.
\end{manualtheorem}
\begin{proof}
The Lagrangian fibration $H:(\deft,\ome_f)\to B$ is the one arising from the completely integrable Hamiltonian system $(\deft,\ome_f,H_1)$. Since the vector fields $\Xuno,\Xdue$ are complete on each fiber $H^{-1}(b)$, by Proposition \ref{prop:completefibration} the Lagrangian fibration $H:(\deft,\ome_f)\to B$ is complete (see Definition \ref{def:completefibration}). Moreover, the base $B$ is a contractible open subset of $\R^2$. Using Corollary \ref{cor:lagrangianglobalsection} we get the existence of a global Lagrangian section $\sigma:B\to\deft$. In particular, $\sigma(B)$ is a Lagrangian submanifold of $(\deft,\ome_f)$, $\sigma(b)\in H^{-1}(b)$ for each $b\in B$ and $H\circ\sigma=\text{Id}_B$. Let $b=(b_1,b_2)\in B$, then the vector fields $$\frac{\partial}{\partial H_i}=\mathrm d\sigma\bigg(\frac{\partial}{\partial b_i}\bigg), \quad i=1,2$$ are tangent to $\sigma(B)$ and they generate a Lagrangian subspace of $T_{\sigma(b)}\deft$. In fact, \begin{align*}(\ome_f)_{\sigma(b)}\bigg(\partialHuno,\partialHdue\bigg)&=(\ome_f)_{\sigma(b)}\bigg(\mathrm d\sigma\bigg(\partialbuno\bigg),\mathrm d\sigma\bigg(\partialbdue\bigg)\bigg) \\ &=(\sigma^*\ome_f)_b\bigg(\partialbuno,\partialbdue\bigg) \\ &=0 \ . \tag{The section $\sigma$ is Lagrangian}\end{align*} Let $(s,\theta)\in\R\times S^1$ be the angle coordinates given by the Arnlod-Liouville Theorem, then the vector fields $\displaystyle\frac{\partial}{\partial\theta},\frac{\partial}{\partial s}$ are point-wise tangent to the fiber of $H:(\deft,\ome_f)\to B$. In particular, they correspond to $\mathbb X_{H_1}$ and $\mathbb X_{H_2}$ respectively. Hence,  \begin{equation*}
    \ome_f\bigg(\partialtheta,\partials\bigg)=\ome_f\big(\Xuno,\Xdue\big)=0 \ . \tag{Involution} 
\end{equation*}
Let us denote the coordinate $s$ with $g_1$ and $\theta$ with $g_2$. In order to conclude the proof of the theorem, we need to show that \begin{equation}
    (\ome_f)_x\bigg(\partialgi,\partialHj\bigg)=\delta^i_j,\quad\forall x\in\deft \ .
\end{equation}Suppose first $x\in\sigma(B)$, hence $(x_1,x_2)=\sigma(b_1,b_2)$ for some $(b_1,b_2)\in B$. Then, \begin{align*}
    (\ome_f)_x\bigg(\partialgi,\partialHj\bigg)&=(\ome_f)_x\bigg(\mathbb X_{H_i},\mathrm d_b\sigma\bigg(\partialbj\bigg)\bigg) \\ &=\mathrm d_xH_i\bigg(\mathrm d_b\sigma\bigg(\partialbj\bigg)\bigg) \tag{\ref{hamiltonianfield}} \\ &=\mathrm d_b\big(H_i\circ\sigma\big)\bigg(\partialbj\bigg) \tag{Chain rule} \\ &=\delta^i_j \ . \tag{$H_i\circ\sigma=b_i$}
\end{align*}Now let $x$ be an arbitrary point of $\deft$ and let $\Psi_i^t$ be the Hamiltonian flow associated with $H_i$. Since the flow action on the fiber $H^{-1}(b)$ is transitive, we can always assume that $x=\Psi_i^t\big(\sigma(H(x))\big)$, where $b=H(x)$. In particular, we have that the vector field $\displaystyle\partialHj$ computed at $x=\Psi_i^t\big(\sigma(H(x))\big)$ is equal to $\displaystyle\mathrm d\Psi_i^t\bigg(\frac{\partial}{\partial H_j}\bigg)$, where now the vector field inside the differential of $\Psi_i^t$ is computed at $\sigma(H(x))$. Hence, \begin{align*}
    (\ome_f)_x\bigg(\partialgi,\partialHj\bigg)&=(\ome_f)_x\bigg(\mathbb X_{H_i},\mathrm d\Psi_i^t\bigg(\frac{\partial}{\partial H_j}\bigg)\bigg) \\ &=\bigg(\big((\Psi_i^t)^{-1}\big)^*\ome_f\bigg)_x\bigg(\mathbb X_{H_i}, \mathrm d\Psi_i^t\bigg(\frac{\partial}{\partial H_j}\bigg)\bigg) \tag{$\Psi_i^t$ preserves $\ome_f$} \\ &=(\ome_f)_{\sigma(H(x))}\bigg(\big(\mathrm d\Psi_i^t\big)^{-1}\bigg(\mathbb X_{H_i}\bigg), \partialHj\bigg) \\ &=(\ome_f)_{\sigma(H(x))}\bigg(\mathbb X_{H_i},\partialHj\bigg) \tag{$\Psi_i^t$ is the flow associated with $H_i$} \\ &=\delta_i^j \tag{$\sigma(H(x))\in\sigma(B)$}
\end{align*}
\end{proof}

\section{Curvature properties}\label{sec:3}
In this section we show that the copy of the hyperbolic plane $\Hyp\times\{0\}\subset\Hyp\times\C$ is the only embedded submanifold with scalar curvature equal to $1$, whenever $f(t)=-kt$ with $k>0$. Initially, this last estimate will allow us to compute the connected component of the identity of the isometry group and secondly, we will deduce the expression for an arbitrary isometry.

\subsection{The Ricci tensor and the scalar curvature}
A \emph{K\"ahler manifold} is a particular case of a pseudo-K\"ahler one, namely when the pseudo-Riemannian metric has index equal to zero. For this very reason it is natural to ask whether some properties of K\"ahler manifolds still holds in this more general setting. Now we briefly recall the definition of some curvature tensors defined on K\"ahler manifolds and we will explain how their formulae still hold in the pseudo-Riemannian setting as long as the pseudo-metric is of neutral signature. \\ \\ Let $(M,g,I)$ be a K\"ahler manifold of complex dimension $n$. The tensor $I$ can be extended by $\C$-linearity on the complexified tangent bundle $T_\C M:=TM\otimes_{\R}\C$. Since $I^2=-\mathds{1}$ there is an eigenbundle decomposition $T_\C M=T^{1,0}\oplus T^{0.1}M$, where $$T^{1,0}M:=\{X\in T_\C M \ | \ I(X)=i\cdot X\},\qquad T^{0,1}M:=\{X\in T_\C M \ | \ I(X)=-i\cdot X\} \ .$$
The bundle $T^{1,0}M$ is called the \emph{holomorphic tangent bundle} and $T^{0,1}M$ the \emph{anti-holomorphic tangent bundle} of $M$, in particular they are the conjugate of each other. If $(z_1,\dots,z_n)$ are local holomorphic coordinates on $M$, the $n$-dimensional complex vector space $T^{1,0}M$ is generated by $\{\frac{\partial}{\partial z_1},\dots\frac{\partial}{\partial z_n}\}$. Since $z_k=x_k+iy_k$ for each $k=1,\dots,n$ we have $$\frac{\partial}{\partial z_k}=\frac{1}{2}\bigg(\frac{\partial}{\partial x_k}-i\frac{\partial}{\partial y_k}\bigg),\qquad \frac{\partial}{\partial \bar z_k}=\frac{1}{2}\bigg(\frac{\partial}{\partial x_k}+i\frac{\partial}{\partial y_k}\bigg),\quad\forall k=1,\dots n \ .$$
If we denote with $g^\C$ the $\C$-linear extension of $g$ to $T_\C M$, then locally it can be written as $$g^\C:=\sum_{j,k}g^\C_{j\bar k}\big(\mathrm dz^j\otimes\mathrm d\bar z^k+\mathrm d\bar z^k\otimes\mathrm dz^j\big)$$ where $g^\C_{j\bar k}:=g^\C\big(\frac{\partial}{\partial z_j},\frac{\partial}{\partial\bar z_k}\big)=\frac{1}{4}\bigg(g\big(\frac{\partial}{\partial x_j},\frac{\partial}{\partial x_k}\big)+g\big(\frac{\partial}{\partial y_j},\frac{\partial}{\partial y_k}\big)\bigg)$, since the Hermitian condition implies that $g^\C_{jk}=g^\C_{\bar j\bar k}=0$ and the symmetry that $\overline{g^\C_{j\bar k}}=g^\C_{k\bar j}$ for each $j,k=1,\dots,n$.\\ \\ In the following, by abuse of notation, we will denote with $g$ the metric extended by $\C$-linearity on $T_\C M$. If $\nabla$ denotes the Levi-Civita connection of $g$, then the only non vanishing Christoffel symbols are $$\Gamma_{jk}^i:=g^{i\bar l}\frac{\partial g_{k\bar l}}{\partial z_j},\qquad \Gamma_{\bar j\bar k}^{\bar i}:=\overline{\Gamma_{jk}^i}$$ where $g^{j\bar k}$ denotes the inverse metric computed on $\frac{\partial}{\partial z_j},\frac{\partial}{\partial\bar z_k}$. The Riemann curvature tensor $R\in\Gamma\big(T_\C^*M\otimes T_\C M\otimes\End(T_\C M)\big)$ of $\nabla$ is given by $${{R_i}^j}_{k\bar l}=-\frac{\Gamma_{ki}^j}{\partial\bar z_l},\qquad R_{\alpha\bar\beta\gamma\bar\delta}={{R_\alpha}^j}_{\gamma\bar\delta}g_{j\bar\beta} \ . $$As a consequence of the Bianchi identity, the Riemann tensor enjoys the following symmetries $$R_{i\bar j k\bar l}=R_{i\bar l k\bar j}=R_{k\bar j i\bar l}=R_{k\bar l i\bar j} \ . $$ Finally, the Ricci tensor and the scalar curvature are given, respectively, by: $$R_{i\bar j}=g^{k\bar l}R_{i\bar jk\bar l}, \qquad \text{scal}(g)=g^{i\bar j}R_{i\bar j} \ . $$
\begin{remark}
All the properties listed so far hold in the case of pseudo-K\"ahler manifolds, indeed they are only a consequence of the fact that the metric is non-degenerate and that $\nabla g=\nabla I=0$ (see \cite{zheng2001complex}).
\end{remark}
\begin{lemma}\label{lem:scalarepseudo}
Let $(M,g,I)$ be a pseudo-K\"ahler manifold of complex dimension $n$ and of neutral signature, then $$R_{i\bar j}=-\frac{\partial^2}{\partial z_i\partial\bar z_j}\log\big(\det(g)\big)$$
\end{lemma}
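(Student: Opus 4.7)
The plan is to derive the Ricci identity directly from the local formulas for the Christoffel symbols and curvature components recalled just above the lemma, observing that all of those formulas hold for an arbitrary pseudo-K\"ahler metric (neutral signature or not) as long as $g$ is non-degenerate, which is the only ingredient we will use.

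First I would contract the Riemann tensor to produce the Ricci tensor, using the symmetry $R_{i\bar{j}k\bar{l}} = R_{k\bar{l}i\bar{j}}$ recorded in the excerpt, so that
\[
R_{i\bar{j}} \;=\; g^{k\bar{l}} R_{i\bar{j}k\bar{l}} \;=\; g^{k\bar{l}} R_{k\bar{l}i\bar{j}}.
\]
Next I would substitute the formula $R_{k\bar{l}i\bar{j}} = {R_{k}}^{m}{}_{i\bar{j}}\, g_{m\bar{l}}$ with ${R_{k}}^{m}{}_{i\bar{j}} = -\partial_{\bar{z}_j}\Gamma^{m}_{ik}$, and use $g^{k\bar{l}}g_{m\bar{l}} = \delta^{k}_{m}$ to collapse the contraction to
\[
R_{i\bar{j}} \;=\; -\,\partial_{\bar{z}_j}\Gamma^{k}_{ik}.
\]

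The key algebraic step is then to identify the contracted Christoffel symbol with a logarithmic derivative of the determinant. Using $\Gamma^{k}_{ik} = g^{k\bar{l}}\,\partial_{z_i} g_{k\bar{l}}$ from the excerpt, and Jacobi's formula $\partial_{z_i}\log\det(g) = \mathrm{tr}\bigl(g^{-1}\partial_{z_i}g\bigr) = g^{k\bar{l}}\partial_{z_i}g_{k\bar{l}}$ applied to the Hermitian block of $g$ in $(z,\bar{z})$-coordinates, I get
\[
\Gamma^{k}_{ik} \;=\; \partial_{z_i}\log\det(g).
\]
Plugging this into the previous identity yields the claimed formula
\[
R_{i\bar{j}} \;=\; -\frac{\partial^{2}}{\partial z_i \partial\bar{z}_j}\log\det(g).
\]

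The main point worth verifying, rather than a real obstacle, is that Jacobi's formula applies cleanly: since $g$ is pseudo-Hermitian its matrix in complex coordinates has the block form with only $g_{j\bar{k}}$ entries, so $\det g$ reduces to the determinant of the Hermitian block $(g_{j\bar{k}})$, and this determinant is non-vanishing precisely because $g$ is non-degenerate. Neutral signature is not required for the computation itself, only to ensure we are in the setting in which the formulas quoted from \cite{zheng2001complex} have been transcribed, so no extra sign or conjugation issues arise when we write $\log\det(g)$ (in neutral signature $\det g > 0$ automatically). With that verified, the derivation is purely a contraction computation.
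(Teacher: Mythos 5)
Your argument is line-for-line the paper's own proof: contract using the symmetry $R_{i\bar j k\bar l}=R_{k\bar l i\bar j}$ to get $R_{i\bar j}=-\partial_{\bar z_j}\Gamma^{k}_{ik}$, then apply Jacobi's formula to identify $\Gamma^{k}_{ik}$ with $\partial_{z_i}\log\det(g)$, with the same preliminary observation that neutral signature makes $\log\det(g)$ well-defined. The only quibble is your parenthetical that the Hermitian block $(g_{j\bar k})$ has positive determinant in neutral signature --- its sign is actually $(-1)^{s}$ with $s=n/2$ (it is the full real determinant that is automatically positive) --- but since Jacobi's formula applies equally to $\log\lvert\det\rvert$ this does not affect the computation, and the paper glosses over the same point.
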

\begin{proof}
First, notice that $\log\big(\det(g)\big)$ is well-defined since the pseudo-metric $g$ is of neutral signature, hence $\det(g)>0$. Then, by using the formulae above, we get \begin{align*}
    R_{i\bar j}&=g^{k\bar l}R_{k\bar l i\bar j} \tag{symmetry of $R_{i\bar j k\bar l}$} \\ &={{R_p}^p}_{i\bar j} \tag{$R_{k\bar l i\bar j}=g_{p\bar l}{{R_k}^p}_{i\bar j}$} \\ &=-\frac{\partial\Gamma_{ip}^p}{\partial\bar z_j} \\ &=-\frac{\partial}{\partial \bar z_j}\big(g^{p\bar q}\frac{\partial g_{p\bar q}}{\partial z_i}\big) \\ &=-\frac{\partial^2}{\partial z_i\partial\bar z_j}\log\big(\det(g)\big) \ . \tag{Jacobi's formula}
\end{align*} 
\end{proof}
Before computing the Ricci tensor and the scalar curvature of the new metrics, it should be noted that it is sufficient to do the computation at points $(i,u)\in\Hyp\times\C$. In fact, the $\SL(2,\R)$-action introduced in Section \ref{sec:3.1} allows us to reduce to the points $(i,w)$ and the natural $S^1$-action on $\C$, presented in (\ref{S1action}), to the points $(i,u)$, since both actions are isometric (Theorem \ref{teo:rungitamburelli} and Theorem \ref{teo:rungitamburelli2}). Furthermore, we need to write $\det(\g_f)$ and the inverse of the metric $\g_f$, extended by $\C$-linearity on $T_\C(\Hyp\times\C)$, in terms of the coordinates $(z,w)$. We have: \begin{align*}
    &(\g_f^{z\bar z})_{(i,u)}=\frac{1}{2(1-f)}, \quad (\g_f^{w\bar w})_{(i,u)}=\frac{3(1-f+3f'u^2)}{8f'(1-f)} \\ &(\g_f^{z\bar w})_{(i,u)}=i\frac{3u}{4(1-f)}, \quad \det(\g_f)_{(z,w)}=\frac{16}{9}\Imm(z)^2(f')^2(1-f)^2 \ .
\end{align*}
\begin{proposition}\label{lemmascalare}
The Ricci tensor and the scalar curvature of the pseudo-K\"ahler metrics $(\g_f,\i,\ome_f)$ are given by:\begin{align*}
    &(R_{z\bar z})_{(i,u)}=\frac 1{2}+3u^2\bigg(\frac{f'}{1-f}-\frac{f''}{f'}\bigg)+\frac 9{2}u^4G_f \\ &(R_{w\bar w})_{(i,u)}=-2\bigg(\frac{f''}{f'}-\frac{f'}{1-f}\bigg)+2u^2G_f \\ &(R_{z\bar w})_{(i,u)}=i\bigg(3u\bigg(\frac{f''}{f'}-\frac{f'}{1-f}\bigg)-3u^3G_f\bigg) \\ & \normalfont{\text{scal}}(\g_f)_{(i,u)}=\frac{1}{1-f}-\frac 3{4}\frac{f''}{(f')^2}+\frac 3{2}\frac{u^2}{1-f}\bigg(6u^2G_f+\frac{11}{2}\bigg(\frac{f'}{1-f}-\frac{f''}{f'}\bigg)+\frac{G_f(1-f)}{2f'}\bigg)
\end{align*}where $\displaystyle G_f:=\frac{f''(1-f)+(f')^2}{(1-f)^2}-\frac{f'''\cdot f-(f'')^2}{(f')^2} \ .$
\end{proposition}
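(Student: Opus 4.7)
The approach is to invoke Lemma \ref{lem:scalarepseudo}, which reduces the entire computation to taking mixed second derivatives of $F := \log \det(\g_f)$. The determinant is already recalled in the excerpt as $\det(\g_f) = \tfrac{16}{9}\, y^{2} (f')^{2} (1-f)^{2}$, so (using $f'<0$ to write $\log((f')^{2}) = 2\log(-f')$)
\begin{equation*}
    F(x,y,u,v) \;=\; \log\tfrac{16}{9} + 2\log y + 2\log\bigl(-f'(t)\bigr) + 2\log\bigl(1-f(t)\bigr), \qquad t := y^{3}(u^{2}+v^{2}).
\end{equation*}
Because the $\SL(2,\R)$- and $S^{1}$-actions are isometric (Theorem \ref{teo:rungitamburelli} and Theorem \ref{teo:rungitamburelli2}), it suffices to evaluate the Ricci components at base points of the form $(i,u)$, i.e.\ at $x=0,\ y=1,\ v=0$, at which $t=u^{2}$.

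Next I compute $\partial_{x}F,\partial_{y}F,\partial_{u}F,\partial_{v}F$ by the chain rule, using $\partial_{x}t=0$, $\partial_{y}t=3y^{2}(u^{2}+v^{2})$, $\partial_{u}t=2y^{3}u$, $\partial_{v}t=2y^{3}v$. Setting $\Phi(t):=2\log(-f'(t))+2\log(1-f(t))$, a direct differentiation gives
\begin{equation*}
    \Phi'(t)=2\tfrac{f''}{f'}-2\tfrac{f'}{1-f},\qquad \Phi''(t)=-2\,G_{f},
\end{equation*}
reproducing the quantity $G_{f}$ that appears in the statement. Evaluating at $(i,u)$ is then greatly simplified by two observations: first, $\partial_{x}t=0$ annihilates every $x$-derivative of $F$ beyond the zeroth; second, the factor $v$ in $\partial_{v}t$ kills $\partial_{y}\partial_{v}F$ and $\partial_{u}\partial_{v}F$ at $v=0$. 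Only a small number of surviving terms remain.

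The Ricci components follow by passing to complex coordinates via
\begin{equation*}
    4\,\partial_{z}\partial_{\bar z}=\partial_{x}^{2}+\partial_{y}^{2},\quad 4\,\partial_{w}\partial_{\bar w}=\partial_{u}^{2}+\partial_{v}^{2},\quad 4\,\partial_{z}\partial_{\bar w}=\partial_{x}\partial_{u}+i\,\partial_{x}\partial_{v}-i\,\partial_{y}\partial_{u}+\partial_{y}\partial_{v},
\end{equation*}
applying $R_{i\bar j}=-\partial_{i}\partial_{\bar j}F$, and substituting the values $\Phi'(u^{2})$ and $\Phi''(u^{2})$. The claimed expressions for $R_{z\bar z}$, $R_{w\bar w}$ and $R_{z\bar w}$ then emerge after elementary regrouping; note in particular that the imaginary factor $i$ in $R_{z\bar w}$ comes directly from the $-i\,\partial_{y}\partial_{u}$ term above, since the $\partial_{x}\partial_{u}F$ and $\partial_{y}\partial_{v}F$ contributions both vanish at $(i,u)$.

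Finally, the scalar curvature is assembled through $\scalg(\g_f)=\g_f^{i\bar j}R_{i\bar j}$, summed over $i,j\in\{z,w\}$, using the explicit inverse-metric components recalled immediately before the proposition, together with $\g_f^{w\bar z}=\overline{\g_f^{z\bar w}}$ and $R_{w\bar z}=\overline{R_{z\bar w}}$. The two off-diagonal contributions add to a real term, and collecting the coefficients of $1/(1-f)$, $f''/(f')^{2}$, and $G_{f}$ yields the stated formula. The main obstacle is purely bookkeeping: the cancellation that produces the $G_{f}(1-f)/(2f')$ summand in $\scalg(\g_f)_{(i,u)}$ arises from combining the $R_{w\bar w}$-contribution, which carries the $1/f'$ denominator inside $\g_f^{w\bar w}$, with the real part of the $R_{z\bar w}$ terms, and one must keep track of the $(1-f+3f'u^{2})$ numerator of $\g_f^{w\bar w}$ carefully throughout this last step.
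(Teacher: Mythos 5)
Your proposal is correct and follows essentially the same route as the paper: both invoke Lemma \ref{lem:scalarepseudo} applied to $\log\det(\g_f)$, reduce to the points $(i,u)$ via the isometric $\SL(2,\R)$- and $S^1$-actions, and assemble the scalar curvature from the recalled inverse-metric components, your explicit passage through real coordinates being only a bookkeeping variant of the paper's direct $\partial_z\partial_{\bar z}$ computation. (Incidentally, your identity $\Phi''=-2G_f$ confirms that the factor $f'''\cdot f$ in the printed definition of $G_f$ should read $f'''\cdot f'$.)
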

\begin{proof}
Using the formulae above and the symmetries $\overline{R_{i\bar j}}=R_{j\bar i}$, the Ricci tensor is given by $$\text{Ric}_{\g_f}=R_{z\bar z}\mathrm dz\otimes\mathrm d\bar z+R_{w\bar w}\mathrm dw\otimes\mathrm d\bar w+2\Ree(R_{z\bar w})\mathrm dz\otimes\mathrm d\bar w \ .$$ According to Lemma \ref{lem:scalarepseudo}, the components can be computed as $$R_{z\bar z}=-\frac{\partial^2}{\partial z\partial\bar z}\log\big(\det(\g_f)\big),\quad R_{w\bar w}=-\frac{\partial^2}{\partial w\partial\bar w}\log\big(\det(\g_f)\big),\quad R_{z\bar w}=-\frac{\partial^2}{\partial z\partial\bar w}\log\big(\det(\g_f)\big) \ .$$ 
%Since the components depend on the second derivative of the metric, the calculation must initially be done at an arbitrary point $(z,w)\in\Hyp\times\C$ and, only at the very end, it is possible to compute them at the "special" points $(i,u)$. 
Using the expression of $\det(\g_f)$ found above we get $$\log\big(\det(\g_f)\big)=\log\bigg(\frac{16}{9}\bigg)+2\log\big(\Imm(z)\big)+\log\big((f')^2\big)+2\log\big(1-f\big) \ .$$ Finally, recalling that the functions $f,f',f'',f'''$ are all evaluated at $\Imm(z)^3|w|^2$ and using the formula $\displaystyle\frac{\partial}{\partial z}\Imm(z)^l=\frac{l}{(2i)^l}\Imm(z)^{l-1}$ we obtain the desired expression for the components of the Ricci tensor.\newline The scalar curvature is given, by definition, by $$\scalg(\g_f)=\g^{z\bar z}_fR_{z\bar z}+\g^{w\bar w}_fR_{w\bar w}+\g^{z\bar w}_fR_{z\bar w}+\g^{w\bar z}_fR_{w\bar z} \ .$$ Since $\g^{w\bar z}_fR_{w\bar z}=\overline{\g^{z\bar w}_fR_{z\bar w}}$, the final expression is $$\scalg(\g_f)=\g^{z\bar z}_fR_{z\bar z}+\g^{w\bar w}_fR_{w\bar w}+2\Ree(\g^{z\bar w}_fR_{z\bar w}) \ .$$ Now, we can directly compute the scalar curvature at the points $(i,u)$. By a simple, but long enough, direct calculation, one gets the desired formula.
\end{proof}
As explained at the beginning of the paper, these expressions are too complicated to be able to make any estimates on the scalar curvature. On the other hand, on $\Hyp\times\{0\}\subset\Hyp\times\C$ the expression is considerably simplified, indeed given that $f(0)=0$, it follows that \begin{equation}
    \scalg(\g_f)_{(i,0)}=1-\frac{3}{4}\frac{f''(0)}{f'(0)^2} \ .
\end{equation}In particular, if we pick the function $f$ to be of the form $f(t)=-kt$, with $k>0$, it becomes clear that the scalar curvature on $\Hyp\times\{0\}$ is constant and equal to $1$.
\begin{corollary}\label{corscalg}
For any $(i,u)\in\Hyp\times\C^*$ and for $f(t)=-kt$, with $k>0$, the scalar curvature \normalfont{scal}$(\g_f)_{(i,u)}$ is strictly less then $1$.
\end{corollary}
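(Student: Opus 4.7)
The plan is to specialize the general scalar curvature formula of Proposition \ref{lemmascalare} to the one-parameter family $f(t) = -kt$ and then show by a direct algebraic comparison that the resulting rational expression in $u^2$ is strictly less than $1$ whenever $u \neq 0$. Note that it suffices to evaluate at points of the form $(i,u)$ with $u \in \R_{\geq 0}$: every point of $\Hyp \times \C^*$ is in the same $\SL(2,\R) \times S^1$-orbit as such a point, and the scalar curvature is invariant under the isometric actions of Theorem \ref{teo:rungitamburelli} and Theorem \ref{teo:rungitamburelli2}.

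For $f(t) = -kt$, I compute at $t = \Imm(i)^3 |u|^2 = u^2$ the quantities
\[
f = -ku^2, \qquad f' = -k, \qquad f'' = 0, \qquad f''' = 0, \qquad 1-f = 1+ku^2.
\]
Substituting into the definition of $G_f$ kills both the $f''$-term in the numerator of the first fraction and the entire second fraction, leaving
\[
G_f = \frac{k^2}{(1+ku^2)^2}.
\]
Writing $s := ku^2 \geq 0$ for brevity, the three terms in the parenthesis of the scalar curvature formula become
\[
6u^2 G_f = \frac{6sk}{(1+s)^2}, \qquad \frac{11}{2}\Bigl(\frac{f'}{1-f} - \frac{f''}{f'}\Bigr) = -\frac{11k}{2(1+s)}, \qquad \frac{G_f(1-f)}{2f'} = -\frac{k}{2(1+s)}.
\]
Their sum telescopes:
\[
\frac{6sk}{(1+s)^2} - \frac{6k}{1+s} = \frac{6sk - 6k(1+s)}{(1+s)^2} = -\frac{6k}{(1+s)^2}.
\]
The leading $-\tfrac{3}{4}\tfrac{f''}{(f')^2}$ term vanishes, so plugging back in gives
\[
\scalg(\g_f)_{(i,u)} = \frac{1}{1+s} - \frac{9s}{(1+s)^3} = \frac{1 - 7s + s^2}{(1+s)^3}.
\]

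It remains to verify that this last expression is strictly less than $1$ for $s > 0$. Multiplying through by the positive denominator $(1+s)^3$, the inequality $\scalg(\g_f)_{(i,u)} < 1$ is equivalent to
\[
1 - 7s + s^2 < 1 + 3s + 3s^2 + s^3,
\]
i.e.\ $0 < s(s^2 + 2s + 10)$, which holds for every $s > 0$ since the quadratic factor has discriminant $-36 < 0$ and is therefore positive. This yields the claim. There is no real obstacle here beyond the bookkeeping: because $f'' \equiv 0$ and $f''' \equiv 0$ the ostensibly formidable expression of Proposition \ref{lemmascalare} collapses to a single rational function of one variable, and the inequality is then elementary.
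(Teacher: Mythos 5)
Your computation is correct and follows the same route as the paper: specialize Proposition \ref{lemmascalare} to $f(t)=-kt$, observe that $f''=f'''=0$ collapses $G_f$ to $k^2/(1+ku^2)^2$, and reduce to an elementary inequality in $s=ku^2$. The only (cosmetic) difference is the last step: you clear denominators to get $0<s(s^2+2s+10)$, whereas the paper keeps the form $\frac{1}{1+kt}\bigl(1+\tfrac{3}{2}t(\cdots)\bigr)$ and bounds it using $\frac{1}{1+kt}<1$ together with the negativity of the bracketed correction.
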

\begin{proof}
For this choice of $f$, at the point $(i,u)$ we have $$f'(t)=-k, \quad f''=f'''=0,\quad G_f(t)=\frac{k^2}{(1+kt^2)},\qquad t=u^2\neq 0 \ .$$ Thanks to Proposition \ref{lemmascalare} it follows that $$\normalfont{\text{scal}}(\g_f)_{(i,u)}=\frac{1}{1+kt}\bigg(1+\frac{3}{2}t\bigg(\frac{6tk^2}{(1+kt)^2}-\frac{6k}{1+kt}\bigg)\bigg) \ .$$Using that $\displaystyle\frac{1}{1+kt}<1$ for all $k>0$ and $t>0$, we obtain $$\normalfont{\text{scal}}(\g_f)_{(i,u)}<1+\frac{9tk}{1+kt}\bigg(\frac{tk}{1+kt}-1\bigg) \ .$$The last quantity is strictly less then $1$ since $$\frac{9tk}{1+kt}>0,\quad \frac{tk}{1+kt}-1<0,\quad \forall t,k>0 \ .$$
\end{proof}

\subsection{The isometry group}
At this point we have all the tools necessary to prove the last result of the paper. It is clear from Theorem \ref{teo:rungitamburelli} that any matrix in $\PSL(2,\R)$ and any rotation of the fiber generated by $S^1$, is an isometry of $\Hyp\times\C$ with respect to $\g_f$. Plus, the two actions commute. We will show that, whenever $f(t)=-kt$, with $k>0$, any other isometry $h$, isotopic to the identity, can be written as composition $h=P\circ e^{i\theta}$ for some $(P,e^{i\theta})\in\PSL(2,\R)\times S^1$. \\ \\ We recall the following standard result, the proof of which can be found in any text on Riemannian geometry. \begin{lemma}\label{lem:1jetisometry}
Let $h_1,h_2:(M_1,g_1)\to(M_2,g_2)$ be two isometries between Riemannian smooth connected manifolds. If there is a point $p\in M_1$ such that $h_1(p)=h_2(p)$ and $\mathrm d_ph_1\equiv\mathrm d_ph_2$, then $h_1\equiv h_2$.
\end{lemma}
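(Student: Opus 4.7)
The plan is to run a standard open-and-closed argument on the locus where $h_1$ and $h_2$ agree to first order, exploiting the fact that isometries of Riemannian manifolds are rigid because they intertwine the exponential maps. Concretely, define
\[
S := \bigl\{\, q \in M_1 \;\big|\; h_1(q) = h_2(q) \text{ and } \mathrm{d}_q h_1 = \mathrm{d}_q h_2 \,\bigr\}.
\]
The hypothesis gives $p \in S$, so $S$ is non-empty. Since $M_1$ is connected, it suffices to show $S$ is both open and closed; then $S = M_1$, which is the desired equality $h_1 \equiv h_2$ (the equality of differentials at every point is automatic once $h_1 = h_2$ pointwise, but it is built into $S$ for convenience of the induction).

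Closedness of $S$ is immediate from the continuity of $h_1,h_2$ and of their differentials: if $q_n \to q$ with $q_n \in S$, then $h_i(q_n) \to h_i(q)$ gives $h_1(q) = h_2(q)$, and continuity of $(q \mapsto \mathrm{d}_q h_i)$ as a section of the bundle of linear maps $T M_1 \to h_i^* T M_2$ yields $\mathrm{d}_q h_1 = \mathrm{d}_q h_2$. The heart of the argument is openness. Fix $q \in S$ and pick a normal neighborhood $U$ of $q$ obtained via $\exp_q$, small enough that the exponential map at $q$ and at $h_1(q) = h_2(q)$ are both diffeomorphisms onto their images. For any $v \in T_q M_1$ with $\exp_q(v) \in U$, the key identity
\[
h_i \bigl( \exp_q(v) \bigr) = \exp_{h_i(q)} \bigl( \mathrm{d}_q h_i (v) \bigr), \qquad i=1,2,
\]
holds because an isometry sends geodesics to geodesics with the corresponding initial velocities. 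Since $h_1(q)=h_2(q)$ and $\mathrm{d}_q h_1 = \mathrm{d}_q h_2$, the two right-hand sides coincide, so $h_1 = h_2$ on $U$. Because two smooth maps that coincide on an open set also have coinciding differentials there, we conclude $U \subset S$, proving openness.

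There is no real obstacle in this argument: the only delicate step is justifying the identity $h \circ \exp_q = \exp_{h(q)} \circ \,\mathrm{d}_q h$ for an isometry $h$, which follows from the standard fact that the Levi-Civita connection (and hence its geodesics) is preserved by isometries. Once that identity is in hand, the open-closed dichotomy together with connectedness of $M_1$ finishes the proof. Note that the statement and proof are purely Riemannian (positive definite); in the application of this lemma later in the paper one must therefore either work on a submanifold where the restriction of $\g_f$ is definite, or appeal to the analogous (and equally standard) rigidity result for pseudo-Riemannian isometries, whose proof is identical modulo the observation that the Levi-Civita connection is still well-defined in the non-degenerate pseudo-Riemannian setting.
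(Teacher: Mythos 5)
Your proof is correct and is precisely the standard argument the paper alludes to: the paper omits the proof, citing it as a textbook result, but its accompanying remark explicitly notes that only the exponential map and connectedness are used, which is exactly your open--closed argument via $h\circ\exp_q=\exp_{h(q)}\circ\,\mathrm d_qh$. Your closing observation about the pseudo-Riemannian extension also matches the paper's own remark.
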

\begin{remark}
It is not difficult to see that the proof of this lemma can also be extended to the pseudo-Riemannian case, since only the existence of the exponential map and the connectedness hypothesis on the manifolds are used.
\end{remark}

\begin{manualtheorem}B
Let $\Isom_0\big(\Hyp\times\C, \g_f\big)$ be the connected component of the identity of the isometry group $\Isom\big(\Hyp\times\C, \g_f\big)$. If $f(t)=-kt$, with $k>0$, then $\Isom_0\big(\Hyp\times\C, \g_f\big)\cong\PSL(2,\R)\times S^1$.
\end{manualtheorem}
\begin{proof}
First notice that each isometry $h\in\Isom_0\big(\Hyp\times\C, \g_f\big)$ preserves the copy of the hyperbolic plane $\Hyp\times\{0\}$. In fact, if there was an isometry $\widetilde{h}$ with $\widetilde{h}(z,0)=(z',w)$ for some $(z',w)\in\Hyp\times\C^*$, then we would get the following contradiction \begin{align*}1&=\scalg(\g_f)_{(z,0)} \\ &=\big(\widetilde h^*\scalg(\g_f)\big)_{(z,0)} \tag{$\widetilde h$ is an isometry} \\ &=\scalg(\g_f)_{(z',w)} \\ &<1 \ . \tag{Corollary \ref{corscalg}}\end{align*} Pick any $h\in\Isom_0\big(\Hyp\times\C,\g_f \big)$ such that $h(z,0)=(z',0)$ for some $z,z'\in\Hyp$. We can always assume that $h(i,0)=(i,0)$, indeed thanks to Remark \ref{matrixSL2} there exist two matrices $P,P'\in\SL(2,\R)$ such that $(z,0)=P\cdot (i,0), \ (z',0)=P'\cdot (i,0)$, hence the isometry $(P')^{-1}\circ h\circ P$ would fix the point $(i,0)$. If we consider the linear map $\mathrm d_{(i,0)} h:T_i\Hyp\times T_0\C\to T_i\Hyp\times T_0\C$ restricted to horizontal directions, we can again assume, up to pre- and post-composition with elements in $\PSL(2,\R)$ as before, that $\mathrm d_{(i,0)} h(Z,0)=(Z,0)$, for all $Z\in T_i\Hyp$. This implies that  \begin{equation*}\mathrm d_{(i,0)} h_{|_{T_i\Hyp}}=\Id_{T_i\Hyp} \ .\end{equation*} Now if $(0,U)\in T_i\Hyp\times T_0\C^*$ is a real vertical direction, then $\mathrm d_{(i,0)} h(0,U)=(0,W)$ for some $W\in T_0\C^*$. In particular, since $h$ is an isometry we get $$\vl\vl U \vl\vl^2_{\g_f}=\vl\vl W\vl\vl^2_{\g_f} \ ,$$ which implies that  $W=e^{i\theta}U$ for some $\theta \in\R$. Furthermore, since the circular action is an isometry for $\g_f$ that is trivial on the base $\mathbb{H}^{2}$, up to pre- and post-composing with rotations in the complex plane we have $$\mathrm d_{(i,0)} h(Z,U)=(Z,U)$$ for all $Z\in T_{i}\mathbb{H}^{2}$. Since $h$ is orientation preserving, we deduce that $d_{(i,0)}h=\Id$, since $h$ should also fix an imaginary vertical tangent vector. In the end, using Lemma \ref{lem:1jetisometry}, we obtain that $h=\Id$ on the whole $\Hyp\times\C$ after possibly pre- and post-composing $h$ by elements of $\PSL(2,\R)$ and rotations. Therefore, $h$ was of the form $h=P\circ e^{i\theta}$ for some $(P,e^{i\theta}) \in \PSL(2,\R) \times S^{1}$. 
\end{proof}
During the proof of the theorem we used that each isometry isotopic to the identity preserves the orientations on both $\Hyp$ and $\C$. There are other three possibilities for an arbitrary isometry $h\in\Isom\big(\Hyp\times\C,\g_f\big)$:\begin{itemize}
    \item [(1)] $h$ reverses the orientation on $\Hyp$ and preserves the orientation on $\C$. Then, by composing with $h_1(z,w):=(-\bar z,w)$ we get an isometry preserving both orientations. Hence, the proof of Theorem \ref{thmA} holds for $h\circ h_1$. \item[(2)] $h$ preserves the orientation on $\Hyp$ and reverses the orientation on $\C$. Then, by composing with $h_2(z,w)=(z,\bar w)$ we get an isometry preserving both orientations. Hence, we have the same conclusion as above for $h\circ h_2$. \item[(3)] Finally, $h$ reverses both the orientations. Then, the same argument applies to $h\circ h_1\circ h_2$.
\end{itemize}In the end, we proved the following 
\begin{manualcorollary}C If $f(t)=-kt$, with $k>0$, then any isometry $h:\big(\Hyp\times\C,\g_f\big)\to\big(\Hyp\times\C,\g_f\big)$ can be written as $$h=P\circ e^{i\theta},\quad h=P\circ e^{i\theta}\circ h_1,\quad h=P\circ e^{i\theta}\circ h_2,\quad h=P\circ e^{i\theta}\circ h_1\circ h_2$$ for some $P\in\PSL(2,\R)$ and $e^{i\theta}\in S^1$.
\end{manualcorollary}

%Consider the local symplectomorphisms $$\widetilde\psi_{\sigma_i}:\big(T^*U/_\Lambda|_{U_i},\widetilde\omega\big)\to\big(\pi^{-1}(U_i),\omega\big)$$and consider, for each pair of indices $i,j$ such that $U_i\cap U_j\neq\empty$, the symplectomorphisms $$$$

\emergencystretch=1em

\printbibliography

\end{document}